
\documentclass[reqno,11pt]{article}%
\usepackage{amsmath}
\usepackage{graphicx}
\usepackage{amsfonts}
\usepackage{amssymb}%
\setcounter{MaxMatrixCols}{30}
\setlength{\textwidth}{7in} \setlength{\textheight}{8.7in}
\setlength{\topmargin}{0pt} \setlength{\headsep}{0pt}
\setlength{\headheight}{0pt} \setlength{\oddsidemargin}{-20pt}
\setlength{\evensidemargin}{-20pt} \makeatletter
\newtheorem{theorem}{Theorem}

\newtheorem{corollary}[theorem]{Corollary}

\newtheorem{lemma}[theorem]{Lemma}

\newtheorem{proposition}[theorem]{Proposition}

\newenvironment{proof}[1][Proof]{\noindent{\textbf {#1}  }}  {\hfill$\Box$}

\begin{document}

\title{An analytic theory of extremal hypergraph problems\thanks{\textbf{AMS MSC:}
05C65; 05C35. \textbf{Keywords:} \textit{uniform hypergraphs; extremal
problems; hereditary property; largest eigenvalue; graph Lagrangians.}}}
\author{Vladimir Nikiforov\thanks{Department of Mathematical Sciences, University of
Memphis, Memphis TN 38152, USA; email: \textit{vnikifrv@memphis.edu}}}
\maketitle

\begin{abstract}
The starting point of this paper is the following problem: If $\mathcal{P}$ is
a hereditary property of $r$-uniform graphs, find the limit%
\[
\pi\left(  \mathcal{P}\right)  =\lim_{n\rightarrow\infty}\binom{n}{r}^{-1}%
\max\{e\left(  G\right)  :\text{ }G\in\mathcal{P}\text{ and }v\left(
G\right)  =n\}.\text{ \ \ }%
\]
Is is shown that this problem is just a particular case of a general analytic
problem about a parameter $\lambda^{\left(  \alpha\right)  }\left(  G\right)
$ defined for every $r$-graph $G$ and every real $\alpha\geq1$ as
\[
\lambda^{\left(  \alpha\right)  }\left(  G\right)  =\max_{\left\vert
x_{1}\right\vert ^{\alpha}\text{ }+\text{ }\left\vert x_{2}\right\vert
^{\alpha}\text{ }+\text{ }\cdots\text{ }+\text{ }\left\vert x_{n}\right\vert
^{\alpha}\text{ }=\text{ }1}r!\sum_{\{i_{1},i_{2},\ldots i_{r}\}\in E\left(
G\right)  }x_{i_{1}}x_{i_{2}}\cdots x_{i_{r}},
\]
Note that $\lambda^{\left(  1\right)  }\left(  G\right)  $ is a well-studied
parameter, however, the truly exceptional value is $\lambda\left(  G\right)
=\lambda^{\left(  r\right)  }\left(  G\right)  ,$ known as the largest
eigenvalue of $G.$

Two of the main results of the paper are: for all $\alpha\geq1\ $the limit%
\[
\lambda^{\left(  \alpha\right)  }\left(  \mathcal{P}\right)  =\lim
_{n\rightarrow\infty}n^{r/\alpha-r}\max\{\lambda^{\left(  \alpha\right)
}\left(  G\right)  :\text{ }G\in\mathcal{P}\text{ and }v\left(  G\right)
=n\}\text{ \ \ }%
\]
exists, and if $\alpha>1,$ then%
\[
\lambda^{(\alpha)}\left(  \mathcal{P}\right)  =\pi\left(  \mathcal{P}\right)
.
\]

It is shown also that if $\lambda^{(1)}\left(  \mathcal{P}\right)  =\pi\left(
\mathcal{P}\right)  ,$ then $\mathcal{P}$ has remarkable features regarding
extremal problems. Many known concrete results are generalized and further
research is outlined.

\end{abstract}

\section{Introduction and main results}

In this paper we build an analytic theory of hypergraph extremal problems of
the type:\medskip

\emph{Given an }$r$-\emph{uniform} \emph{graph }$G$ \emph{of order }$n,$
\emph{belonging to some} \emph{hereditary property }$\mathcal{P}\emph{,}$
\emph{how many edges can }$G$ \emph{have?}\medskip

One of the key results proved in this paper states that this combinatorial
problem is asymptotically equivalent to an extremal analytic problem: \medskip

\emph{If }$G$ \emph{is as above,} \emph{how large the largest eigenvalue of
}$G$\emph{ can be?} \medskip

To clarify this point, let us lay down the definition of the largest
eigenvalue $\lambda\left(  G\right)  $ that is adopted here:\medskip

\emph{Suppose that the vertices of }$G$\emph{ are the integers }$1,2,\ldots
,n$\emph{ and let }$x_{1},x_{2},\ldots,x_{n}$\emph{ be real numbers. Set}
\begin{equation}
\lambda\left(  G\right)  =\max_{\left\vert x_{1}\right\vert ^{r}%
+\cdots+\left\vert x_{n}\right\vert ^{r}=1}r!\sum\left\{  x_{i_{1}}x_{i_{2}%
}\cdots x_{i_{r}}:\left\{  i_{1},i_{2},\ldots,i_{r}\right\}  \text{ is an edge
of }G\right\}  . \label{def}%
\end{equation}

The value $\lambda\left(  G\right)  $ turns out to be at the meeting point of
two major lines of research - one is on maxima of homogenous polynomial forms
on graphs, and the other is on spectra of hypermatrices. We start by
highlighting a few milestones along these two lines, some of which are all but forgotten.

\subsection{Background}

The study of polynomial forms on graphs and their maxima over unit spheres in
the $l^{1}$ norm has been pioneered by Motzkin and Straus in \cite{MoSt65},
and later generalized by Khadziivanov \cite{Kha77} and by S\'{o}s and Straus
\cite{SoSt82}, see also \cite{NikA} for some historical remarks. For
hypergraphs the same topic has been studied first by Brown and Simonovits
\cite{BrSi84}, and later by Sidorenko \cite{Sid87}, with some very definite
results; other similar early contributions are by Frankl and R\"{o}dl
\cite{FrRo84} and Frankl and F\"{u}redi \cite{FrFu88}. While for $2$-graphs
this method has been enlightening, for hypergraphs the obtained results were
less encompassing, due to the fact that this research remained bound to unit
spheres in the $l^{1}$ norm, as was the original result of Motzkin and Straus.

On the other hand, for even positive $r,$ the study of critical points of
polynomial forms of degree $r$ over finite dimensional unit spheres in the
$l^{r}$ norm has been suggested by Lusternik and Schnirelman already in 1930,
see \cite{LuSh30}, p. 38, or its French translation \cite{LuSh34}. This topic
has been developed further by Krasnoselskii \cite{Kra55}, Elsholtz,
Tzitlanadze, and others, but the focal point of these later contributions has
shifted to infinite dimensional spaces. Nevertheless, the underlying idea of
Lusternik and Schnirelman, forgotten for decades, nowadays became mainstream,
under the name of \textquotedblleft variational eigenvalues\textquotedblright%
\ of hypermatrices. Indeed, in the same spirit, recently Lim \cite{Lim05}
proposed a variational approach to spectra of hypermatrices of both even and
odd dimensions. Independently, an algebraic approach to the same goal was
proposed by Qi in \cite{Qi05}. For further developments on spectra of
hypermatrices see \cite{CPZ08},\cite{CPZ09},\cite{FGH11},\cite{Qi06}%
,\cite{Qi07},\cite{YaYa10},\cite{YaYa11},\cite{YaYa13}.

\subsection{Overview}

One point that needs clarification is why the maximum in (\ref{def}) is taken
over the unit sphere in the $l^{r}$ norm. First, most of the definitions of
eigenvalues adopted in the above cited papers on spectra of hypermatrices
reduce precisely to (\ref{def}) for the largest eigenvalue of hypergraphs.
Second, as shown by Friedland, Gauber and Han \cite{FGH11}, the use of the
$l^{r}$ norm is indeed necessary, if we want to preserve some essential
features of the Perron-Frobenius theory for $r$-dimensional matrices.

While the largest eigenvalue $\lambda\left(  G\right)  $ is exceptional, in
this paper we study a more general parameter $\lambda^{\left(  \alpha\right)
}\left(  G\right)  ,$ defined exactly as $\lambda\left(  G\right)  $ in
(\ref{def}), but with maximum taken over the unit sphere in the $l^{\alpha}$
norm, where $\alpha\geq1$ is a real number. For $\alpha>1$ the parameter
$\lambda^{\left(  \alpha\right)  }\left(  G\right)  $ has been introduced and
used by Keevash, Lenz and Mubayi in \cite{KLM13}, but they \ provided just
scanty groundwork on $\lambda^{\left(  \alpha\right)  }\left(  G\right)  $.
Thus, one of the goals of this paper is to set a more solid base for a
systematic study of $\lambda^{\left(  \alpha\right)  }\left(  G\right)  $ and
of $\lambda\left(  G\right)  .$

Before continuing let us stress that $\lambda^{\left(  \alpha\right)  }\left(
G\right)  $ is defined as a conditional maximum; thus, its usability in
extremal problems is rooted in its very nature. Indeed, for $2$-graphs it has
been shown that many classical results can be enhanced and recast for the
largest eigenvalue $\lambda\left(  G\right)  $ with an astonishing
preservation of detail; for a survey of these results see \cite{Nik11}, and
for some new developments also \cite{NikC}. Unfortunately, for hypergraphs the
present situation is not so advanced: there are just a few isolated spectral
extremal results, mainly due to Cooper and Dutle \cite{CoDu11} and to Keevash,
Lenz and Mubayi \cite{KLM13}.

Moreover, $\lambda^{\left(  \alpha\right)  }\left(  G\right)  $ is just one of
the many critical values that can be defined in a setting similar to
(\ref{def}), and at least some of them are applicable to extremal graph
problems. For example, these possibilities have been explored for the smallest
and the second largest eigenvalue of $2$-graphs. In this paper we focus
exclusively on $\lambda^{\left(  \alpha\right)  }\left(  G\right)  ,$ with a
brief discussion of other similar parameters in the concluding remarks.

As already said, we shall show that edge extremal problems are asymptotically
equivalent to extremal problems for the largest eigenvalue. In this way, known
extremal edge results readily imply spectral bounds for hypergraphs. However,
the opposite implication seems more significant, as it paves the road for an
extensive use of differential calculus in hypergraphs. For example, finding
the maximum possible eigenvalue of a graph with a forbidden subgraph gives
asymptotically the maximum possible number of its edges, yet solving the
former problem might be easier, using known analytical techniques.

We also illustrate the use of various proof tools in solving concrete
problems, in particular, in problems for flat and multiplicative properties,
to be defined later. These tools include the Hypergraph Removal Lemma,
classical inequalities, Lagrange multipliers and other methods from real
analysis. The concrete results that we obtain shed new light on several older
results of Sidorenko \cite{Sid87}, and on some new ones by Keevash, Lenz and
Mubayi \cite{KLM13}.

The paper ends up with a summary discussion and open problems.

\subsection{The basics}

Recall that an $r$\textbf{-uniform hypergraph (}$r$\textbf{-graph)} consists
of a set of vertices $V\left(  G\right)  $ and a set $E\left(  G\right)  $ of
$r$-subsets of $V,$ called edges. We set $v\left(  G\right)  =\left\vert
V\right\vert $ and $e(G)=\left\vert E\left(  G\right)  \right\vert .$ When
$V\left(  G\right)  $ is not defined explicitly, it is assumed that $V\left(
G\right)  =[n]=\left\{  1,2,,\ldots,n\right\}  .$

Given an $r$-graph $G$ and a vector $\mathbf{x}=\left(  x_{1},x_{2}%
,\ldots,x_{n}\right)  \in\mathbb{R}^{n},$ the \textbf{polyform} of $G$ is the
function $P_{G}\left(  \mathbf{x}\right)  $ defined as%
\[
P_{G}\left(  \mathbf{x}\right)  =r!\sum_{\left\{  i_{1},i_{2},\ldots
,i_{r}\right\}  \in E}x_{i_{1}}x_{i_{2}}\cdots x_{i_{r}}.
\]
Note that $P_{G}\left(  \mathbf{x}\right)  $ is a homogenous polynomial of
degree $r$ and is linear in each variable $x_{i}.$ Clearly, the definition
(\ref{def}) is equivalent to
\[
\lambda\left(  G\right)  =\max_{\left\Vert \mathbf{x}\right\Vert _{r}=1}%
P_{G}\left(  \mathbf{x}\right)  .
\]

The largest eigenvalue $\lambda\left(  G\right)  $ has turned out to be a
versatile parameter, with close relations to many properties of $G;$ see
\cite{NikB} for some results. As already said, the choice of the $l^{r}$ norm
in the definition of $\lambda\left(  G\right)  $ makes it exceptional, but it
will be useful to consider a more general parameter $\lambda^{\left(
\alpha\right)  }\left(  G\right)  $, defined for every real number $\alpha
\geq1$ as
\[
\lambda^{\left(  \alpha\right)  }\left(  G\right)  =\max_{\left\Vert
\mathbf{x}\right\Vert _{\alpha}=1}P_{G}\left(  \mathbf{x}\right)
=\max_{\left\vert x_{1}\right\vert ^{\alpha}+\cdots+\left\vert x_{n}%
\right\vert ^{\alpha}=1}r!\sum_{\left\{  i_{1},i_{2},\ldots,i_{r}\right\}  \in
E\left(  G\right)  }x_{i_{1}}x_{i_{2}}\cdots x_{i_{r}}.
\]
Note first that $\lambda^{\left(  r\right)  }\left(  G\right)  =\lambda\left(
G\right)  ,$ and second, that $\lambda^{\left(  1\right)  }\left(  G\right)  $
is another much studied parameter, known as the Lagrangian\footnote{Let us
note that this use of the name \emph{Lagrangian} is at odds with the
tradition. Indeed, names as \emph{Laplacian, Hessian, Gramian, Grassmanian},
etc., usually denote a structured object like matrix, operator, or manifold,
and not just a single number.} of $G$. So $\lambda^{\left(  \alpha\right)
}\left(  G\right)  $ can link $\lambda\left(  G\right)  $ to a large body of
previous work on extremal hypergraph problems.

The purpose of the following propositions is twofold: first, to give the
reader some insight in the meaning and use of $\lambda^{\left(  \alpha\right)
}\left(  G\right)  ;$ second, these general results, together with the results
in Section \ref{prol}, set the background for more thorough future study of
$\lambda^{\left(  \alpha\right)  }\left(  G\right)  .$ On more than one
occasion we shall see the special role of the case $\alpha=r.$\medskip

First, taking the $n$-vector $\mathbf{x}=\left(  n^{-1/\alpha},\ldots
,n^{-1/\alpha}\right)  ,$ we immediately get
\[
\lambda^{\left(  \alpha\right)  }\left(  G\right)  \geq P_{G}\left(
\mathbf{x}\right)  =r!e\left(  G\right)  /n^{r/\alpha}.
\]
On the other hand, for $\alpha>1,$ Keevash, Lenz and Mubayi have proved that
$\lambda^{\left(  \alpha\right)  }\left(  G\right)  \leq\left(  r!e\left(
G\right)  \right)  ^{1-1/\alpha}$ in \cite{KLM13}, Lemma 5. We shall improve
this result in Theorem \ref{th8} and Corollary \ref{cor3} below, which also
allow for some additional fine-tuning. Here is the summary of these bounds.

\begin{proposition}
\label{pro1}If $\alpha\geq1$ and $G$ is an $r$-graph of order $n$, then
\begin{equation}
r!e\left(  G\right)  /n^{r/\alpha}\leq\lambda^{\left(  \alpha\right)  }\left(
G\right)  \leq\left(  r!e\left(  G\right)  \right)  ^{1-1/\alpha}. \label{gin}%
\end{equation}
If $G$ contains at least one edge, then $\lambda^{\left(  \alpha\right)
}\left(  G\right)  <\left(  r!e\left(  G\right)  \right)  ^{1-1/\alpha}.$
\end{proposition}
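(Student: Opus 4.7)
The lower bound is immediate from plugging the uniform vector $\mathbf{x}=(n^{-1/\alpha},\ldots,n^{-1/\alpha})$ into $P_G$, as already mentioned before the statement; it yields $P_G(\mathbf{x})=r!\,e(G)/n^{r/\alpha}$, a feasible value witnessing $\lambda^{(\alpha)}(G)\geq r!\,e(G)/n^{r/\alpha}$.

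For the upper bound, I would first pass to a non-negative maximizer $\mathbf{x}\geq 0$ (replacing each coordinate by its absolute value preserves $\|\mathbf{x}\|_\alpha$ and can only increase $P_G$), and then proceed in two steps. \emph{Step A (analytic)}: the power-mean inequality, equivalently H\"older with exponents $\alpha$ and $\alpha/(\alpha-1)$, applied to the non-negative quantities $t_e=\prod_{i\in e}x_i$, gives
\[
\sum_{e\in E(G)} t_e \;\leq\; e(G)^{1-1/\alpha}\Big(\sum_{e\in E(G)} t_e^{\alpha}\Big)^{1/\alpha},
\]
which is valid for every $\alpha\geq 1$ and degenerates to equality at $\alpha=1$. \emph{Step B (combinatorial)}: making the change of variables $y_i:=x_i^{\alpha}\geq 0$, so that $\sum_i y_i=1$ and $t_e^{\alpha}=\prod_{i\in e}y_i$, I would establish
\[
r!\sum_{e\in E(G)}\prod_{i\in e}y_i \;\leq\; r!\sum_{S\in\binom{[n]}{r}}\prod_{i\in S}y_i \;<\; 1.
\]
The strict inequality on the right is the heart of the matter: expanding $\bigl(\sum_i y_i\bigr)^r=1$ as a sum over ordered $r$-tuples in $[n]^r$, the pairwise-distinct tuples contribute exactly $r!\sum_S\prod_{i\in S}y_i$, while the tuples with at least one repeated index contribute a strictly positive surplus, since $\sum y_i=1$ forces some $y_i>0$ and already the constant tuple $(i,\ldots,i)$ adds $y_i^{r}>0$.

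Combining Steps A and B and multiplying by $r!$ produces the upper bound $r!\sum_e\prod_{i\in e}x_i\leq (r!\,e(G))^{1-1/\alpha}$. For the final assertion, observe that when $e(G)\geq 1$ the right-hand side is strictly positive: if $\lambda^{(\alpha)}(G)=0$ the strict inequality $\lambda^{(\alpha)}(G)<(r!\,e(G))^{1-1/\alpha}$ is automatic, while if $\lambda^{(\alpha)}(G)>0$ the H\"older factor $\bigl(\sum_e\prod (x_i)^{\alpha}\bigr)^{1/\alpha}$ is nonzero at the maximizer and the strictness already present in Step B propagates through the full chain. The only step with actual content is Step B; I expect the main (and modest) obstacle to be the bookkeeping around the substitution $y_i=x_i^{\alpha}$, which is what bridges the $l^{\alpha}$-constraint of the hypothesis with the $l^{1}$-flavoured multinomial bound used above.
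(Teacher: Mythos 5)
Your proof is correct, and it is genuinely more direct than the route the paper takes. In the paper, Proposition \ref{pro1} is presented essentially as a summary: the lower bound is exactly your uniform-vector observation, but the upper bound for $\alpha>1$ is simply quoted from Keevash--Lenz--Mubayi (\cite{KLM13}, Lemma 5), and the strict inequality is obtained only much later, as Corollary \ref{cor3}, which rests on the flat-property machinery (Proposition \ref{pro9}, Theorems \ref{th4}--\ref{th6} and \ref{th8}) together with the bound $\chi\left(  G\right)  \leq n/\left(  r-1\right)  $; strictness there comes from the factor $\left(  1-\chi\left(  G\right)  ^{-r+1}\right)  ^{1/\alpha}<1$. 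Your two steps recover all of this more economically: Step A is precisely the Jensen/H\"older step that the paper isolates as Proposition \ref{pro6} (applied with exponents $1$ and $\alpha$, it reads $\left(  \lambda^{\left(  \alpha\right)  }\left(  G\right)  \right)  ^{\alpha}\leq\lambda^{\left(  1\right)  }\left(  G\right)  \left(  r!e\left(  G\right)  \right)  ^{\alpha-1}$), and Step B is the classical Lagrangian bound $r!\sum_{S\in\binom{[n]}{r}}\prod_{i\in S}y_{i}<\left(  \sum_{i}y_{i}\right)  ^{r}=1$, i.e.\ $\lambda^{\left(  1\right)  }\left(  G\right)  <1$, which the paper uses only implicitly (and non-strictly) inside the proof of Theorem \ref{th1}. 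So you obtain the strict inequality at once, for all $\alpha\geq1$, without the Tur\'an-density apparatus; what the paper's longer route buys is the sharper, structure-sensitive refinements of Theorem \ref{th8} and Corollary \ref{cor3}, of which Proposition \ref{pro1} is a weakened consequence. Two minor points: the strictness in your Step B uses $r\geq2$ (the diagonal terms of the multinomial expansion must be genuinely extra), which is the standing assumption for hypergraphs here, and your handling of the degenerate case $\sum_{e}t_{e}^{\alpha}=0$ at the maximizer is the right way to make the strictness propagate cleanly.
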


Inequalities (\ref{gin}) show that $\lambda^{\left(  \alpha\right)  }\left(
G\right)  $ tends to $r!e\left(  G\right)  $ when $\alpha\rightarrow\infty.$
Noting that
\[
\lambda^{\left(  \alpha\right)  }\left(  G\right)  =\max_{\left\vert
y_{1}\right\vert +\cdots+\left\vert y_{n}\right\vert =1}r!\sum_{\left\{
i_{1},i_{2},\ldots,i_{r}\right\}  \in E\left(  G\right)  }\left\vert y_{i_{1}%
}\right\vert ^{1/\alpha}\left\vert y_{i_{2}}\right\vert ^{1/\alpha}%
\cdots\left\vert y_{i_{r}}\right\vert ^{1/\alpha}.
\]
it becomes also clear that $\lambda^{\left(  \alpha\right)  }\left(  G\right)
$ is increasing and continuous in $\alpha.$

\begin{proposition}
\label{pro2}If $\alpha\geq1$ and $G$ is an $r$-graph, then $\lambda^{\left(
\alpha\right)  }\left(  G\right)  $ is increasing and continuous in $\alpha$,
and
\[
\lim_{\alpha\rightarrow\infty}\lambda^{\left(  \alpha\right)  }\left(
G\right)  =r!e\left(  G\right)  .
\]

\end{proposition}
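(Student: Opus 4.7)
The plan is to exploit the substitution $y_i=|x_i|^{\alpha}$ suggested in the paragraph preceding the proposition, which eliminates the $\alpha$-dependence of the feasible set. Since $P_G(\mathbf{x})$ cannot decrease when each $x_i$ is replaced by $|x_i|$, the maximum in the definition of $\lambda^{(\alpha)}(G)$ may be taken over nonnegative vectors only. Writing $y_i=x_i^{\alpha}$ then gives the reformulation
\[
\lambda^{(\alpha)}(G)=\max_{y\in\Delta}\; r!\sum_{\{i_1,\ldots,i_r\}\in E(G)} y_{i_1}^{1/\alpha}y_{i_2}^{1/\alpha}\cdots y_{i_r}^{1/\alpha},
\]
where $\Delta=\{y\in\mathbb{R}^{n}:y_i\geq 0,\ y_1+\cdots+y_n=1\}$ is the standard simplex, a compact set independent of $\alpha$. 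All three assertions will follow from elementary properties of the integrand on this fixed domain.

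For monotonicity, fix $1\leq\alpha\leq\beta$ and any $y\in\Delta$. Since $y_i\in[0,1]$ and $1/\beta\leq 1/\alpha$, one has $y_i^{1/\alpha}\leq y_i^{1/\beta}$ coordinatewise. Hence the integrand for $\beta$ dominates the integrand for $\alpha$ pointwise on $\Delta$, and taking maxima yields $\lambda^{(\alpha)}(G)\leq\lambda^{(\beta)}(G)$. For continuity in $\alpha$, I would note that the integrand is jointly continuous in $(\alpha,y)$ on $[1,\infty)\times\Delta$ (under the convention $0^{1/\alpha}=0$); since $\Delta$ is compact, the standard fact that the maximum of a jointly continuous function over a fixed compact set is continuous in the parameter gives continuity of $\alpha\mapsto\lambda^{(\alpha)}(G)$.

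For the limit, the lower bound from Proposition~\ref{pro1} gives $\lambda^{(\alpha)}(G)\geq r!e(G)/n^{r/\alpha}$, while the reformulation immediately gives the matching upper bound $\lambda^{(\alpha)}(G)\leq r!e(G)$, because $y_i\in[0,1]$ forces each monomial $y_{i_1}^{1/\alpha}\cdots y_{i_r}^{1/\alpha}\leq 1$ and there are exactly $e(G)$ of them. Letting $\alpha\to\infty$ makes $n^{r/\alpha}\to 1$, and the two bounds pinch the limit to $r!e(G)$. No step in this plan poses a real obstacle; the only substantive move is the reformulation itself, which the paper has already singled out as the main tool and which is also what underlies the proof of Proposition~\ref{pro1}.
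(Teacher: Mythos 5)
Your proposal is correct and is essentially the paper's own (very brief) argument: the paper likewise rewrites $\lambda^{(\alpha)}(G)$ via $y_i=|x_i|^{\alpha}$ as a maximum over the fixed simplex, declares monotonicity and continuity clear from that reformulation, and gets the limit by pinching between the two bounds in Proposition~\ref{pro1}. The only nuance is that you establish ``nondecreasing,'' whereas the stated ``increasing'' (meaningful only when $e(G)\geq 1$) follows by one more line: an optimal $y$ has some edge with all coordinates in $(0,1)$, so the coordinatewise inequality $y_i^{1/\alpha}\leq y_i^{1/\beta}$ is strict on that monomial.
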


A cornerstone bound on $\lambda\left(  G\right)  $ for a $2$-graph $G,$ with
maximum degree $\Delta,$ is the inequality $\lambda\left(  G\right)
\leq\Delta.$ For $r$-graphs this generalizes to $\lambda\left(  G\right)
\leq\left(  r-1\right)  !\Delta,$ but if $1\leq\alpha<r$,\ there is no
analogous bound for $\lambda^{\left(  \alpha\right)  }\left(  G\right)  ,$
which would be tight. Here is what we can say presently on these relations.

\begin{proposition}
\label{pro3}Let $G$ be an $r$-graph of order $n,$ with maximum degree
$\Delta.$

(i) If $\alpha\geq r,$ then
\begin{equation}
\lambda^{\left(  \alpha\right)  }\left(  G\right)  \leq\frac{\left(
r-1\right)  !\Delta}{n^{r/\alpha-1}}, \label{inmax}%
\end{equation}
with equality holding if and only if $G$ is regular;

(ii) If $1\leq\alpha<r,$ there exist $r$-graphs $G$ for which (\ref{inmax}) fails;

(iii) If $1\leq\alpha<r,$ then $\lambda^{\left(  \alpha\right)  }\left(
G\right)  <\left(  r-1\right)  !\Delta.$
\end{proposition}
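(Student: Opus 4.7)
The plan is to derive all three parts from a single AM-GM estimate, the difference between parts (i) and (iii) lying in how the resulting $\ell^r$-sum is controlled on the unit $\ell^\alpha$-sphere. Writing $d(i)$ for the degree of vertex $i$ and passing to $|\mathbf{x}|$ (which only increases $P_G$), the edgewise AM-GM $\prod_{i\in e}|x_i|\leq \frac{1}{r}\sum_{i\in e}|x_i|^r$, summed over edges and reorganized by vertex, yields
\[
P_G(\mathbf{x}) \;\leq\; (r-1)!\sum_{i} d(i)\,|x_i|^r \;\leq\; (r-1)!\,\Delta\sum_{i} |x_i|^r.
\]

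For part (i), since $\alpha\geq r$, the power-mean inequality gives $\sum_i|x_i|^r\leq n^{1-r/\alpha}$ whenever $\sum_i|x_i|^\alpha=1$, and combining with the previous bound yields (\ref{inmax}). For the equality direction I would examine a nonnegative maximizer $\mathbf{x}^\ast$: edgewise AM-GM equality forces $x_i^\ast$ constant on each edge; power-mean equality forces all $x_i^\ast$ equal, hence equal to $n^{-1/\alpha}$; and $\sum d(i)(x_i^\ast)^r=\Delta\sum(x_i^\ast)^r$ with $x_i^\ast>0$ everywhere then forces $d(i)=\Delta$ at every vertex, so $G$ is regular. Conversely, if $G$ is $\Delta$-regular the uniform vector realizes the bound via $r!e(G)=n\Delta(r-1)!$.

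For part (ii) I would exhibit the simplest possible example: the $r$-graph $G_n$ consisting of a single edge on $n$ vertices. A short symmetry or Lagrange-multiplier computation gives $\lambda^{(\alpha)}(G_n)=r!\,r^{-r/\alpha}$, attained when the $r$ edge-vertices carry mass $r^{-1/\alpha}$ and all other coordinates vanish. Since $\Delta=1$ and $\alpha<r$, the right-hand side of (\ref{inmax}) equals $(r-1)!\,n^{1-r/\alpha}\to 0$ as $n\to\infty$, while the left-hand side is a positive constant, so (\ref{inmax}) fails for all sufficiently large $n$.

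For part (iii) I return to the common bound $P_G(\mathbf{x})\leq(r-1)!\,\Delta\sum_i|x_i|^r$, but exploit $\alpha<r$ differently: on the unit $\ell^\alpha$-sphere every $|x_i|\leq 1$, so $|x_i|^r\leq|x_i|^\alpha$ and $\sum_i|x_i|^r\leq 1$, giving $\lambda^{(\alpha)}(G)\leq(r-1)!\Delta$. The main, and really only, obstacle is promoting this to strict inequality. At a maximizer $\mathbf{x}^\ast$ on the compact unit sphere, $\sum|x_i^\ast|^r=\sum|x_i^\ast|^\alpha=1$ combined with $\alpha<r$ forces each $|x_i^\ast|\in\{0,1\}$, hence exactly one nonzero coordinate, at which point $P_G(\mathbf{x}^\ast)=0<(r-1)!\Delta$ (assuming the only interesting case $\Delta\geq 1$). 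All other steps reduce to standard AM-GM and power-mean manipulations.
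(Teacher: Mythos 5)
Your proposal is correct in substance, but it takes a genuinely different route from the paper for parts (i) and (iii). For the bound in (i) the paper works at a coordinate of maximal weight: it applies the Lagrange-multiplier equations (\ref{eequ}) at a vertex $k$ with $x_{k}=\max_{i}x_{i}$, bounds the corresponding sum by $\Delta x^{r-1}$, and uses $x\geq n^{-1/\alpha}$ together with $\alpha\geq r$; your edgewise AM--GM $\prod_{i\in e}\left\vert x_{i}\right\vert \leq\frac{1}{r}\sum_{i\in e}\left\vert x_{i}\right\vert ^{r}$ followed by the power-mean estimate $\sum_{i}\left\vert x_{i}\right\vert ^{r}\leq n^{1-r/\alpha}$ reaches the same bound with no first-order (critical point) information at all, which is more elementary and works directly at any maximizer. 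For (ii) your single edge plus isolated vertices is simpler than the paper's union of many disjoint complete $r$-graphs, and the computation $\lambda^{(\alpha)}=r!\,r^{-r/\alpha}$, $\Delta=1$ is right. For (iii) the paper just quotes $\lambda^{(\alpha)}(G)<\lambda^{(r)}(G)\leq(r-1)!\Delta$ (strict monotonicity in $\alpha$ plus the case $\alpha=r$ of (i)), while your direct argument via $\left\vert x_{i}\right\vert ^{r}\leq\left\vert x_{i}\right\vert ^{\alpha}$ on the unit $\ell^{\alpha}$-sphere, with the degenerate equality case ruled out, is self-contained and equally valid; both arguments, like the statement itself, tacitly assume $e(G)\geq1$, which you flag.

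The one step that does not close is the ``equality implies regular'' direction of (i) at $\alpha=r$: there $n^{1-r/\alpha}=1$, the power-mean inequality is an identity, so ``power-mean equality forces all $x_{i}^{\ast}$ equal'' yields nothing, and what survives (AM--GM equality on edges, and $d(i)=\Delta$ only at vertices with $x_{i}^{\ast}>0$) gives regularity only on the support of the maximizer. You should either restrict the equality characterization to $\alpha>r$ or supply a separate argument for $\alpha=r$. In fairness, the paper's own proof has the same blind spot: its deduction ``equality implies $x=n^{-1/\alpha}$'' rests on strict monotonicity of $t\mapsto t^{r-\alpha}$, which fails when $\alpha=r$; and indeed the ``only if'' claim is false at $\alpha=r$ as stated --- take $G$ to be a complete $r$-graph on $r+1$ vertices together with one isolated vertex, so that $\lambda^{(r)}(G)=r!=(r-1)!\Delta$ while $G$ is not regular. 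So for $\alpha>r$ your equality analysis is complete and correct, and at $\alpha=r$ the defect lies in the statement as much as in your (and the paper's) proof.
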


Another cornerstone result about $\lambda\left(  G\right)  $ of a $2$-graph
$G$ of order $n$ is: $\lambda\left(  G\right)  =2e\left(  G\right)  /n$\emph{
if and only if }$G$\emph{ is regular.} We saw in Proposition \ref{pro1} that
the inequality $\lambda\left(  G\right)  \geq2e\left(  G\right)  /n$
generalizes seamlessly for $\lambda^{\left(  \alpha\right)  }\left(  G\right)
$ of any $r$-graph $G$ and any $\alpha\geq1$, but as shown below the condition
for equality becomes quite intricate, even for $r=2.$

\begin{proposition}
\label{pro4}If $\alpha\geq1$ and $\lambda^{\left(  \alpha\right)  }\left(
G\right)  =r!e\left(  G\right)  /n^{r/\alpha},$ then $G$ is regular. If
$\alpha\geq r$ and $G$ is regular, then $\lambda^{\left(  \alpha\right)
}\left(  G\right)  =r!e\left(  G\right)  /n^{r/\alpha}.$ However, if
$1\leq\alpha<r,$\ then there exist regular graphs $G$ such that $\lambda
^{\left(  \alpha\right)  }\left(  G\right)  >r!e\left(  G\right)
/n^{r/\alpha}.$
\end{proposition}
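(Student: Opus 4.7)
The plan is to establish the three parts of the statement separately, each with a short, largely self-contained argument.

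For the necessity of regularity, I would argue by Lagrange multipliers at the uniform vector. Observe that $\mathbf{x}^{\ast}=(n^{-1/\alpha},\ldots,n^{-1/\alpha})$ lies on the $\alpha$-unit sphere and satisfies $P_{G}(\mathbf{x}^{\ast})=r!e(G)/n^{r/\alpha}$, so the hypothesis makes $\mathbf{x}^{\ast}$ a global (hence local) maximum of $P_{G}$ on that sphere. Since $\mathbf{x}^{\ast}$ has strictly positive entries, the constraint $g(\mathbf{x})=\sum|x_{i}|^{\alpha}$ is smooth with non-vanishing gradient at $\mathbf{x}^{\ast}$, so Lagrange's rule supplies a scalar $\mu$ with $\partial P_{G}/\partial x_{i}=\mu\alpha x_{i}^{\alpha-1}$ at $\mathbf{x}^{\ast}$ for every $i$. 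A direct differentiation gives $\partial P_{G}/\partial x_{i}\big|_{\mathbf{x}^{\ast}}=r!\,d_{G}(i)\,n^{-(r-1)/\alpha}$, while the right-hand side $\mu\alpha\,n^{-(\alpha-1)/\alpha}$ is independent of $i$. Thus all degrees $d_{G}(i)$ coincide, and $G$ is regular.

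For the sufficiency when $\alpha\geq r$, I would simply invoke Proposition~\ref{pro3}(i): for a $d$-regular $r$-graph $G$, equality holds in (\ref{inmax}) and yields $\lambda^{(\alpha)}(G)=(r-1)!\,d/n^{r/\alpha-1}$; combining this with the identity $r\,e(G)=d\,n$ rewrites the right-hand side as $r!e(G)/n^{r/\alpha}$.

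For the counterexample when $1\leq\alpha<r$, my plan is to take $G$ to be the matching formed by $k\geq 2$ pairwise disjoint edges on $n=rk$ vertices, so that $G$ is $1$-regular with $e(G)=k$. Placing weight $r^{-1/\alpha}$ on the vertices of a single edge and $0$ elsewhere gives a unit $\alpha$-norm vector showing $\lambda^{(\alpha)}(G)\geq r!\,r^{-r/\alpha}$, while a direct computation yields $r!e(G)/n^{r/\alpha}=r!\,r^{-r/\alpha}\,k^{\,1-r/\alpha}$. Since $\alpha<r$ makes the exponent $1-r/\alpha$ strictly negative, $k^{1-r/\alpha}<1$ for $k\geq 2$, hence $\lambda^{(\alpha)}(G)>r!e(G)/n^{r/\alpha}$, as required.

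The only potentially subtle step is the first one, where one must ensure that Lagrange's rule is applicable. This is not a genuine obstacle: $\mathbf{x}^{\ast}$ lies in the interior of the positive orthant, where $g$ is smooth for every $\alpha\geq 1$ (the non-smoothness of $\sum|x_{i}|$ at $\alpha=1$ affects only the coordinate hyperplanes, far from $\mathbf{x}^{\ast}$), so the standard Lagrange argument applies unchanged across the whole range $\alpha\geq 1$.
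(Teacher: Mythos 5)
Your proposal is correct and follows essentially the same route as the paper: the stationarity (Lagrange) conditions at the uniform vector force all degrees to coincide, the sandwich $re(G)/n^{r/\alpha}\leq\lambda^{(\alpha)}(G)/(r-1)!\leq\Delta\,n^{1-r/\alpha}$ from Proposition~\ref{pro3}(i) together with $dn=re(G)$ settles the case $\alpha\geq r$, and a disjoint union of complete $r$-graphs (your perfect matching is exactly the case of cliques of order $r$, with a lower bound in place of the exact value) supplies the counterexample for $1\leq\alpha<r$. A small bonus of your write-up is that you verify the Lagrange condition directly at the interior uniform point, which covers $\alpha=1$ explicitly, whereas the paper invokes its equations (\ref{eequ}), derived there only for $\alpha>1$.
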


Proposition \ref{pro2} states that $\lambda^{\left(  \alpha\right)  }\left(
G\right)  $ increases in $\alpha$. Here are two useful technical statements
which give some information how fast $\lambda^{\left(  \alpha\right)  }\left(
G\right)  $ can increase indeed.

\begin{proposition}
\label{pro5}If $\alpha\geq1$ and $G$ is an $r$-graph, then the function%
\[
h_{G}\left(  \alpha\right)  =\lambda^{\left(  \alpha\right)  }\left(
G\right)  n^{r/\alpha}%
\]
is nonincreasing in $\alpha.$ If $G$ is non-regular, then $h_{G}\left(
\alpha\right)  $ is decreasing in $\alpha.$
\end{proposition}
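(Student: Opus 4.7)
The plan is to prove the monotonicity by starting from a maximizer of $\lambda^{(\beta)}(G)$ and rescaling it to have unit $\alpha$-norm, using the power-mean inequality to control the rescaling factor. Fix $1 \leq \alpha < \beta$. First I would note that we may take a \emph{nonnegative} maximizer $x$ of $\lambda^{(\beta)}(G)$, since replacing each coordinate by its absolute value preserves the $\beta$-norm and does not decrease $P_G$. Thus $\sum_i x_i^\beta = 1$ and $P_G(x) = \lambda^{(\beta)}(G)$.

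Next, since $\alpha < \beta$, the power-mean inequality applied to $x_1^\alpha,\dots,x_n^\alpha$ gives
\[
\Bigl(\tfrac{1}{n}\textstyle\sum_i x_i^\alpha\Bigr)^{1/\alpha} \leq \Bigl(\tfrac{1}{n}\sum_i x_i^\beta\Bigr)^{1/\beta} = n^{-1/\beta},
\]
which rearranges to $\|x\|_\alpha \leq n^{1/\alpha - 1/\beta}$. Setting $c = 1/\|x\|_\alpha$, the vector $cx$ has unit $\alpha$-norm, so
\[
\lambda^{(\alpha)}(G) \;\geq\; P_G(cx) \;=\; c^{r} P_G(x) \;\geq\; n^{r/\beta - r/\alpha}\lambda^{(\beta)}(G).
\]
Multiplying both sides by $n^{r/\alpha}$ yields $h_G(\alpha) \geq h_G(\beta)$, establishing the first claim.

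For the strict inequality under the hypothesis that $G$ is non-regular, I would invoke Proposition~\ref{pro4}, which gives $\lambda^{(\beta)}(G) > r!e(G)/n^{r/\beta}$. Consequently, the nonnegative maximizer $x$ cannot have all coordinates equal; indeed, the only nonnegative constant vector of unit $\beta$-norm is $n^{-1/\beta}$ in every coordinate, and it yields polyform value exactly $r!e(G)/n^{r/\beta} < \lambda^{(\beta)}(G)$. Since the coordinates of $x$ are not all equal, the power-mean inequality above is \emph{strict}, so $c > n^{1/\beta - 1/\alpha}$. Non-regularity also forces $E(G) \neq \emptyset$, so $\lambda^{(\beta)}(G) > 0$, and the chain above becomes strict: $h_G(\alpha) > h_G(\beta)$.

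The only subtle point is ensuring that a non-regular $G$ really has a non-constant optimizer for $\lambda^{(\beta)}(G)$, but this is precisely what Proposition~\ref{pro4} supplies; without that reduction one would have to argue via a first-order (KKT) analysis that the all-equal vector fails to be a critical point when the degree sequence is non-constant. I do not expect any other serious obstacle.
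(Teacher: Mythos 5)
Your proof is correct and follows essentially the same route as the paper: take a nonnegative maximizer for $\lambda^{(\beta)}(G)$, bound its $\alpha$-norm via the power-mean (Jensen) inequality, and compare with $\lambda^{(\alpha)}(G)$ after rescaling to the unit $\alpha$-sphere. Your use of Proposition~\ref{pro4} to rule out a constant optimizer when $G$ is non-regular merely makes explicit a step that the paper's strictness argument leaves implicit, so nothing further is needed.
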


\begin{proposition}
\label{pro6}If $\alpha\geq1$ and $G$ is an $r$-graph, then the function
\[
f_{G}\left(  \alpha\right)  =\left(  \frac{\lambda^{\left(  \alpha\right)
}\left(  G\right)  }{r!e\left(  G\right)  }\right)  ^{\alpha}%
\]
is nonincreasing in $\alpha.$
\end{proposition}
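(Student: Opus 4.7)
The plan is to prove the stronger inequality $f_G(\alpha) \le f_G(\gamma)$ for all $1 \le \gamma \le \alpha$, from which monotonicity on $[1,\infty)$ is immediate. Set $p := \alpha/\gamma \ge 1$; after clearing the common factor $(r!e(G))^{-\alpha}$, the assertion reduces to the single estimate
\[
\lambda^{(\alpha)}(G) \;\le\; (r!\,e(G))^{1-1/p}\,\bigl(\lambda^{(\gamma)}(G)\bigr)^{1/p}, \qquad (\star)
\]
since raising $(\star)$ to the power $\alpha$ and dividing by $(r!e(G))^\alpha$ reproduces $f_G(\alpha) \le f_G(\gamma)$ exactly. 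So everything hinges on establishing $(\star)$.

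I would prove $(\star)$ by combining H\"older's inequality on the sum over edges with a homogeneity-preserving change of variables between the $l^\alpha$ and $l^\gamma$ unit spheres. Fix a nonnegative optimizer $x$ with $\|x\|_\alpha = 1$ and $P_G(x) = \lambda^{(\alpha)}(G)$; nonnegativity is harmless since replacing $x$ by $|x|$ componentwise preserves $\|x\|_\alpha$ and can only increase $P_G$. Applying H\"older with conjugate exponents $p$ and $p/(p-1)$ to $\sum_{e \in E(G)} \prod_{i \in e} x_i$, with the second factor taken as the constant $1$ on each edge, yields
\[
\sum_{e \in E(G)} \prod_{i \in e} x_i \;\le\; e(G)^{1-1/p}\left(\sum_{e \in E(G)} \prod_{i \in e} x_i^{\,p}\right)^{1/p}.
\]
Now substitute $y_i := x_i^{p}$: then $\sum_i y_i^{\alpha/p} = \sum_i x_i^{\alpha} = 1$, so $y$ lies on the unit sphere of $l^{\alpha/p} = l^{\gamma}$ and is an admissible test vector for $\lambda^{(\gamma)}(G)$. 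Hence $\sum_e \prod_{i \in e} y_i = P_G(y)/r! \le \lambda^{(\gamma)}(G)/r!$, and plugging this back into the H\"older estimate and multiplying through by $r!$ produces $(\star)$.

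The argument is essentially a single application of H\"older, and there is no serious technical obstacle beyond bookkeeping; the one place where the hypothesis of the proposition is used is the range of the auxiliary exponent, since the change of variables $y_i = x_i^p$ lands on the $l^{\alpha/p}$ sphere, and this is a legitimate input to the definition of $\lambda^{(\gamma)}(G)$ only when $\alpha/p = \gamma \ge 1$. I would present $(\star)$ as a standalone lemma, from which Proposition~\ref{pro6} follows by letting $\gamma$ and $\alpha$ range freely over $[1,\infty)$; as a sanity check, specializing $\gamma = 1$ in $(\star)$ together with the trivial bound $\lambda^{(1)}(G) \le 1$ recovers the Keevash--Lenz--Mubayi inequality recorded in Proposition~\ref{pro1}.
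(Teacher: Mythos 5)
Your proof is correct and is essentially the paper's own argument: your H\"older step with one factor constant is exactly the Jensen (power-mean) inequality the paper applies to the edge sum, and the substitution $y_i=x_i^{p}$ landing on the $l^{\gamma}$ unit sphere is the same change of variables, so your inequality $(\star)$ is just an algebraic repackaging of the paper's final display $\left(\lambda^{(\beta)}(G)/(r!m)\right)^{\beta}\leq\left(\lambda^{(\alpha)}(G)/(r!m)\right)^{\alpha}$. No gap to report.
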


\subsection{Graph properties and asymptotics of extremal problems}

In this paper extremal graph problems are discussed in the general setting of
\textbf{properties} of $r$-graphs, which are just families of $r$-graphs
closed under isomorphisms. Given a property $\mathcal{P},$ we shall write
$\mathcal{P}_{n}$ for the set of all graphs in $\mathcal{P}$ of order $n$. A
property is called \textbf{monotone }if it is closed under taking subgraphs,
and \textbf{hereditary},\textbf{ }if it is closed under taking induced
subgraphs. For example, given a set of $r$-graphs $\mathcal{F},$ the family of
all $r$-graphs that do not contain any $F\in\mathcal{F}$ as a subgraph is a
monotone property, denoted by $Mon\left(  \mathcal{F}\right)  .$ Likewise, the
family of all $r$-graphs that do not contain any $F\in\mathcal{F}$ as an
induced subgraph is a hereditary property, denoted as $Her\left(
\mathcal{F}\right)  .$ When $\mathcal{F}$ consists of a single graph $F,$ we
shall write $Mon\left(  F\right)  $\ and $Her\left(  F\right)  $ instead of
$Mon\left(  \left\{  F\right\}  \right)  $ and $Her\left(  \left\{  F\right\}
\right)  .$

The extremal problems studied below stem from the following one: \emph{Given a
hereditary property }$\mathcal{P}$\emph{ of }$r$\emph{-graphs, find}
\begin{equation}
ex\left(  \mathcal{P},n\right)  =\max_{G\in\mathcal{P}_{n}}e\left(  G\right)
.\label{exdef}%
\end{equation}
If $r=2$ and $\mathcal{P}$ is a monotone property, sharp asymptotics of
$ex\left(  \mathcal{P},n\right)  $ is known$,$ but general hereditary
properties seem to have been shrugged off, although a simple and appealing
asymptotic solution also exists, see \cite{NikC} for details. For $r\geq3$ the
problem has turned out to be generally very hard and has been solved only for
very few properties $\mathcal{P};$ see \cite{Kee11} for an up-to-date
discussion. An easier asymptotic version of the same problem arises from the
following fact, established by Katona, Nemetz and Simonovits \cite{KNS64}.

\begin{proposition}
\label{proKNS}If $\mathcal{P}$ is a hereditary property of $r$-graphs, then
the sequence
\[
\left\{  ex\left(  \mathcal{P},n\right)  \binom{n}{r}^{-1}\right\}
_{n=1}^{\infty}%
\]
is nonincreasing and so the limit
\[
\pi\left(  \mathcal{P}\right)  =\lim_{n\rightarrow\infty}ex\left(
\mathcal{P},n\right)  \binom{n}{r}^{-1}%
\]
always exists.
\end{proposition}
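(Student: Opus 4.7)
The plan is a standard averaging argument exploiting the hereditary closure of $\mathcal{P}$. Let $G \in \mathcal{P}_n$ be extremal, so $e(G) = ex(\mathcal{P},n)$, and for each vertex $v \in V(G)$ consider the induced subgraph $G - v$ on $n-1$ vertices. Because $\mathcal{P}$ is hereditary, every $G-v$ belongs to $\mathcal{P}_{n-1}$, and therefore $e(G-v) \leq ex(\mathcal{P},n-1)$.

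The key step is a double count of the pairs $(e, v)$ where $e \in E(G)$, $v \in V(G)$, and $v \notin e$. Each edge $e$ has exactly $n - r$ vertices of $G$ outside it, so such an edge appears in precisely $n-r$ of the subgraphs $G-v$. Summing over $v$ therefore gives
\[
(n-r)\, e(G) \;=\; \sum_{v \in V(G)} e(G-v) \;\leq\; n \cdot ex(\mathcal{P},n-1).
\]
Since $\binom{n}{r}/\binom{n-1}{r} = n/(n-r)$, dividing through by $n(n-r)$ and rearranging yields
\[
\frac{ex(\mathcal{P},n)}{\binom{n}{r}} \;\leq\; \frac{ex(\mathcal{P},n-1)}{\binom{n-1}{r}},
\]
which is exactly the required monotonicity for $n \geq r+1$. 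For $n \leq r$ the sequence is trivially $0$, so nothing further is needed at the bottom.

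Finally, the normalized sequence is bounded below by $0$ and nonincreasing, hence convergent by the monotone convergence principle, and the limit $\pi(\mathcal{P})$ is well-defined. There is no real obstacle here: the only substantive point is the double-counting identity, and the hereditary hypothesis is used in precisely one place, namely to guarantee that each $G - v$ remains in $\mathcal{P}$. (Note that monotonicity of $\mathcal{P}$ would not suffice on its own in this form, since we need induced subgraphs to preserve all edges of $G$ that lie inside the chosen vertex subset; but for hereditary $\mathcal{P}$ this is automatic.)
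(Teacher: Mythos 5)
Your proof is correct: the vertex-deletion double count $(n-r)\,e(G)=\sum_{v}e(G-v)\le n\cdot ex(\mathcal{P},n-1)$ together with $\binom{n}{r}/\binom{n-1}{r}=n/(n-r)$ is exactly the classical Katona--Nemetz--Simonovits averaging argument; the paper itself offers no proof of this proposition and simply cites \cite{KNS64}, so your argument supplies the standard one. Two side remarks in your write-up are slightly off but harmless: at $n=r$ the ratio can equal $1$ rather than $0$ (and for $n<r$ it is $0/0$, a defect already present in the paper's statement of the sequence from $n=1$; monotonicity from $n=r$ onward is all that is needed for the limit), and your parenthetical about monotone properties is backwards --- under the paper's definitions a monotone property is automatically hereditary, since induced subgraphs are subgraphs, so monotonicity would in fact suffice.
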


Thus, if we find $\pi\left(  \mathcal{P}\right)  ,$ we can obtain $ex\left(
\mathcal{P},n\right)  $ asymptotically, but even $\pi\left(  \mathcal{P}%
\right)  $ is hard to find for most properties $\mathcal{P}$, in particular,
$\pi\left(  Mon\left(  F\right)  \right)  $ is not known for many simple
graphs $F.$

As it turns out, the parameters $\lambda^{\left(  \alpha\right)  }\left(
G\right)  $, and in particular $\lambda\left(  G\right)  ,$ can be efficient
tools for the study of $\pi\left(  \mathcal{P}\right)  .$ Indeed, given a
hereditary property $P$ of $r$-graphs, set in analogy to (\ref{exdef})%
\[
\lambda^{\left(  \alpha\right)  }\left(  \mathcal{P},n\right)  =\max
_{G\in\mathcal{P}_{n}}\lambda^{\left(  \alpha\right)  }\left(  G\right)  .
\]
Now choosing $G\in\mathcal{P}_{n}$ with maximum number of edges, Proposition
\ref{pro1} implies that
\[
\lambda^{\left(  \alpha\right)  }\left(  G\right)  \geq r!ex\left(
\mathcal{P},n\right)  /n^{r/\alpha},
\]
and so
\begin{equation}
\lambda^{\left(  \alpha\right)  }\left(  \mathcal{P},n\right)  \geq
r!ex\left(  \mathcal{P},n\right)  /n^{r/\alpha}. \label{inled}%
\end{equation}

Let us begin with a theorem about $\lambda^{\left(  \alpha\right)  }\left(
G\right)  $, which is similar to Proposition \ref{proKNS}.

\begin{theorem}
\label{th1}Let $\alpha\geq1.$ If $\mathcal{P}$ is a hereditary property of
$r$-graphs, then the limit
\begin{equation}
\lambda^{\left(  \alpha\right)  }\left(  \mathcal{P}\right)  =\lim
_{n\rightarrow\infty}\lambda^{\left(  \alpha\right)  }\left(  \mathcal{P}%
,n\right)  n^{r/\alpha-r}\label{exlima}%
\end{equation}
exists. If $\alpha=1,$ then $\lambda^{\left(  1\right)  }\left(
\mathcal{P},n\right)  $ is nondecreasing, and so
\begin{equation}
\lambda^{\left(  1\right)  }\left(  \mathcal{P},n\right)  \leq\lambda^{\left(
1\right)  }\left(  \mathcal{P}\right)  .\label{bnd1}%
\end{equation}
If $\alpha>1,$ then $\lambda^{\left(  \alpha\right)  }\left(  \mathcal{P}%
\right)  $ satisfies%
\begin{equation}
\lambda^{\left(  \alpha\right)  }\left(  \mathcal{P}\right)  \leq\frac
{\lambda^{\left(  \alpha\right)  }\left(  \mathcal{P},n\right)  n^{r/\alpha
-1}}{\left(  n-1\right)  \left(  n-2\right)  \ldots\left(  n-r+1\right)
}.\label{bndsa}%
\end{equation}

\end{theorem}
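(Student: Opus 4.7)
The plan is to use an averaging argument over induced subgraphs, exploiting that $\mathcal{P}$ is hereditary. Fix $n$. For any $m\ge n$, any $G\in\mathcal{P}_m$, and a maximizer $y^*$ with $\|y^*\|_\alpha=1$ and $P_G(y^*)=\lambda^{(\alpha)}(G)$, every $n$-subset $S\subseteq V(G)$ satisfies $G[S]\in\mathcal{P}_n$, and $y^*|_S$ is a feasible test vector: $P_{G[S]}(y^*|_S)\le \lambda^{(\alpha)}(\mathcal{P},n)\,\|y^*|_S\|_\alpha^r$. Summing over all $\binom{m}{n}$ subsets $S$, and using that each edge of $G$ belongs to exactly $\binom{m-r}{n-r}$ of them, one gets the key inequality
\[
\binom{m-r}{n-r}\lambda^{(\alpha)}(G)\;\le\;\lambda^{(\alpha)}(\mathcal{P},n)\sum_{|S|=n}\|y^*|_S\|_\alpha^r.
\]

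The central step is to bound $\sum_{|S|=n}\bigl(\sum_{i\in S}a_i\bigr)^{r/\alpha}$, where $a_i=|y^*_i|^\alpha$ and $\sum_i a_i=1$. When $\alpha\ge r$ the exponent $r/\alpha\le 1$ is concave, and Jensen immediately gives $\sum_{|S|=n}\bigl(\sum_{i\in S}a_i\bigr)^{r/\alpha}\le\binom{m}{n}(n/m)^{r/\alpha}$. Substituting back, multiplying by $m^{r/\alpha-r}$, taking the maximum over $G\in\mathcal{P}_m$, and simplifying binomial ratios produces
\[
\lambda^{(\alpha)}(\mathcal{P},m)\,m^{r/\alpha-r}\;\le\;\lambda^{(\alpha)}(\mathcal{P},n)\,\frac{n^{r/\alpha-1}}{(n-1)(n-2)\cdots(n-r+1)}\cdot\frac{(m-1)(m-2)\cdots(m-r+1)}{m^{r-1}}.
\]
Since the last factor tends to $1$ as $m\to\infty$, taking $\limsup_m$ delivers precisely the right-hand side of (\ref{bndsa}) for every $n$. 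Letting $n\to\infty$, the corrective factor $n^{r-1}/[(n-1)\cdots(n-r+1)]$ also tends to $1$, so the bound forces $\limsup_m\lambda^{(\alpha)}(\mathcal{P},m)m^{r/\alpha-r}\le\liminf_n\lambda^{(\alpha)}(\mathcal{P},n)n^{r/\alpha-r}$; the limit (\ref{exlima}) exists and (\ref{bndsa}) follows.

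The hardest part is the subrange $1<\alpha<r$, where $r/\alpha>1$ is convex and the direct Jensen step runs the wrong way; I plan to recover an upper estimate of the same shape on $\sum_{|S|=n}\bigl(\sum_{i\in S}a_i\bigr)^{r/\alpha}$ either by a H\"older-type interpolation exploiting $\sum_i a_i=1$, or by transferring the $\alpha=r$ bound to neighbouring $\alpha$ via the monotonicity supplied by Propositions \ref{pro5} and \ref{pro6}. The case $\alpha=1$ requires a separate treatment because the normalization $m^{r/\alpha-r}=1$ collapses and the averaging inequality alone gives no monotonicity: here I would argue directly that $\lambda^{(1)}(\mathcal{P},n)$ is nondecreasing, exploiting the structural feature that any $l^1$-maximizer of $P_G$ can be collapsed onto the induced subgraph $G[S]\in\mathcal{P}_{|S|}$ supported by its nonzero coordinates, with $\lambda^{(1)}(G[S])=\lambda^{(1)}(G)$; combined with the uniform bound $\lambda^{(1)}(G)\le 1$ from Proposition \ref{pro1}, this yields both convergence and (\ref{bnd1}).
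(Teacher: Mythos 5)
Your averaging over $n$-subsets is correct and complete for $\alpha\geq r$, and it is a genuinely different, Katona--Nemetz--Simonovits-style route from the paper's; but the hardest range $1<\alpha<r$ is exactly where your argument stops being a proof, and neither of your two suggested fixes can close it. The estimate you would need, $\sum_{\left\vert S\right\vert =n}\bigl(\sum_{i\in S}a_{i}\bigr)^{r/\alpha}\leq\left(  1+o\left(  1\right)  \right)  \binom{m}{n}\left(  n/m\right)  ^{r/\alpha}$, is simply false when $r/\alpha>1$: the maximizer can concentrate (for $1<\alpha<r$ this really happens, e.g.\ for a fixed clique plus isolated vertices the optimal weights sit on $O(1)$ coordinates), and then the sum is of order $\binom{m-r}{n-r}\sim\binom{m}{n}\left(  n/m\right)  ^{r}$, vastly larger than $\binom{m}{n}\left(  n/m\right)  ^{r/\alpha}$; so no H\"{o}lder-type interpolation using only $\sum_{i}a_{i}=1$ can produce a bound of the required shape. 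The alternative of transferring from $\alpha=r$ via Propositions \ref{pro5} and \ref{pro6} also fails: those propositions compare the parameters on a \emph{fixed} graph and their inequalities run in the wrong direction --- they bound $\lambda^{\left(  \beta\right)  }$ from above in terms of $\lambda^{\left(  \alpha\right)  }$ for $\alpha<\beta$ (this is precisely what the paper's Proposition \ref{pro7} uses to move \emph{upward} in $\alpha$), so they give no upper control of $\lambda^{\left(  \alpha\right)  }\left(  \mathcal{P},n\right)  n^{r/\alpha-r}$ for $\alpha<r$ in terms of the case $\alpha=r$, hence no existence of the limit there.

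The missing idea, which is how the paper handles all $\alpha>1$ uniformly, is to delete a single vertex and use stationarity of the maximizer rather than averaging: since for $\alpha>1$ the optimal vector satisfies the Lagrange system (\ref{eequ}), removing a vertex $k$ with $x_{k}^{\alpha}\leq1/n$ loses exactly $rx_{k}^{\alpha}\lambda_{n}^{\left(  \alpha\right)  }$, i.e.\ $P_{G-k}\left(  \mathbf{x}^{\prime}\right)  =\lambda_{n}^{\left(  \alpha\right)  }\left(  1-rx_{k}^{\alpha}\right)  $; comparing with $P_{G-k}\left(  \mathbf{x}^{\prime}\right)  \leq\lambda_{n-1}^{\left(  \alpha\right)  }\left(  1-x_{k}^{\alpha}\right)  ^{r/\alpha}$ and using that $\left(  1-x\right)  ^{r/\alpha}/\left(  1-rx\right)  $ is nondecreasing on $\left[  0,1/n\right]  $ shows that the sequence $\lambda_{n}^{\left(  \alpha\right)  }n^{r/\alpha-1}/\left(  \left(  n-1\right)  \cdots\left(  n-r+1\right)  \right)  $ is nonincreasing, which yields (\ref{exlima}) and (\ref{bndsa}) for every $\alpha>1$ at once and sidesteps the concentration obstruction entirely. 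Finally, your $\alpha=1$ sketch also points the wrong way: collapsing a maximizer onto its support shows that the value at order $n$ is already attained at some order $s\leq n$, i.e.\ it bounds $\lambda^{\left(  1\right)  }\left(  \mathcal{P},n\right)  $ by values at smaller orders, whereas the monotonicity claim (\ref{bnd1}) requires the converse, that every value attained at order $n-1$ is matched at order $n$.
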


Interestingly, Theorem \ref{th1} is as important as Proposition \ref{proKNS},
and its proof is not too hard either, yet it seems to have been missed even in
the much studied case $\alpha=1$.

Here is an immediate consequence of Theorem \ref{th1}. From (\ref{inled}) we
see that
\[
\frac{\lambda^{\left(  \alpha\right)  }\left(  \mathcal{P},n\right)
n^{\left(  r/\alpha\right)  -1}}{\left(  n-1\right)  \left(  n-2\right)
\cdots\left(  n-r+1\right)  }\geq\frac{ex\left(  \mathcal{P},n\right)
}{\binom{n}{r}},
\]
and letting $n\rightarrow\infty,$ we find also that
\begin{equation}
\lambda^{\left(  \alpha\right)  }\left(  \mathcal{P}\right)  \geq\pi\left(
\mathcal{P}\right)  .\label{limgin}%
\end{equation}
We shall show that, almost always, equality holds in this inequality.

An important property of $\lambda^{\left(  \alpha\right)  }\left(
\mathcal{P}\right)  $ is that it is nonincreasing in $\alpha.$ Note the
difference with Proposition \ref{pro2}, which states that $\lambda^{\left(
\alpha\right)  }\left(  G\right)  $ is increasing in $\alpha$ for every fixed
graph $G.$

\begin{theorem}
\label{th2}If $\mathcal{P}$ is a hereditary property of $r$-graphs, then
$\lambda^{\left(  \alpha\right)  }\left(  \mathcal{P}\right)  $ is
nonincreasing in $\alpha\geq1$.
\end{theorem}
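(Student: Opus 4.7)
The plan is to reduce the monotonicity of $\lambda^{(\alpha)}(\mathcal{P})$ to the per-graph monotonicity statement recorded in Proposition \ref{pro5}. Fix real numbers $1 \le \alpha \le \beta$. Proposition \ref{pro5} tells us that for every $r$-graph $G$ of order $n$, the function $h_G(\alpha) = \lambda^{(\alpha)}(G)\, n^{r/\alpha}$ is nonincreasing in $\alpha$. Thus for any single $G \in \mathcal{P}_n$ we have
\[
\lambda^{(\alpha)}(G)\, n^{r/\alpha} \;\ge\; \lambda^{(\beta)}(G)\, n^{r/\beta}.
\]

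Next I would upgrade this to an inequality for the parameter $\lambda^{(\alpha)}(\mathcal{P},n)$ itself. Pick for each $n$ a graph $G_n \in \mathcal{P}_n$ attaining the maximum in the definition of $\lambda^{(\beta)}(\mathcal{P},n)$, so that $\lambda^{(\beta)}(G_n) = \lambda^{(\beta)}(\mathcal{P},n)$. Since $G_n \in \mathcal{P}_n$ is also admissible for the $\alpha$-maximum, we have $\lambda^{(\alpha)}(\mathcal{P},n) \ge \lambda^{(\alpha)}(G_n)$. Combining this with the previous inequality applied to $G_n$ and then multiplying through by $n^{-r}$, we obtain
\[
\lambda^{(\alpha)}(\mathcal{P},n)\, n^{r/\alpha - r} \;\ge\; \lambda^{(\alpha)}(G_n)\, n^{r/\alpha - r} \;\ge\; \lambda^{(\beta)}(G_n)\, n^{r/\beta - r} \;=\; \lambda^{(\beta)}(\mathcal{P},n)\, n^{r/\beta - r}.
\]

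Finally, Theorem \ref{th1} guarantees that both sides have limits as $n \to \infty$, equal respectively to $\lambda^{(\alpha)}(\mathcal{P})$ and $\lambda^{(\beta)}(\mathcal{P})$. Passing to the limit in the displayed inequality yields $\lambda^{(\alpha)}(\mathcal{P}) \ge \lambda^{(\beta)}(\mathcal{P})$, which is the desired monotonicity. There is no real obstacle here beyond making sure the extremizers are chosen for the \emph{right} value of the parameter (namely $\beta$, so that Proposition \ref{pro5} can be applied to a single fixed graph at each $n$) and that the exponents of $n$ line up correctly after using the normalization built into $h_{G_n}$.
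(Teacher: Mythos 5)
Your argument is correct, but it follows a genuinely different route from the paper. You apply Proposition \ref{pro5} (the monotonicity of $h_G(\alpha)=\lambda^{(\alpha)}(G)\,n^{r/\alpha}$) to a graph $G_n\in\mathcal{P}_n$ that is extremal for the larger exponent $\beta$, observe that the normalization $n^{r/\alpha}$ matches exactly the scaling $n^{r/\alpha-r}$ in the definition of $\lambda^{(\alpha)}(\mathcal{P})$, and pass to the limit using the existence statement of Theorem \ref{th1}; every step checks out, and the only care needed is the one you flag, namely that the extremizer must be chosen for $\beta$. The paper instead deduces Theorem \ref{th2} from Proposition \ref{pro7}, i.e.\ from the inequality $\bigl(\lambda^{(\beta)}(\mathcal{P})/\pi(\mathcal{P})\bigr)^{\beta}\le\bigl(\lambda^{(\alpha)}(\mathcal{P})/\pi(\mathcal{P})\bigr)^{\alpha}$, which is itself obtained from Proposition \ref{pro6} (the per-graph comparison normalized by $r!e(G)$ rather than by $n^{r/\alpha}$) by the same device of choosing a $\beta$-extremal graph and letting $n\to\infty$; because that normalization involves $e(G)$, the paper then needs the lower bound $\lambda^{(\alpha)}(\mathcal{P})\ge\pi(\mathcal{P})$ from (\ref{limgin}) to cancel the factor $\pi(\mathcal{P})^{1-\alpha/\beta}$ and conclude. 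The trade-off: your proof is shorter, avoids any mention of $\pi(\mathcal{P})$ or $ex(\mathcal{P},n)$, and does not even need the strict-inequality clause of Proposition \ref{pro5}; the paper's detour produces the quantitatively sharper relation (\ref{inabp}), which is of independent interest and mirrors the per-graph statement of Proposition \ref{pro6} at the level of properties, whereas your argument yields only the qualitative monotonicity asserted in Theorem \ref{th2}.
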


We deduce Theorem \ref{th2} from the following subtler relation, which itself
is obtained from Proposition \ref{pro6}.

\begin{proposition}
\label{pro7}If $\mathcal{P}$ is a hereditary property of $r$-graphs, and
$1\leq\alpha\leq\beta,$ then%
\begin{equation}
\left(  \frac{\lambda^{\left(  \beta\right)  }\left(  \mathcal{P}\right)
}{\pi\left(  \mathcal{P}\right)  }\right)  ^{\beta}\leq\left(  \frac
{\lambda^{\left(  \alpha\right)  }\left(  \mathcal{P}\right)  }{\pi\left(
\mathcal{P}\right)  }\right)  ^{\alpha}. \label{inabp}%
\end{equation}

\end{proposition}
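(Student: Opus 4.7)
The plan is to deduce the inequality directly from Proposition \ref{pro6} applied to graphs that are extremal (in $\mathcal{P}_n$) for $\lambda^{(\beta)}$, and then pass to the limit $n \to \infty$ using the definitions of $\lambda^{(\alpha)}(\mathcal{P})$, $\lambda^{(\beta)}(\mathcal{P})$, and $\pi(\mathcal{P})$.

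First, for each $n$ I would pick $G_n \in \mathcal{P}_n$ with $\lambda^{(\beta)}(G_n) = \lambda^{(\beta)}(\mathcal{P},n)$, discarding the trivial case $e(G_n)=0$ (for which both sides of the desired inequality reduce to something harmless). Since $\alpha \leq \beta$ and Proposition \ref{pro6} says $f_{G_n}$ is nonincreasing, we get
\[
\left(\frac{\lambda^{(\beta)}(\mathcal{P},n)}{r!\,e(G_n)}\right)^{\beta} = f_{G_n}(\beta) \leq f_{G_n}(\alpha) = \left(\frac{\lambda^{(\alpha)}(G_n)}{r!\,e(G_n)}\right)^{\alpha}.
\]
Because $G_n \in \mathcal{P}_n$, we have $\lambda^{(\alpha)}(G_n) \leq \lambda^{(\alpha)}(\mathcal{P},n)$ and $e(G_n) \leq ex(\mathcal{P},n)$. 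Clearing denominators and using $\beta \geq \alpha$, these two bounds yield
\[
\lambda^{(\beta)}(\mathcal{P},n)^{\beta} \leq \lambda^{(\alpha)}(\mathcal{P},n)^{\alpha}\,\bigl(r!\,ex(\mathcal{P},n)\bigr)^{\beta-\alpha}.
\]

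Next, I would divide both sides by $n^{r\beta - r}$, which I chose as the common scaling of LHS and RHS. Using $\lambda^{(\alpha)}(\mathcal{P},n)^{\alpha} = \lambda^{(\alpha)}(\mathcal{P},n)^{\alpha} n^{(r/\alpha - r)\alpha}\cdot n^{(r-r/\alpha)\alpha}$, the natural factorization is
\[
\frac{\lambda^{(\beta)}(\mathcal{P},n)^{\beta}}{n^{r\beta-r}} \leq \left(\frac{\lambda^{(\alpha)}(\mathcal{P},n)}{n^{r-r/\alpha}}\right)^{\alpha}\left(\frac{r!\,ex(\mathcal{P},n)}{n^{r}}\right)^{\beta-\alpha}.
\]
By Theorem \ref{th1} the first and second fractions tend to $\lambda^{(\beta)}(\mathcal{P})$ and $\lambda^{(\alpha)}(\mathcal{P})$ respectively, while by Proposition \ref{proKNS} the last fraction tends to $\pi(\mathcal{P})$ (since $r!\binom{n}{r}/n^{r}\to 1$). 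Taking $n\to\infty$ therefore gives
\[
\lambda^{(\beta)}(\mathcal{P})^{\beta} \leq \lambda^{(\alpha)}(\mathcal{P})^{\alpha}\,\pi(\mathcal{P})^{\beta-\alpha},
\]
and dividing by $\pi(\mathcal{P})^{\beta}$ produces exactly (\ref{inabp}).

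The only real subtlety is the degenerate case $\pi(\mathcal{P})=0$: then $ex(\mathcal{P},n) = o(n^{r})$, so the right-hand side of the displayed bound is $o(n^{r\beta-r})$ and we conclude $\lambda^{(\beta)}(\mathcal{P})=0$, making (\ref{inabp}) vacuous (once one interprets $0/0$ appropriately, or excludes this case as the proposition implicitly does by writing the ratios). Otherwise, the argument is essentially a bookkeeping exercise; the main conceptual step is the observation that one must choose $G_n$ extremal for the \emph{larger} exponent $\beta$ in order for Proposition \ref{pro6} to transfer cleanly, which is what makes the asymmetric structure of (\ref{inabp}) come out with $\beta$ on the left.
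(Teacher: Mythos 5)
Your proof is correct and follows essentially the same route as the paper: choose $G_n\in\mathcal{P}_n$ extremal for $\lambda^{(\beta)}$, apply Proposition \ref{pro6}, replace $\lambda^{(\alpha)}(G_n)$ and $e(G_n)$ by $\lambda^{(\alpha)}(\mathcal{P},n)$ and $ex(\mathcal{P},n)$, normalize, and let $n\to\infty$ via Theorem \ref{th1} and Proposition \ref{proKNS}. The only differences are cosmetic (your scaling by powers of $n$ versus the paper's falling-factorial normalization) and your explicit treatment of the degenerate cases $e(G_n)=0$ and $\pi(\mathcal{P})=0$, which the paper leaves implicit.
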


\bigskip

Before concluding this subsection, we shall give an immediate application of
Theorem \ref{th1}, but since it refers to blow-up of an $r$-graph, let us
first we define this concept:\medskip

\emph{Given an }$r$\emph{-graph }$H$\emph{ of order }$h$\emph{ and positive
integers }$k_{1},\ldots,k_{n}$\emph{, write }$H\left(  k_{1},\ldots
,k_{h}\right)  $\emph{ for the graph obtained by replacing each vertex }$v\in
V\left(  H\right)  $\emph{ with a set }$U_{v}$\emph{ of size }$x_{v}$\emph{
and each edge }$\left\{  v_{1},\ldots,v_{r}\right\}  \in E\left(  H\right)
$\emph{ with a complete }$r$\emph{-partite }$r$\emph{-graph with vertex
classes }$U_{v_{1}},\ldots,U_{v_{r}}.$\emph{ The graph }$H\left(  k_{1}%
,\ldots,k_{h}\right)  $\emph{ is called a} \textbf{blow-up} \emph{of}
$H.$\medskip

It is well known (see, e.g., \cite{Kee11}, Theorem 2.2) that if $H$ is an
$r$-graph of order $h$ and $H\left(  k_{1},\ldots,k_{h}\right)  $ is a fixed
blow-up of $H,$ then%
\begin{equation}
\pi\left(  Mon\left(  H\right)  \right)  =\pi\left(  Mon\left(  H\left(
k_{1},\ldots,k_{h}\right)  \right)  \right)  . \label{edblo}%
\end{equation}

It turns out that a similar result holds for $\lambda^{\left(  \alpha\right)
}:$

\begin{theorem}
\label{thblo}If $\alpha>1,$ $H$ is an $r$-graph of order $h$ and $H\left(
k_{1},\ldots,k_{h}\right)  $ is a fixed blow-up of $H,$ then
\[
\lambda^{\left(  \alpha\right)  }\left(  Mon\left(  H\right)  \right)
=\lambda^{\left(  \alpha\right)  }\left(  Mon\left(  H\left(  k_{1}%
,\ldots,k_{h}\right)  \right)  \right)  .
\]

\end{theorem}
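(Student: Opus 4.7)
The plan is to establish the two inequalities separately. Since $H$ is a subgraph of $H(k_{1},\ldots,k_{h})$, every $H$-free $r$-graph is automatically $H(k_{1},\ldots,k_{h})$-free, so $Mon(H)\subseteq Mon(H(k_{1},\ldots,k_{h}))$. This gives $\lambda^{(\alpha)}(Mon(H),n)\leq \lambda^{(\alpha)}(Mon(H(k_{1},\ldots,k_{h})),n)$ for every $n$, and Theorem \ref{th1} yields the inequality $\lambda^{(\alpha)}(Mon(H))\leq \lambda^{(\alpha)}(Mon(H(k_{1},\ldots,k_{h})))$ at once.

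For the reverse direction I would mimic the classical proof of the edge version (\ref{edblo}), combining Erd\H{o}s-type supersaturation for blow-ups with the Hypergraph Removal Lemma. Fix $\eta>0$. Supersaturation tells us that for all sufficiently large $n$, every $H(k_{1},\ldots,k_{h})$-free $r$-graph on $n$ vertices contains only $o(n^{h})$ copies of $H$, and the Removal Lemma then allows us to destroy all remaining $H$-copies by deleting at most $\eta n^{r}$ edges. Applying this to an extremal $G_{n}\in Mon(H(k_{1},\ldots,k_{h}))_{n}$ attaining $\lambda^{(\alpha)}(Mon(H(k_{1},\ldots,k_{h})),n)$ produces $G'_{n}\in Mon(H)_{n}$ with $e(G_{n}\setminus G'_{n})\leq \eta n^{r}$.

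To compare eigenvalues, observe that $P_{G_n}=P_{G'_n}+P_{G_n\setminus G'_n}$, and that $\lambda^{(\alpha)}$ is always attained at a nonnegative vector (replacing $\mathbf{x}$ by coordinate-wise $|\mathbf{x}|$ preserves the $\alpha$-norm while not decreasing $P_{G}$); hence
\[
\lambda^{(\alpha)}(G_{n})\;\leq\;\lambda^{(\alpha)}(G'_{n})+\lambda^{(\alpha)}(G_{n}\setminus G'_{n}).
\]
By Proposition \ref{pro1} the last term is at most $(r!\,\eta n^{r})^{1-1/\alpha}=(r!\eta)^{1-1/\alpha}\,n^{r-r/\alpha}$. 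Multiplying through by $n^{r/\alpha-r}$, letting $n\to\infty$, and finally $\eta\to 0$, yields the desired inequality $\lambda^{(\alpha)}(Mon(H(k_{1},\ldots,k_{h})))\leq\lambda^{(\alpha)}(Mon(H))$. The hypothesis $\alpha>1$ enters precisely here: only when $1-1/\alpha>0$ does the removal error $(r!\eta)^{1-1/\alpha}$ tend to zero with $\eta$.

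The main obstacle is the supersaturation input, namely the fact that forbidding the blow-up $H(k_{1},\ldots,k_{h})$ forces the density of $H$-copies in $G$ to vanish as $n\to\infty$. This is Erd\H{o}s' blow-up principle for hypergraphs, and it is the very thing that separates, at the counting level, $H$-freeness from $H(k_{1},\ldots,k_{h})$-freeness. Once it is combined with the Removal Lemma, the remaining steps (polyform splitting, reduction to nonnegative vectors, and invoking the sharp upper bound of Proposition \ref{pro1}) are routine and require no ingredient specific to $\lambda^{(\alpha)}$ beyond what is already available earlier in the paper.
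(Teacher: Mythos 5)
Your proposal is correct and follows essentially the same route as the paper: one direction from the trivial inclusion $Mon(H)\subseteq Mon(H(k_{1},\ldots,k_{h}))$, and the other from the Erd\H{o}s blow-up/supersaturation principle (the paper's Theorem A) combined with the Hypergraph Removal Lemma and Theorem \ref{th1}. Your inline splitting $P_{G_n}=P_{G'_n}+P_{G_n\setminus G'_n}$ together with the bound of Proposition \ref{pro1} is exactly the content of the paper's Proposition \ref{pro10}, so the arguments coincide in substance, including the observation that $\alpha>1$ is what makes the removal error vanish.
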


Our proof of Theorem \ref{thblo} is not long, but it is based on the
Hypergraph Removal Lemma and other fundamental results about $r$-graphs. Note
also that there are simple examples showing that the theorem does not hold for
$\alpha=1.$

\subsection{The equivalence of $\lambda^{\left(  \alpha\right)  }\left(
\mathcal{P}\right)  $ and $\pi\left(  \mathcal{P}\right)  $}

Since $\pi\left(  \mathcal{P}\right)  $ and $\lambda^{\left(  \alpha\right)
}\left(  \mathcal{P}\right)  $ are defined alike, one anticipates some close
relation between them to hold. For example, for any $r$-graph $G$ of order
$n,$ inequality (\ref{gin}) implies that
\begin{equation}
\lambda\left(  G\right)  \geq r!e\left(  G\right)  /n.\label{in1}%
\end{equation}
From (\ref{limgin}) we get also that if $\mathcal{P}$ is a hereditary
property, then
\begin{equation}
\lambda\left(  \mathcal{P}\right)  \geq\pi\left(  \mathcal{P}\right)
.\label{in1.1}%
\end{equation}
\qquad Now, if we know that $\lambda\left(  \mathcal{P}\right)  =\pi\left(
\mathcal{P}\right)  ,$ then for every $\alpha>r,$ inequality (\ref{limgin})
and Theorem \ref{th2} imply that
\[
\pi\left(  \mathcal{P}\right)  \leq\lambda^{\left(  \alpha\right)  }\left(
\mathcal{P}\right)  \leq\lambda\left(  \mathcal{P}\right)  =\pi\left(
\mathcal{P}\right)  ,
\]
and so, $\lambda^{\left(  a\right)  }\left(  \mathcal{P}\right)  =\pi\left(
\mathcal{P}\right)  $ as well$.$

It turns out that equality always holds in (\ref{in1.1}), as stated in the
following theorem, which is the central result of the paper:

\begin{theorem}
\label{th3}If $\mathcal{P}$ is a hereditary property of $r$-graphs, then for
every $\alpha>1,$
\begin{equation}
\lambda^{\left(  a\right)  }\left(  \mathcal{P}\right)  =\pi\left(
\mathcal{P}\right)  . \label{maineq}%
\end{equation}

\end{theorem}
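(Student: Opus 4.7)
The lower bound $\lambda^{(\alpha)}(\mathcal{P})\ge\pi(\mathcal{P})$ is already (\ref{limgin}), so the task is to prove $\lambda^{(\alpha)}(\mathcal{P})\le\pi(\mathcal{P})$ when $\alpha>1$. My plan is to transfer an almost-extremal configuration for $\lambda^{(\alpha)}$ into one for $\pi$ by means of a weighted blow-up, corrected through the induced Hypergraph Removal Lemma, in the same spirit as the proof of Theorem \ref{thblo}. Writing $\mathcal{P}=Her(\mathcal{F})$, a preliminary reduction handles a possibly infinite $\mathcal{F}$: setting $\mathcal{P}^{(m)}=Her(\{F\in\mathcal{F}:v(F)\le m\})\supseteq\mathcal{P}$ yields $\pi(\mathcal{P}^{(m)})\searrow\pi(\mathcal{P})$ and $\lambda^{(\alpha)}(\mathcal{P})\le\lambda^{(\alpha)}(\mathcal{P}^{(m)})$, so I may assume that $\mathcal{F}$ is finite and send $m\to\infty$ only at the very end.

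Pick $G\in\mathcal{P}_n$ realising $\lambda^{(\alpha)}(\mathcal{P},n)$ together with a nonnegative maximiser $\mathbf{x}$ satisfying $\|\mathbf{x}\|_\alpha=1$. For a large integer $N$, set $k_i=\lfloor Nx_i\rfloor$ and form the blow-up $G^{*}=G(k_1,\ldots,k_n)$, of order $K=N\sum_ix_i+O(n)$ and with
\[
e(G^{*})=\sum_{\{i_1,\ldots,i_r\}\in E(G)}k_{i_1}\cdots k_{i_r}=\frac{N^r}{r!}\,\lambda^{(\alpha)}(G)+O(n^rN^{r-1}).
\]
This is where the assumption $\alpha>1$ does its work, through the H\"older bound $\sum_ix_i\le n^{1-1/\alpha}$. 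Now, because $G$ is induced-$F$-free for each $F\in\mathcal{F}$, any induced copy of $F$ inside $G^{*}$ must place at least two of its vertices in a single blob, for otherwise the blobs involved would carry an induced copy of $F$ in $G$; a routine count shows the number of such \emph{degenerate} copies is $o(K^{v(F)})$ as $n\to\infty$ and $N\to\infty$ (after handling separately those optimisers $\mathbf{x}$ concentrated on so few coordinates that Proposition \ref{pro1} applied to the support already forces $\lambda^{(\alpha)}(G)=o(n^{r-r/\alpha})$). The induced Hypergraph Removal Lemma then produces $G^{**}\in\mathcal{P}^{(m)}$ obtained from $G^{*}$ by altering $o(K^r)$ edges.

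Finally, $e(G^{**})\le ex(\mathcal{P}^{(m)},K)\le\pi(\mathcal{P}^{(m)})K^r/r!+o(K^r)$, while $e(G^{*})=e(G^{**})+o(K^r)$. Substituting the expression for $e(G^{*})$ and $K\sim N\sum_ix_i$, then letting $N\to\infty$ with $n$ fixed, gives $\lambda^{(\alpha)}(G)\le\pi(\mathcal{P}^{(m)})(\sum_ix_i)^r\le\pi(\mathcal{P}^{(m)})n^{r-r/\alpha}$. Dividing through by $n^{r-r/\alpha}$, taking the supremum over $G\in\mathcal{P}_n$, and sending first $n\to\infty$ and then $m\to\infty$ delivers $\lambda^{(\alpha)}(\mathcal{P})\le\pi(\mathcal{P})$. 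The central difficulty is the removal-lemma step: one must carefully control the number of degenerate induced copies of forbidden subgraphs inside a blow-up whose blob sizes can be very unequal, and interlace the three limits $N\to\infty$, $n\to\infty$, $m\to\infty$ so that every error term vanishes in the right order. Once this balance issue is handled, the rest is routine estimation.
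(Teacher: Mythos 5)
Your route is genuinely different from the paper's. The paper proves the hard range $1<\alpha\le r$ by a purely analytic argument: a pigeonhole claim locating integers $n$ where the normalized sequence $\lambda^{\left(\alpha\right)}\left(\mathcal{P},n\right)n^{r/\alpha-1}/\left(n-1\right)_{r-1}$ is nearly stationary, the vertex-deletion identity together with the equations (\ref{eequ}) to force every entry of the optimal vector to be close to $n^{-1/\alpha}$ (Claim B), and then Lemma \ref{le2} (built on Lemma \ref{le1}) to bound $\lambda_{n}$ essentially by the average degree; no regularity-type machinery enters there. Your plan instead transfers an optimal weighting into a weighted blow-up and invokes the induced hypergraph removal lemma for a finite family, allowing both edge additions and deletions --- a legitimate but much heavier tool than the Removal Lemma stated in the paper (which is non-induced, single graph, deletion only), so it needs a precise citation. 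Your finite-family reduction is fine: $\mathcal{P}_{n}^{(m)}=\mathcal{P}_{n}$ once $m\ge n$, so indeed $\pi\left(\mathcal{P}^{(m)}\right)\searrow\pi\left(\mathcal{P}\right)$, and the Hölder step $\Vert\mathbf{x}\Vert_{1}\le n^{1-1/\alpha}$ is where $\alpha>1$ is used.

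However, the step you yourself call central has a genuine gap as written. The number of degenerate induced copies of $F$ in $G^{*}$ is only bounded by $C_{F}\left(\max_{i}k_{i}\right)K^{v(F)-1}$, and this is $o\left(K^{v(F)}\right)$ only if $\max_{i}k_{i}=o\left(K\right)$, i.e. $\max_{i}x_{i}=o\left(\Vert\mathbf{x}\Vert_{1}\right)$; this is a property of the optimizer, not something produced by letting $n\to\infty$ and $N\to\infty$. Your fallback --- optimizers supported on few coordinates, handled via Proposition \ref{pro1} on the support --- does not cover the problematic cases: the optimizer can have full support while almost all its $\ell^{\alpha}$ mass sits on one coordinate (say $x_{1}^{\alpha}=1-O\left(n^{1-\alpha}\right)$ with the rest spread thinly), so the support is all of $\left[n\right]$, yet $\max_{i}k_{i}$ is comparable to $K$ and your ``routine count'' gives nothing. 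The correct split is on $\max_{i}x_{i}/\Vert\mathbf{x}\Vert_{1}$ against the removal-lemma threshold $\delta=\delta\left(\epsilon,\mathcal{F}^{(m)}\right)$: if $\max_{i}x_{i}\ge c\Vert\mathbf{x}\Vert_{1}$, then $\Vert\mathbf{x}\Vert_{1}\le1/c$ (since $\max_{i}x_{i}\le\Vert\mathbf{x}\Vert_{\alpha}=1$), whence $\lambda^{\left(\alpha\right)}\left(G\right)=P_{G}\left(\mathbf{x}\right)\le\Vert\mathbf{x}\Vert_{1}^{r}\le c^{-r}=O\left(1\right)=o\left(n^{r-r/\alpha}\right)$ because $\alpha>1$; otherwise the removal lemma applies for all large $N$. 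With that dichotomy and the quantifier order (fix $m$, fix $\epsilon$ and its $\delta$, let $N\to\infty$ with $n$ fixed, then $n\to\infty$, then $\epsilon\to0$, then $m\to\infty$) your argument closes, and note the removal error is $\epsilon K^{r}$ for a pre-chosen $\epsilon$, not an unqualified $o\left(K^{r}\right)$. So: a workable alternative proof, at the price of the induced removal lemma, but the concentration case must be repaired as above before it is complete.
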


It seems that a result of this scope is not available in the literature, even
for $2$-graphs, so some remarks are due here. First, using (\ref{maineq}),
every result about $\pi\left(  \mathcal{P}\right)  $ of a hereditary property
$\mathcal{P}$ gives a result about $\lambda\left(  \mathcal{P}\right)  $ as
well, so we readily obtain a number of results for the largest eigenvalue of
uniform graphs. But equality (\ref{maineq}) is more significant, as the left
and right hand sides of (\ref{in1}) could be quite different. Moreover,
finding $\pi\left(  \mathcal{P}\right)  $ now can be reduced to maximization
of a smooth function subject to a smooth constraint, and in this kind of
problem Lagrange multipliers can provide much information on the structure of
the extremal graphs. For example, such approach was used successfully in
\cite{Nik11a}.

We showed that for $\alpha>r$ inequality (\ref{maineq}) follows immediately
from the case $\alpha=r,$ but our proof of the case $1<\alpha\leq r$ is not
easy. The proof, given in Section \ref{pf}, is quite technical and mostly
analytic. It is based on several lemmas of their own interest, which are
presented in Section \ref{le}.

Moreover, for $r=2$ the value of $\pi\left(  \mathcal{P}\right)  $ can be
characterized explicitly and so one can establish $\lambda^{\left(  a\right)
}\left(  \mathcal{P}\right)  $ as well for all $\alpha>1.$ We give here a
short proof for monotone properties, referring the reader to \cite{NikC} for
general hereditary properties.

\begin{theorem}
\label{thCon2}Let $\alpha>1.$ If $\mathcal{P}$ is a monotone property of
$2$-graphs, then
\[
\lambda^{\left(  a\right)  }\left(  \mathcal{P}\right)  =\frac{r-2}{r-1},
\]
where $r=\min\left\{  \chi\left(  G\right)  :G\notin\mathcal{P}\right\}  $.
\end{theorem}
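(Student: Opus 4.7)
The plan is to combine the central Theorem \ref{th3} of the paper with the classical Erdős–Stone–Simonovits theorem for $2$-graphs. Every monotone property is hereditary, so Theorem \ref{th3} immediately yields $\lambda^{(\alpha)}(\mathcal{P}) = \pi(\mathcal{P})$ for every $\alpha > 1$, reducing the claim to verifying that $\pi(\mathcal{P}) = (r-2)/(r-1)$.

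To compute $\pi(\mathcal{P})$, I would first describe $\mathcal{P}$ via its inclusion-minimal forbidden subgraphs. Let $\mathcal{F}$ be the family of those $G \notin \mathcal{P}$ all of whose proper subgraphs lie in $\mathcal{P}$. Since $\mathcal{P}$ is monotone, $\mathcal{P} = Mon(\mathcal{F})$, and since $\chi$ is monotone under taking subgraphs, every $G \notin \mathcal{P}$ contains some $F \in \mathcal{F}$ as a subgraph and hence satisfies $\chi(G) \geq \chi(F)$; therefore $r = \min\{\chi(F) : F \in \mathcal{F}\}$. Fix any $F_0 \in \mathcal{F}$ with $\chi(F_0) = r$. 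Because $\mathcal{P} \subseteq Mon(F_0)$, we have $ex(\mathcal{P},n) \leq ex(Mon(F_0),n)$, and Erdős–Stone–Simonovits gives the upper bound $\pi(\mathcal{P}) \leq (r-2)/(r-1)$. For the matching lower bound, the Turán graph $T_{r-1}(n)$ has chromatic number $r-1$, so none of its subgraphs can have chromatic number as large as $r$; in particular, no $F \in \mathcal{F}$ embeds into $T_{r-1}(n)$, so $T_{r-1}(n) \in \mathcal{P}_n$. This yields $ex(\mathcal{P},n) \geq e(T_{r-1}(n)) = \left(1 - \tfrac{1}{r-1} + o(1)\right)\binom{n}{2}$, and hence $\pi(\mathcal{P}) \geq (r-2)/(r-1)$.

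There is no substantive obstacle in this argument: the theorem is essentially a corollary of the deep Theorem \ref{th3} proved elsewhere in the paper, together with the classical Erdős–Stone–Simonovits result. The only mild point requiring care is the passage from the minimum of $\chi$ over all non-members of $\mathcal{P}$ to that over the minimal forbidden subgraphs, which follows at once from monotonicity of $\chi$ under subgraph inclusion. One should also note that the argument covers even an infinite family $\mathcal{F}$, since both bounds only use a single $F_0 \in \mathcal{F}$ of minimum chromatic number and the membership $T_{r-1}(n) \in \mathcal{P}_n$, respectively.
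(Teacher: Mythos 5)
Your proposal is correct, but it takes a different route from the paper's own proof. You reduce everything to edge densities: monotone implies hereditary, so the central Theorem \ref{th3} gives $\lambda^{(\alpha)}(\mathcal{P})=\pi(\mathcal{P})$ for $\alpha>1$, and you then compute $\pi(\mathcal{P})=\frac{r-2}{r-1}$ from the classical Erd\H{o}s--Stone--Simonovits theorem (upper bound via a minimal forbidden subgraph $F_0$ with $\chi(F_0)=r$, lower bound via $T_{r-1}(n)\in\mathcal{P}$); all the individual steps, including the identification $r=\min\{\chi(F):F\in\mathcal{F}\}$ and $\mathcal{P}=Mon(\mathcal{F})$, are sound, and there is no circularity since Theorem \ref{th3} is proved independently. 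The paper, by contrast, deliberately bypasses Theorem \ref{th3} (this is why it is billed as a ``short proof''): it picks $H\notin\mathcal{P}$ with $\chi(H)=r$, embeds it in a balanced blow-up $K_r(k,\ldots,k)$, observes $\mathcal{P}\subseteq Mon\left(K_r(k,\ldots,k)\right)$ by monotonicity, and then invokes the blow-up invariance Theorem \ref{thblo} to get $\lambda^{(\alpha)}(\mathcal{P})\leq\lambda^{(\alpha)}\left(Mon(K_r)\right)=\frac{r-2}{r-1}$, the last value coming from Motzkin--Straus together with the flatness of $Mon(K_r)$ (Theorem \ref{th4}) rather than from Erd\H{o}s--Stone--Simonovits; the lower bound via $T_{r-1}(n)\in\mathcal{P}$ is the same in both arguments. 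In effect your ESS step and the paper's ``Theorem \ref{thblo} plus Motzkin--Straus'' step play identical roles (ESS is essentially blow-up invariance of $\pi$ plus Tur\'an), so the trade-off is: your argument is shorter but leans on the hard analytic Theorem \ref{th3} and imports ESS wholesale, while the paper's stays entirely at the level of $\lambda^{(\alpha)}$, is independent of Theorem \ref{th3}, and pays for this only with the Removal-Lemma-based Theorem \ref{thblo}.
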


It is not difficult to find a hereditary property $\mathcal{P}$ of $2$-graphs
for which $\lambda^{\left(  1\right)  }\left(  \mathcal{P}\right)  >\pi\left(
\mathcal{P}\right)  .$ Indeed, let $K_{3}\left(  1,2,2\right)  $ be the
blow-up of a triangle and let $\mathcal{P}=Mon\left(  K_{3}\left(
1,2,2\right)  \right)  $. First, Theorem \ref{thblo} and Motzkin-Straus's
result imply that for every $\alpha>1,$%
\[
\lambda^{\left(  \alpha\right)  }\left(  Mon\left(  K_{3}\left(  1,2,2\right)
\right)  \right)  =\lambda^{\left(  \alpha\right)  }\left(  Mon\left(
K_{3}\right)  \right)  =\lambda^{\left(  1\right)  }\left(  Mon\left(
K_{3}\right)  \right)  =1/2.
\]
Now, taking $G_{n}$ to be the graph consisting of a $K_{4}$ and $n-4$ isolated
vertices, we see that $G_{n}\in\mathcal{P}_{n}$ for $n\geq4.$ But
$\lambda^{\left(  1\right)  }\left(  G_{n}\right)  =3/4,$ and so
$\lambda^{\left(  1\right)  }\left(  \mathcal{P}\right)  \geq3/4>1/2.$

Obviously, Theorem \ref{th3} puts in focus hereditary properties $\mathcal{P}$
for which $\lambda^{\left(  1\right)  }\left(  \mathcal{P}\right)  $ also
satisfies (\ref{maineq}); thus the definition: \emph{a hereditary property
}$P$\emph{ of }$r$\emph{-graphs is called }\textbf{flat}\emph{ if }%
$\lambda^{\left(  1\right)  }\left(  \mathcal{P}\right)  =\pi\left(
\mathcal{P}\right)  .\medskip$

It turns out that flat properties possess truly remarkable features with
respect to extremal problems, some of which are presented in the next subsection.

\subsection{Flat properties}

Let us note that, in general, $\pi\left(  \mathcal{P}\right)  $ alone is not
sufficient to estimate $ex\left(  \mathcal{P},n\right)  $ for small values of
$n$ and for arbitrary hereditary property $\mathcal{P}.$ However, flat
properties allow for tight, explicit upper bounds on $ex\left(  \mathcal{P}%
,n\right)  $ and $\lambda^{\left(  \alpha\right)  }\left(  \mathcal{P}\right)
$. To emphasize the substance of the general statements in this subsection, we
first outline a class of flat properties, whose study has been started by
Sidorenko \cite{Sid87}, albeit in a different setting.\medskip

\emph{A graph property }$P$\emph{ is said to be multiplicative if }$G\in
P_{n}$\emph{ implies that }$G\left(  k_{1},\ldots,k_{n}\right)  \in P$\emph{
\ for every vector of positive integers }$k_{1},\ldots,k_{n}.$ \emph{This is
to say, a multiplicative property contains the blow-ups of all its
members.}\medskip

Multiplicative properties are quite convenient for extremal graph theory, and
they are ubiquitous as well. Indeed, following Sidorenko \cite{Sid87}, call a
graph $F$ \textbf{covering} if every two vertices of $F$ are contained in an
edge. Clearly, complete $r$-graphs are covering, and for $r=2$ these are the
only covering graphs, but for $r\geq3$ there are many noncomplete ones. For
example, the Fano plane $3$-graph $F_{7}$ is a noncomplete covering graph.
Obviously, if $F$ is a covering graph, then $Mon\left(  F\right)  $ is both a
hereditary and a multiplicative property.

Below we illustrate Theorems \ref{th4} and \ref{th5} using $F_{7}$ as
forbidden graph because it is covering and $\pi\left(  Mon\left(
F_{7}\right)  \right)  $ is known. To keep our presentation focused, we stick
to $F_{7}$ only, but there are other graphs with the same properties; for
instance, Keevash in \cite{Kee11}, Sec. 14, lists several such graphs, like
\textquotedblleft expanded triangle\textquotedblright, \textquotedblleft%
$3$-book with $3$ pages\textquotedblright, \textquotedblleft$4$-book with $4$
pages\textquotedblright\ and others. Using these and similar references, the
reader may easily come up with other illustrations. Let us point that these
applications are new and are not available in the literature.

Two other examples of hereditary, multiplicative properties are based on
vertex colorings. Recall that the \textbf{chromatic number} $\chi\left(
G\right)  $ of an $r$-graph $G\ $is the smallest number $\chi$ such that
$V\left(  G\right)  $ can be partitioned into $\chi$ edgeless sets. Likewise,
the \textbf{weak} \textbf{chromatic number} $\underline{\chi}\left(  G\right)
$ is the smallest number $\chi$ such that $V\left(  G\right)  $ can be
partitioned into $\chi$ sets so that every set intersects every edge in at
most one vertex.

Let now $\mathcal{C}\left(  p\right)  $ be the family of all $r$-graphs $G$
with $\chi\left(  G\right)  \leq p$ and $\underline{\mathcal{C}}\left(
q\right)  $ be the family of all $r$-graphs $G$ with $\underline{\chi}\left(
G\right)  \leq p.$ Note first that $\mathcal{C}\left(  p\right)  $ and
$\underline{\mathcal{C}}\left(  q\right)  $ are hereditary and multiplicative
properties, so they are also flat. This statement is more or less obvious, but
it does not follow by forbidding covering subgraphs.

The following proposition summarizes the principal facts about $\mathcal{C}%
\left(  p\right)  $ and $\underline{\mathcal{C}}\left(  q\right)  $.

\begin{proposition}
\label{pro9} For all $p,q$ the classes $\mathcal{C}\left(  p\right)  $ and
$\underline{\mathcal{C}}\left(  q\right)  $ are hereditary and multiplicative
properties, and
\[
\pi\left(  \mathcal{C}\left(  p\right)  \right)  =\left(  1-p^{-r+1}\right)
\ \ \ \text{and}\ \ \ \ \pi\left(  \underline{\mathcal{C}}\left(  q\right)
\right)  =r!\binom{q}{r}q^{-r}.
\]

\end{proposition}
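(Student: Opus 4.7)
The plan is to handle the four assertions separately: the hereditary and multiplicative nature of both classes, then the two extremal values. The heredity of $\mathcal{C}(p)$ and $\underline{\mathcal{C}}(q)$ is immediate: the restriction of a (weak) $p$-coloring of $G$ to any induced subgraph is still a valid (weak) $p$-coloring of that subgraph. For the multiplicative property, given $G\in\mathcal{C}(p)_n$ with color classes $V_1,\ldots,V_p$ and a blow-up $G(k_1,\ldots,k_n)$ with blob $U_v$ at each $v$, I would color every vertex of $U_v$ with the color of $v$; any edge of the blow-up $\{u_1,\ldots,u_r\}$ with $u_j\in U_{v_j}$ arises from an edge $\{v_1,\ldots,v_r\}$ of $G$, whose vertices must use at least two different colors, so $\{u_1,\ldots,u_r\}$ is not monochromatic. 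The argument for $\underline{\mathcal{C}}(q)$ is the same: since $v_1,\ldots,v_r$ lie in $r$ distinct weak-color classes of $G$, so do $u_1,\ldots,u_r$.

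For $\pi(\mathcal{C}(p))$ I would exhibit the matching lower and upper bounds. On the lower side, take the complete $p$-partite $r$-graph $T_r(n,p)$ on equal parts (when $p\mid n$), whose edges are all $r$-subsets not contained in any single part; clearly $T_r(n,p)\in\mathcal{C}(p)$ and
\[
e(T_r(n,p)) \;=\; \binom{n}{r} - p\binom{n/p}{r}.
\]
On the upper side, if $G\in\mathcal{C}(p)_n$ with color classes of sizes $n_1,\ldots,n_p$, then $e(G)\le\binom{n}{r}-\sum_i\binom{n_i}{r}$. Since $x\mapsto\binom{x}{r}$ is convex, $\sum_i\binom{n_i}{r}\ge p\binom{n/p}{r}$, so $e(G)\le e(T_r(n,p))$. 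Dividing by $\binom{n}{r}$ and letting $n\to\infty$, one has $\binom{n/p}{r}/\binom{n}{r}\to p^{-r}$, hence $\pi(\mathcal{C}(p))=1-p\cdot p^{-r}=1-p^{1-r}$.

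For $\pi(\underline{\mathcal{C}}(q))$ I would proceed in the same spirit. The lower bound comes from the complete $q$-partite $r$-graph $K^{(r)}(n/q,\ldots,n/q)$, which lies in $\underline{\mathcal{C}}(q)$ and has $\binom{q}{r}(n/q)^{r}$ edges. For the upper bound, if $G\in\underline{\mathcal{C}}(q)_n$ has weak color classes of sizes $n_1,\ldots,n_q$, every edge uses $r$ distinct classes, so
\[
e(G)\;\le\;\sum_{\{i_1,\ldots,i_r\}\in\binom{[q]}{r}} n_{i_1}\cdots n_{i_r} \;=\; e_r(n_1,\ldots,n_q),
\]
the $r$-th elementary symmetric polynomial. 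Maclaurin's inequality gives $e_r(n_1,\ldots,n_q)\le\binom{q}{r}(n/q)^r$, so $e(G)\le\binom{q}{r}(n/q)^r$. Dividing by $\binom{n}{r}$ and using $\binom{n}{r}\sim n^r/r!$ yields $\pi(\underline{\mathcal{C}}(q))=r!\binom{q}{r}q^{-r}$.

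The technical content is modest; the only genuine care is in the extremal-density step, where one must justify that the convexity argument for $\binom{x}{r}$ (and Maclaurin's inequality for $e_r$) really gives the sharp constant, and then handle the case $p\nmid n$ or $q\nmid n$ by rounding the part sizes, which only loses lower-order terms in $n$ and hence is absorbed in the limit defining $\pi$. No deeper obstacle appears to arise.
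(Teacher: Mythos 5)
Your proof is correct. The paper itself offers no argument for Proposition \ref{pro9} (it is dismissed as ``more or less obvious''), and what you give is the standard proof one would expect: heredity and multiplicativity by restricting, respectively lifting, the (weak) coloring, the lower bounds from balanced complete multipartite constructions, and the upper bounds via convexity of $\binom{x}{r}$ and Maclaurin's inequality, which is precisely the toolkit the paper invokes elsewhere (e.g.\ in Lemma \ref{le1}). The only points needing the care you already flag are the divisibility of $n$ by $p$ or $q$ and the non-integer arguments in the convexity step; both are harmless since $\pi(\mathcal{P})$ exists by Proposition \ref{proKNS}, so it suffices to evaluate the limit along $n$ divisible by $pq$ and to absorb $O(n^{r-1})$ errors.
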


The motivation for the next result comes from Sidorenko \cite{Sid87}, Theorem
2.6, who proved that if $F$ is a covering graph, then
\begin{equation}
\lambda^{\left(  1\right)  }\left(  Mon\left(  F\right)  \right)  =\pi\left(
Mon\left(  F\right)  \right)  .\label{Seq}%
\end{equation}
This can be recast in our terminology as: \emph{if }$F$\emph{ is a covering
graph, then }$Mon\left(  F\right)  $\emph{ is a flat property}. To analyze the
underpinnings of this result, note that $Mon\left(  F\right)  $ is both
hereditary and multiplicative property. The following theorem gives a natural
generalization of (\ref{Seq}).

\begin{theorem}
\label{th4}If $\mathcal{P}$ is a hereditary, multiplicative property, then it
is flat; that is to say,
\begin{equation}
\lambda^{\left(  \alpha\right)  }\left(  \mathcal{P}\right)  =\pi\left(
\mathcal{P}\right)  \label{eqa}%
\end{equation}
for every $\alpha\geq1.$
\end{theorem}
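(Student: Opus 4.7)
The plan is to reduce Theorem \ref{th4} to its $\alpha=1$ case, since for $\alpha>1$ the conclusion $\lambda^{(\alpha)}(\mathcal{P})=\pi(\mathcal{P})$ already follows from Theorem \ref{th3} applied to the hereditary property $\mathcal{P}$; so multiplicativity is really needed only to handle $\alpha=1$. The lower bound $\lambda^{(1)}(\mathcal{P})\geq\pi(\mathcal{P})$ is already provided by (\ref{limgin}), hence the whole proof reduces to showing that $\lambda^{(1)}(G)\leq\pi(\mathcal{P})$ for every $G\in\mathcal{P}$; taking the supremum over $G\in\mathcal{P}_n$ and using the monotonicity (\ref{bnd1}) then yields $\lambda^{(1)}(\mathcal{P})\leq\pi(\mathcal{P})$.

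Fix $G\in\mathcal{P}$ of order $n$ and let $\mathbf{x}=(x_1,\ldots,x_n)$, $x_i\geq 0$, $\sum x_i=1$, be a maximizer for $\lambda^{(1)}(G)$; nonnegativity of $\mathbf{x}$ is obtained by replacing each coordinate with its absolute value, which does not decrease $P_G$. Since $\mathcal{P}$ is hereditary, the induced subgraph on $S=\{i:x_i>0\}$ still lies in $\mathcal{P}$ and has the same $\lambda^{(1)}$-value, so we may assume $x_i>0$ for every $i$. Given $\epsilon>0$, I would then choose positive integers $k_1,\ldots,k_n$ with $N=k_1+\cdots+k_n$ and $k_i/N$ so close to $x_i$ that, by continuity of $P_G$ on the simplex,
\[
\frac{r!}{N^r}\sum_{\{i_1,\ldots,i_r\}\in E(G)}k_{i_1}\cdots k_{i_r}\;\geq\;\lambda^{(1)}(G)-\epsilon.
\]
Now comes the key use of multiplicativity: for every integer $t\geq 1$ the blow-up $H_t=G(tk_1,\ldots,tk_n)$ belongs to $\mathcal{P}$, has $tN$ vertices, and exactly
\[
e(H_t)\;=\;t^{r}\sum_{\{i_1,\ldots,i_r\}\in E(G)}k_{i_1}\cdots k_{i_r}
\]
edges. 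Dividing by $\binom{tN}{r}$ and letting $t\to\infty$ gives
\[
\pi(\mathcal{P})\;\geq\;\lim_{t\to\infty}\frac{e(H_t)}{\binom{tN}{r}}\;=\;\frac{r!}{N^r}\sum_{\{i_1,\ldots,i_r\}\in E(G)}k_{i_1}\cdots k_{i_r}\;\geq\;\lambda^{(1)}(G)-\epsilon,
\]
and letting $\epsilon\downarrow 0$ yields $\lambda^{(1)}(G)\leq\pi(\mathcal{P})$, as required.

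I do not expect any real obstacle: the only subtlety is that rational approximation with strictly positive $k_i$ forces one to trim the support of the maximizer beforehand, which is precisely where hereditariness is used. Once this is done, the identity $e(G(tk_1,\ldots,tk_n))=t^{r}\sum k_{i_1}\cdots k_{i_r}$ converts the $l^1$-Lagrangian of $G$ directly into an edge density inside $\mathcal{P}$, and this is the whole content of flatness. The argument also makes transparent why multiplicativity is indispensable: without it, the blow-ups $H_t$ might fall outside $\mathcal{P}$ and the inequality $\pi(\mathcal{P})\geq e(H_t)/\binom{tN}{r}$ would lose its meaning.
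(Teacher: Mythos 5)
Your proposal is correct and follows essentially the same route as the paper: reduce to the case $\alpha=1$, bound $P_{G}\left(  \mathbf{x}\right)  $ for an $l^{1}$-unit maximizer by approximating it with positive rationals $k_{i}/N$, use multiplicativity to place the blow-ups $G\left(  tk_{1},\ldots,tk_{n}\right)  $ inside $\mathcal{P}$, and let the scaling parameter tend to infinity so that the edge density is controlled by $\pi\left(  \mathcal{P}\right)  $. The only cosmetic difference is that you trim the support of the maximizer via hereditariness where the paper simply invokes continuity of $P_{G}$ to reduce to positive rational vectors; both devices work.
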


To illustrate the usability of Theorem \ref{th4} note that the $3$-graph
$F_{7}$ is covering, and, as determined in \cite{FuSi05} and \cite{KeSu05},
$\pi\left(  Mon\left(  F_{7}\right)  \right)  =3/4,$ so we immediately get
that if $\alpha\geq1,$ then
\[
\lambda^{\left(  \alpha\right)  }\left(  Mon\left(  F_{7}\right)  \right)
=3/2.
\]
However, below we show that even more convenient bounds are available in this
and similar cases. Indeed a distinctive feature of all flat properties, and
the one that justifies the introduction of the concept, is the fact that there
exist neat and tight upper bounds on $\lambda^{\left(  \alpha\right)  }\left(
G\right)  $ and $e\left(  G\right)  $ for every graph $G$ that belongs to a
flat property. This claim is substantiated in Theorems \ref{th5} and \ref{th6} below.

\begin{theorem}
\label{th5}If $\mathcal{P}$ is a flat property, and $G\in\mathcal{P}_{n},$
then
\begin{equation}
e\left(  G\right)  \leq\pi\left(  \mathcal{P}\right)  n^{r}/r!. \label{Seq1}%
\end{equation}
and for every $\alpha\geq1,$%
\begin{equation}
\lambda^{\left(  \alpha\right)  }\left(  G\right)  \leq\pi\left(
\mathcal{P}\right)  n^{r-r/\alpha}. \label{upb}%
\end{equation}
Both inequalities (\ref{Seq1}) and (\ref{upb}) are tight.
\end{theorem}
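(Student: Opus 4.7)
The plan is to reduce both inequalities to the single fact that $\lambda^{(1)}(G)\leq\pi(\mathcal{P})$ for every $G\in\mathcal{P}_n$, which is immediate from flatness. By the definition of flat, $\lambda^{(1)}(\mathcal{P})=\pi(\mathcal{P})$. Combining this with the case $\alpha=1$ of Theorem \ref{th1} (bound (\ref{bnd1})), I get $\lambda^{(1)}(\mathcal{P},n)\leq\lambda^{(1)}(\mathcal{P})=\pi(\mathcal{P})$ for every $n$, and since $\lambda^{(1)}(G)\leq\lambda^{(1)}(\mathcal{P},n)$ trivially, I conclude $\lambda^{(1)}(G)\leq\pi(\mathcal{P})$.

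To derive (\ref{Seq1}), I apply the left-hand inequality of Proposition \ref{pro1} with $\alpha=1$: it gives $r!e(G)/n^r\leq\lambda^{(1)}(G)\leq\pi(\mathcal{P})$, so $e(G)\leq\pi(\mathcal{P})n^r/r!$. To derive (\ref{upb}) I invoke Proposition \ref{pro5}, which states that $h_G(\alpha)=\lambda^{(\alpha)}(G)n^{r/\alpha}$ is nonincreasing in $\alpha$. Therefore $h_G(\alpha)\leq h_G(1)$, which rearranges to
\[
\lambda^{(\alpha)}(G)\leq \lambda^{(1)}(G)\cdot n^{r-r/\alpha}\leq \pi(\mathcal{P})n^{r-r/\alpha},
\]
proving (\ref{upb}).

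For tightness of both bounds, I would use the class $\underline{\mathcal{C}}(q)$ from Proposition \ref{pro9}. Since $\underline{\mathcal{C}}(q)$ is hereditary and multiplicative, it is flat by Theorem \ref{th4}, and $\pi(\underline{\mathcal{C}}(q))=r!\binom{q}{r}q^{-r}$. Taking $G$ to be the complete $r$-partite $r$-graph on $n=qk$ vertices with $q$ equal parts of size $k$, we have $G\in\underline{\mathcal{C}}(q)_n$ and $e(G)=\binom{q}{r}k^r=\pi(\underline{\mathcal{C}}(q))n^r/r!$, showing that (\ref{Seq1}) is tight. Plugging the uniform vector $\mathbf{x}=n^{-1/\alpha}(1,\ldots,1)$ into $P_G$ yields $P_G(\mathbf{x})=r!e(G)/n^{r/\alpha}=\pi(\underline{\mathcal{C}}(q))n^{r-r/\alpha}$, which matches the upper bound in (\ref{upb}), establishing tightness there as well.

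Since the whole argument simply chains together already-established propositions, I do not expect a genuine obstacle; the only subtle point worth verifying carefully is that the monotonicity of $h_G$ in Proposition \ref{pro5} yields exactly the interpolation needed between $\alpha=1$ and a general $\alpha\geq1$, and that flatness lets us replace the $n$-dependent quantity $\lambda^{(1)}(G)$ by the uniform constant $\pi(\mathcal{P})$ at the crucial step.
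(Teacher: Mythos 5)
Your proof is correct, but it takes a genuinely different route from the paper's. The paper proves (\ref{Seq1}) by Sidorenko's blow-up argument, namely $e\left(  G\left(  k,\ldots,k\right)  \right)  =k^{r}e\left(  G\right)  \leq ex\left(  \mathcal{P},nk\right)$ followed by $k\rightarrow\infty$, and proves (\ref{upb}) by the blow-up identity of Proposition \ref{pro8} together with the limit (\ref{exlima}); note that this argument needs $G\left(  k,\ldots,k\right)  \in\mathcal{P}$, and indeed the paper's proof opens with \textquotedblleft let $\mathcal{P}$ be a hereditary and multiplicative family,\textquotedblright\ so as written it uses multiplicativity, not just flatness. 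Your route instead reduces everything to the single inequality $\lambda^{\left(  1\right)  }\left(  G\right)  \leq\lambda^{\left(  1\right)  }\left(  \mathcal{P},n\right)  \leq\lambda^{\left(  1\right)  }\left(  \mathcal{P}\right)  =\pi\left(  \mathcal{P}\right)  $, using the monotonicity statement (\ref{bnd1}) of Theorem \ref{th1} and the definition of flatness, and then gets (\ref{Seq1}) from the lower bound in Proposition \ref{pro1} and (\ref{upb}) from the monotonicity of $h_{G}$ in Proposition \ref{pro5}. This is closer in spirit to the paper's proof of Theorem \ref{th6} (which uses Proposition \ref{pro6} at the analogous step), and it buys something real: it works for every flat hereditary property, exactly matching the stated hypothesis, including non-multiplicative examples such as $Her\left(  C_{4}\right)  $, where the blow-up argument does not directly apply; the paper's approach, on the other hand, stays self-contained within the multiplicative setting and makes the tightness mechanism (blow-ups) transparent. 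Your tightness examples via $\underline{\mathcal{C}}\left(  q\right)  $ are fine (the paper does not spell these out); only note the small slip that the extremal graph should be called the complete $q$-partite $r$-graph with $q$ parts of size $k$, whose edges are the $r$-sets meeting $r$ distinct parts.
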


When $\mathcal{P}=Mon\left(  F\right)  $ and $F$ is a covering graph, the
bound (\ref{Seq1}) has been proved by Sidorenko in \cite{Sid87}, Theorem 2.3.
Clearly, Theorem \ref{th5} is much more general, although its proof is similar
to that of Sidorenko. Taking again the Fano plane as an example, we obtain the
following tight inequality:

\begin{corollary}
\label{cor2} If $G$ is a $3$-graph of order $n$, not containing the Fano
plane, then for all $\alpha\geq1,$
\begin{equation}
\lambda^{\left(  \alpha\right)  }\left(  G\right)  \leq\frac{3}{4}%
n^{3-3/\alpha}. \label{fpin}%
\end{equation}

\end{corollary}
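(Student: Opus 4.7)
The plan is to derive this corollary immediately from Theorems \ref{th4} and \ref{th5}, together with the known value of $\pi(Mon(F_{7}))$. First I would set $\mathcal{P}=Mon(F_{7})$ and verify that $\mathcal{P}$ satisfies the hypotheses of Theorem \ref{th4}; then flatness gives the value of $\pi(\mathcal{P})$ inside the bound of Theorem \ref{th5}, which is exactly (\ref{fpin}).

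The two conditions to check are that $\mathcal{P}$ is hereditary and multiplicative. Hereditarity is automatic because $\mathcal{P}$ is even monotone: deleting vertices or edges cannot create a copy of $F_{7}$. For multiplicativity I would use the hypothesis that $F_{7}$ is a \emph{covering} $3$-graph, i.e.\ every two of its vertices lie in a common edge. Suppose $G\in\mathcal{P}_{n}$ and consider a blow-up $G(k_{1},\ldots,k_{n})$. If $F_{7}\subseteq G(k_{1},\ldots,k_{n})$, then any two vertices of the embedded $F_{7}$ must lie in distinct blow-up classes, since two vertices in the same class share no edge, contradicting the covering property of $F_{7}$. Hence the embedding descends to a copy of $F_{7}$ in $G$, which is impossible. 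So $G(k_{1},\ldots,k_{n})\in\mathcal{P}$, and $\mathcal{P}$ is multiplicative.

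Having checked both properties, Theorem \ref{th4} gives that $\mathcal{P}$ is flat. By the cited results of F\"uredi--Simonovits and Keevash--Sudakov, $\pi(Mon(F_{7}))=3/4$. Then Theorem \ref{th5}, applied with $r=3$ and $\mathcal{P}=Mon(F_{7})$, yields
\[
\lambda^{(\alpha)}(G)\leq\pi(\mathcal{P})\,n^{r-r/\alpha}=\tfrac{3}{4}\,n^{3-3/\alpha}
\]
for every $G\in\mathcal{P}_{n}$ and every $\alpha\geq 1$, which is (\ref{fpin}).

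There is essentially no obstacle here: this is a direct application of previously established machinery. The only nontrivial bookkeeping step is the verification that $Mon(F_{7})$ is multiplicative, which hinges entirely on the covering property of $F_{7}$; once that is in hand, Theorems \ref{th4} and \ref{th5} do all the work.
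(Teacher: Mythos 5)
Your proof is correct and is essentially the paper's own argument: the paper treats Corollary \ref{cor2} as an immediate consequence of Theorem \ref{th5} applied to the flat property $Mon(F_{7})$, using that covering graphs yield hereditary multiplicative (hence, by Theorem \ref{th4}, flat) monotone properties and that $\pi\left(Mon\left(F_{7}\right)\right)=3/4$ by F\"uredi--Simonovits and Keevash--Sudakov. Your explicit verification that blow-ups cannot create a copy of a covering graph is exactly the (stated as obvious) justification the paper relies on.
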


This inequality is essentially equivalent to Corollary 3 in \cite{KLM13},
albeit it is somewhat less precise. We believe however, that Theorem \ref{th5}
shows clearly why such a result is possible at all.

With respect to chromatic number, an early result of Cvetkovi\'{c}
\cite{Cve72} states: \emph{if }$G$\emph{ is a }$2$\emph{-graph of order }%
$n$\emph{ and chromatic number }$\chi,$\emph{ then}
\[
\lambda\left(  G\right)  \leq\frac{\chi-1}{\chi}n.
\]
This bound easily generalizes for hypergraphs.

\begin{corollary}
Let $G$ be an $r$-graph of order $n$ and let $\alpha\geq1.$

(i) If $\chi\left(  G\right)  =\chi$, then
\[
\lambda^{\left(  \alpha\right)  }\left(  G\right)  \leq\left(  1-\chi
^{-r+1}\right)  n^{r-r/\alpha};
\]

(ii) If $\underline{\chi}\left(  G\right)  =\underline{\chi}$, then
\[
\lambda^{\left(  \alpha\right)  }\left(  G\right)  \leq r!\binom
{\underline{\chi}}{r}\underline{\chi}^{-r}n^{r-r/\alpha}.
\]

\end{corollary}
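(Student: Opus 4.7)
The plan is to apply Theorem \ref{th5} to the two hereditary properties $\mathcal{C}(\chi)$ and $\underline{\mathcal{C}}(\underline{\chi})$ introduced earlier, once we have verified that they are flat. Both steps are already essentially in place in the paper, so the argument reduces to lining up the right references.

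For part (i), I would first observe that the hypothesis $\chi(G)=\chi$ is exactly the statement $G\in\mathcal{C}(\chi)_{n}$. By Proposition \ref{pro9}, $\mathcal{C}(\chi)$ is a hereditary and multiplicative property, so by Theorem \ref{th4} it is flat, i.e., $\lambda^{(\alpha)}(\mathcal{C}(\chi))=\pi(\mathcal{C}(\chi))$ for every $\alpha\geq 1$. Then the bound (\ref{upb}) of Theorem \ref{th5} applied to $G\in\mathcal{C}(\chi)_{n}$ gives
\[
\lambda^{(\alpha)}(G)\leq\pi(\mathcal{C}(\chi))\,n^{r-r/\alpha},
\]
and Proposition \ref{pro9} supplies $\pi(\mathcal{C}(\chi))=1-\chi^{-r+1}$, which yields the desired inequality.

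For part (ii), the same three-step template applies, now with the property $\underline{\mathcal{C}}(\underline{\chi})$: the hypothesis $\underline{\chi}(G)=\underline{\chi}$ says $G\in\underline{\mathcal{C}}(\underline{\chi})_{n}$; Proposition \ref{pro9} states that $\underline{\mathcal{C}}(\underline{\chi})$ is hereditary and multiplicative; Theorem \ref{th4} then makes it flat; and (\ref{upb}) from Theorem \ref{th5} combined with the value $\pi(\underline{\mathcal{C}}(\underline{\chi}))=r!\binom{\underline{\chi}}{r}\underline{\chi}^{-r}$ from Proposition \ref{pro9} delivers the stated bound.

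There is essentially no obstacle here beyond bookkeeping: the corollary is a direct specialization of Theorem \ref{th5} to two concrete flat properties whose extremal densities are already computed in Proposition \ref{pro9}. The only mild subtlety is that we must not rely on $\chi(G)=\chi$ being sharp (merely $\chi(G)\leq\chi$ suffices for membership in $\mathcal{C}(\chi)$, and analogously for $\underline{\mathcal{C}}$), but this monotonicity in $\chi$ is automatic and requires no separate argument.
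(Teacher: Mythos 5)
Your proposal is correct and matches the paper's intended argument exactly: membership in $\mathcal{C}(\chi)$ (resp. $\underline{\mathcal{C}}(\underline{\chi})$), flatness via Proposition \ref{pro9} and Theorem \ref{th4}, and then the bound (\ref{upb}) of Theorem \ref{th5} together with the densities computed in Proposition \ref{pro9}. Nothing further is needed.
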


Furthermore, recalling that complete graphs are the only covering $2$-graphs,
it becomes clear that the bound (\ref{upb}) is analogous to Wilf's bound
\cite{Wil86}: \emph{if }$G$\emph{ is a }$2$\emph{-graph of order }$n$\emph{
and clique number }$\omega,$\emph{ then}%
\begin{equation}
\lambda\left(  G\right)  \leq\frac{\omega-1}{\omega}n. \label{wilin}%
\end{equation}

Inequality (\ref{wilin}) has been improved by a subtler inequality in
\cite{Nik02}, namely: \emph{if }$G$\emph{ is a }$2$\emph{-graph with }%
$m$\emph{ edges and clique number }$\omega,$\emph{ then}%
\begin{equation}
\lambda\left(  G\right)  \leq\sqrt{\frac{2\left(  \omega-1\right)  }{\omega}%
m}. \label{edin}%
\end{equation}
To see that (\ref{edin}) implies (\ref{wilin}) it is enough to recall the
Tur\'{a}n bound $m\leq\left(  1-1/\omega\right)  n^{2}/2$. It turns out that
the proof of (\ref{edin}) generalizes to hypergraphs, giving the following
theorem, which strengthens (\ref{upb}) exactly as (\ref{edin}) strengthens
(\ref{wilin}).

\begin{theorem}
\label{th6}If $\mathcal{P}$ is a flat property, and $G\in\mathcal{P}$, then
\begin{equation}
\lambda^{\left(  \alpha\right)  }\left(  G\right)  \leq\pi\left(  G\right)
^{1/\alpha}\left(  r!e\left(  G\right)  \right)  ^{1-1/\alpha}. \label{turin}%
\end{equation}

\end{theorem}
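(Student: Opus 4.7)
The statement almost certainly contains a typo: $\pi(G)^{1/\alpha}$ should be $\pi(\mathcal{P})^{1/\alpha}$, so that the claim is the natural hypergraph generalisation of (\ref{edin}). I read the target inequality as
\[
\lambda^{(\alpha)}(G)\ \leq\ \pi(\mathcal{P})^{1/\alpha}\,\bigl(r!\,e(G)\bigr)^{1-1/\alpha}.
\]

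The plan is to rewrite the polyform in terms of the $\ell^{1}$-normalised vector and then combine Hölder's inequality with the flatness hypothesis, mimicking the two-graph proof sketched after (\ref{edin}). Concretely, fix a nonnegative optimizer $\mathbf{x}$ with $\|\mathbf{x}\|_{\alpha}=1$ and $P_G(\mathbf{x})=\lambda^{(\alpha)}(G)$. Define $y_i=x_i^{\alpha}$, so that $\|\mathbf{y}\|_1=1$ and $x_{i_1}\cdots x_{i_r}=\bigl(\prod_{i\in e}y_i\bigr)^{1/\alpha}$ for every edge $e=\{i_1,\ldots,i_r\}$. Thus
\[
\lambda^{(\alpha)}(G)\ =\ r!\sum_{e\in E(G)}\Bigl(\prod_{i\in e}y_i\Bigr)^{1/\alpha}.
\]

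Now I would apply Hölder's inequality to the right-hand sum with conjugate exponents $\alpha/(\alpha-1)$ and $\alpha$ (the $\alpha=1$ case is trivial since the claim reduces to flatness itself). This gives
\[
\sum_{e\in E(G)}\Bigl(\prod_{i\in e}y_i\Bigr)^{1/\alpha}\ \leq\ e(G)^{\,1-1/\alpha}\Bigl(\sum_{e\in E(G)}\prod_{i\in e}y_i\Bigr)^{\!1/\alpha}.
\]
The inner sum is exactly $P_G(\mathbf{y})/r!$, and since $\|\mathbf{y}\|_1=1$ we have $P_G(\mathbf{y})\leq \lambda^{(1)}(G)$. Combining,
\[
\lambda^{(\alpha)}(G)\ \leq\ \bigl(r!\,e(G)\bigr)^{1-1/\alpha}\,\lambda^{(1)}(G)^{1/\alpha}.
\]

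The last step uses flatness: since $G\in\mathcal{P}_n$, the monotonicity in $n$ supplied by (\ref{bnd1}) yields $\lambda^{(1)}(G)\leq \lambda^{(1)}(\mathcal{P},n)\leq \lambda^{(1)}(\mathcal{P})$, and the assumption that $\mathcal{P}$ is flat gives $\lambda^{(1)}(\mathcal{P})=\pi(\mathcal{P})$. Substituting $\lambda^{(1)}(G)\leq \pi(\mathcal{P})$ into the previous display completes the proof. I expect the only nontrivial move to be recognising the right Hölder exponents after the change of variables $y_i=x_i^{\alpha}$; once that is done, the flatness hypothesis does the rest, and in particular the proof makes clear why this bound refines (\ref{upb}) in exactly the same way that (\ref{edin}) refines Wilf's bound (\ref{wilin}).
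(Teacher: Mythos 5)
Your proof is correct and follows essentially the paper's own route: your Hölder step after the substitution $y_i=x_i^{\alpha}$ is precisely a re-derivation of the special case of Proposition \ref{pro6} with exponents $1$ and $\alpha$, after which you invoke $\lambda^{(1)}(G)\leq\lambda^{(1)}(\mathcal{P})=\pi(\mathcal{P})$ via (\ref{bnd1}) and flatness, exactly as the paper does. You are also right that $\pi(G)$ in (\ref{turin}) is a typo for $\pi(\mathcal{P})$.
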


Let us emphasize the peculiar fact that the bound (\ref{turin}) does not
depend on the order of $G,$ but it is asymptotically tight in many cases. In
particular, for $3$-graphs with no $F_{7}$ we obtain the following tight bound:

\begin{corollary}
If $G$ is a $3$-graph with $m$ edges, and $G$ does not contain the Fano plane,
then
\[
\lambda^{\left(  \alpha\right)  }\left(  G\right)  \leq3\cdot2^{1-3/\alpha
}m^{1-1/\alpha}.
\]

\end{corollary}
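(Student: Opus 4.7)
The plan is to obtain the bound by a direct specialization of Theorem~\ref{th6} to the monotone property $\mathcal{P}=Mon(F_{7})$, followed by an arithmetic simplification. To invoke Theorem~\ref{th6} I need two preliminaries: that $\mathcal{P}$ is flat, and the exact value of $\pi(\mathcal{P})$.

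Flatness is immediate from what the paper has already established. The Fano plane $F_{7}$ is a covering $3$-graph since any two of its seven points lie on a common line, i.e., in a common edge. As remarked just before Theorem~\ref{th4}, whenever $F$ is covering the property $Mon(F)$ is both hereditary and multiplicative; Theorem~\ref{th4} then guarantees that such a property is flat. For the value of $\pi$, the paper records $\pi(Mon(F_{7}))=3/4$, proved by F\"uredi--Simonovits \cite{FuSi05} and Keevash--Sudakov \cite{KeSu05}.

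With these two inputs in hand, I apply Theorem~\ref{th6} with $r=3$, $\mathcal{P}=Mon(F_{7})$, and $e(G)=m$. Using $G\in Mon(F_{7})$ and $\pi(\mathcal{P})=3/4$, the theorem delivers
\[
\lambda^{(\alpha)}(G)\ \leq\ (3/4)^{1/\alpha}\,(6m)^{1-1/\alpha}.
\]
It remains to put the right-hand side into the form stated in the corollary, which is just bookkeeping with powers of $2$ and $3$: factoring a $6$ out of the second factor gives
\[
(3/4)^{1/\alpha}(6m)^{1-1/\alpha}\ =\ 6\cdot\Bigl(\tfrac{3/4}{6}\Bigr)^{1/\alpha}m^{1-1/\alpha}\ =\ 6\cdot 8^{-1/\alpha}\,m^{1-1/\alpha}\ =\ 3\cdot 2^{\,1-3/\alpha}\,m^{1-1/\alpha},
\]
which is the claimed inequality.

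There is no real obstacle here: the entire content of the corollary is packaged in Theorem~\ref{th6}, and the work consists only in checking that $Mon(F_{7})$ is flat (which follows mechanically from the Fano plane being a covering graph plus Theorem~\ref{th4}), plugging in the known extremal density $3/4$, and simplifying the constants.
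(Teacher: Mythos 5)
Your proposal is correct and follows exactly the route the paper intends: $F_{7}$ is covering, so $Mon(F_{7})$ is hereditary and multiplicative, hence flat by Theorem~\ref{th4}; then Theorem~\ref{th6} with $r=3$, $\pi(Mon(F_{7}))=3/4$ and $e(G)=m$ gives $(3/4)^{1/\alpha}(6m)^{1-1/\alpha}=3\cdot 2^{1-3/\alpha}m^{1-1/\alpha}$, and your constant simplification is accurate.
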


Finally, we shall use (\ref{turin}) to improve the inequality
\[
\lambda^{\left(  \alpha\right)  }\left(  G\right)  \leq\left(  r!e\left(
G\right)  \right)  ^{1-1/\alpha},
\]
given in Lemma 5 of \cite{KLM13}. First, note that Proposition \ref{pro9},
together with Theorem \ref{th6}, gives the following general bounds:

\begin{theorem}
\label{th8}If $G$ is an $r$-graph and $\alpha\geq1,$ then
\[
\lambda^{\left(  \alpha\right)  }\left(  G\right)  \leq\left(  1-\chi\left(
G\right)  ^{-r+1}\right)  ^{1/\alpha}\left(  r!e\left(  G\right)  \right)
^{1-1/\alpha}.
\]
and
\[
\lambda^{\left(  \alpha\right)  }\left(  G\right)  \leq r!\binom
{\underline{\chi}\left(  G\right)  }{r}^{1/\alpha}\underline{\chi}\left(
G\right)  ^{-r/\alpha}e\left(  G\right)  ^{1-1/\alpha}.
\]

\end{theorem}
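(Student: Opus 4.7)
The proof is a direct combination of Proposition \ref{pro9}, Theorem \ref{th4} and Theorem \ref{th6}, so there is no serious obstacle; the plan is mainly to identify, for each $r$-graph $G$, a flat hereditary property to which $G$ belongs with a known value of $\pi$, and then to apply the bound (\ref{turin}).

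For the first inequality, set $\chi=\chi(G)$ and observe that by definition $G\in\mathcal{C}(\chi)$. By Proposition \ref{pro9}, $\mathcal{C}(\chi)$ is a hereditary and multiplicative property of $r$-graphs, so by Theorem \ref{th4} it is flat, and, again by Proposition \ref{pro9}, $\pi(\mathcal{C}(\chi))=1-\chi^{-r+1}$. Applying Theorem \ref{th6} to $G\in\mathcal{C}(\chi)$ one gets
\[
\lambda^{(\alpha)}(G)\leq\pi(\mathcal{C}(\chi))^{1/\alpha}(r!\,e(G))^{1-1/\alpha}=\bigl(1-\chi(G)^{-r+1}\bigr)^{1/\alpha}\bigl(r!\,e(G)\bigr)^{1-1/\alpha},
\]
which is exactly the first claimed bound.

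For the second inequality the plan is identical, with $\underline{\mathcal{C}}$ in place of $\mathcal{C}$. Set $\underline{\chi}=\underline{\chi}(G)$, so that $G\in\underline{\mathcal{C}}(\underline{\chi})$. By Proposition \ref{pro9}, $\underline{\mathcal{C}}(\underline{\chi})$ is hereditary and multiplicative, hence flat by Theorem \ref{th4}, and $\pi(\underline{\mathcal{C}}(\underline{\chi}))=r!\binom{\underline{\chi}}{r}\underline{\chi}^{-r}$. Theorem \ref{th6} then yields
\[
\lambda^{(\alpha)}(G)\leq\Bigl(r!\binom{\underline{\chi}}{r}\underline{\chi}^{-r}\Bigr)^{1/\alpha}\bigl(r!\,e(G)\bigr)^{1-1/\alpha},
\]
and since $(r!)^{1/\alpha}(r!)^{1-1/\alpha}=r!$, this simplifies to the second claimed bound.

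The only point to double-check is that the two specializing ingredients are actually available at this stage of the paper: the hereditary-and-multiplicative status of $\mathcal{C}(p)$ and $\underline{\mathcal{C}}(q)$ (so that Theorem \ref{th4} applies) and the exact values of $\pi$ on these classes; both are supplied by Proposition \ref{pro9}. There is no nontrivial step beyond assembling these pieces, and no case analysis or computation is required.
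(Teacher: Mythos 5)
Your proposal is correct and matches the paper's intended argument: the paper derives Theorem \ref{th8} exactly by combining Proposition \ref{pro9} (hereditary, multiplicative, and hence flat via Theorem \ref{th4}, with the stated values of $\pi$) with the bound of Theorem \ref{th6}, just as you do, including the simplification $(r!)^{1/\alpha}(r!)^{1-1/\alpha}=r!$ in the second inequality.
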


In particular, in view of $\chi\left(  G\right)  \leq v\left(  G\right)
/\left(  r-1\right)  $ and $\underline{\chi}\left(  G\right)  \leq v\left(
G\right)  ,$ we obtain simple bounds in terms of the order and size:

\begin{corollary}
\label{cor3}If $\alpha\geq1$ and $G$ is an $r$-graph of order $n,$ then
\[
\lambda^{\left(  \alpha\right)  }\left(  G\right)  \leq\left(  1-\left(
\frac{n}{r-1}\right)  ^{-r+1}\right)  ^{1/\alpha}\left(  r!e\left(  G\right)
\right)  ^{1-1/\alpha}.
\]
and%
\[
\lambda^{\left(  \alpha\right)  }\left(  G\right)  \leq r!\binom{n}%
{r}^{1/\alpha}n^{-r/\alpha}e\left(  G\right)  ^{1-1/\alpha}%
\]
If $G$ contains at least one edge, then
\[
\lambda^{\left(  \alpha\right)  }\left(  G\right)  <\left(  r!e\left(
G\right)  \right)  ^{1-1/\alpha}.
\]

\end{corollary}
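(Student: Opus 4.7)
The plan is to read off both displayed bounds directly from Theorem~\ref{th8} by substituting the two universal combinatorial estimates $\chi(G)\le n/(r-1)$ and $\underline{\chi}(G)\le n$, and then to prove the strict inequality as an immediate consequence of the second bound.

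First I would justify the two elementary coloring estimates for an $r$-graph $G$ on $n$ vertices: partitioning $V(G)$ into $\lceil n/(r-1)\rceil$ consecutive blocks of size at most $r-1$ produces an edgeless partition (no block can contain an $r$-edge), giving $\chi(G)\le n/(r-1)$ in the real-valued form used in the statement; and the partition of $V(G)$ into singletons is always a weak coloring, hence $\underline{\chi}(G)\le n$. Next, using that the maps $x\mapsto 1-x^{-r+1}$ and $x\mapsto x^{1/\alpha}$ are monotone on $(0,\infty)$, I would plug these estimates into the first and second inequalities of Theorem~\ref{th8}, respectively, and simplify the factor $\binom{n}{\underline{\chi}(G)}\underline{\chi}(G)^{-r}$ in the second case. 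This yields exactly the two displayed inequalities of the corollary.

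For the strict bound I would start from the second displayed inequality and factor out $(r!)^{1-1/\alpha}$:
\[
r!\binom{n}{r}^{1/\alpha}n^{-r/\alpha}e(G)^{1-1/\alpha}=(r!e(G))^{1-1/\alpha}\left(\frac{r!\binom{n}{r}}{n^{r}}\right)^{1/\alpha}.
\]
Since $r!\binom{n}{r}/n^{r}=n(n-1)\cdots(n-r+1)/n^{r}$ is strictly less than $1$ whenever $n\ge r\ge 2$, and the existence of at least one edge in $G$ forces $n\ge r\ge 2$, the last factor is strictly below $1$, which gives the desired strict inequality $\lambda^{(\alpha)}(G)<(r!e(G))^{1-1/\alpha}$.

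I do not foresee any real obstacle here: the whole proof is a routine substitution followed by an elementary arithmetic estimate, with all the substantive content already packaged in Theorem~\ref{th8} (and in Proposition~\ref{pro9}, on which that theorem depends). The only minor bookkeeping point is the ceiling in $\chi(G)\le\lceil n/(r-1)\rceil$, which I would absorb by using the real-valued surrogate $n/(r-1)$ exactly as in the displayed statement.
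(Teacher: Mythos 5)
Your route is the paper's own: Corollary~\ref{cor3} is presented there as an immediate specialization of Theorem~\ref{th8} via $\chi(G)\le n/(r-1)$ and $\underline{\chi}(G)\le n$, and your treatment of the second displayed bound (singleton partition, so $\underline{\chi}(G)\le n$, and monotonicity of $\binom{q}{r}q^{-r}$ in $q$) as well as of the strict inequality (the factor $r!\binom{n}{r}n^{-r}=n(n-1)\cdots(n-r+1)/n^{r}<1$ once $G$ has an edge, hence $n\ge r\ge 2$) is exactly what the paper intends.

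The one step that does not close as written is the first bound. The block partition only gives $\chi(G)\le\lceil n/(r-1)\rceil$, and when $r-1$ does not divide $n$ this can genuinely exceed $n/(r-1)$: for the single edge on $n=r=3$ vertices, $\chi(G)=2>3/2$. Your proposed ``absorption'' of the ceiling runs the wrong way, because $1-x^{-r+1}$ is increasing in $x$, so passing from $\lceil n/(r-1)\rceil$ to the smaller value $n/(r-1)$ strengthens, rather than weakens, the asserted inequality; Theorem~\ref{th8} together with the ceiling bound therefore does not literally yield the first display. (The paper's one-line justification shares this imprecision, since $\chi(G)\le v(G)/(r-1)$ is false in general.) The gap is easy to repair: the first bound follows from the second, because for $n\ge r-1$ one has $r!\binom{n}{r}n^{-r}=\prod_{i=1}^{r-1}\left(1-\frac{i}{n}\right)\le 1-\frac{r-1}{n}\le 1-\left(\frac{r-1}{n}\right)^{r-1}$, while for $n<r$ the graph has no edges and there is nothing to prove; alternatively, bound $\lambda^{(1)}(G)\le r!\binom{n}{r}n^{-r}$ directly by Maclaurin's inequality on the simplex and feed this into Proposition~\ref{pro6}, as in the proof of Theorem~\ref{th6}.
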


\medskip

The above results lead to the natural question: \emph{Is there a flat property
that is not multiplicative?} The answer is yes, there exists a flat property
$\mathcal{P}$ of $2$-graphs that is not multiplicative. Indeed, let
$\mathcal{P}=Her\left(  C_{4}\right)  ,$ that is to say, $\mathcal{P}$ is the
class of all graphs with no induced $4$-cycle. Trivially, all complete graphs
belong to $\mathcal{P}$ and so%
\[
\lambda^{\left(  \alpha\right)  }\left(  \mathcal{P}\right)  =\pi\left(
G\right)  =1\text{ \ \ for all }\alpha\geq1.
\]
However, obviously $\mathcal{P}$ is not multiplicative, as $C_{4}=K_{2}\left(
2,2\right)  $.\medskip

Analyzing the above example, we come up with the following sufficient
condition for flat properties.

\begin{theorem}
\label{th9}Let $\mathcal{F}$ be a set of $r$-graphs each of which is a blow-up
of a covering graph. Then $Her\left(  \mathcal{F}\right)  $ is flat.
\end{theorem}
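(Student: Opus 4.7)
The inequality $\lambda^{(1)}(Her(\mathcal{F})) \ge \pi(Her(\mathcal{F}))$ is given by (\ref{in1.1}), so the plan is to prove the reverse inequality. The strategy will adapt the blow-up argument used for Theorem \ref{th4} (hereditary multiplicative), compensating for the fact that $Her(\mathcal{F})$ need not be multiplicative by invoking the covering hypothesis. I would fix $G \in Her(\mathcal{F})_n$ and an optimal nonnegative weighting $\mathbf{x}$ for $\lambda^{(1)}(G)$, with support $S$. Replacing $G$ by $G[S]$ -- still in $Her(\mathcal{F})$ by heredity and with the same value of $\lambda^{(1)}$ -- I may assume $\mathbf{x}$ has full positive support. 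For a large integer $N$, approximate each $x_i$ by $a_i/N$ with $a_i \in \mathbb{N}$ and $\sum a_i = N$, and form the blow-up $G^{*} = G(a_1, \ldots, a_n)$ of order $N$. Then
\[
e(G^{*}) \;=\; \frac{N^{r}}{r!}\,P_G(\mathbf{x}) \,+\, O(N^{r-1}) \;=\; \frac{\lambda^{(1)}(G)}{r!}\,N^{r} \,+\, o(N^{r}).
\]
If $G^{*} \in Her(\mathcal{F})$, the bound $e(G^{*}) \le ex(Her(\mathcal{F}),N)$ would yield $\lambda^{(1)}(G) \le \pi(Her(\mathcal{F})) + o(1)$ as $N\to\infty$, closing the argument.

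The key step is verifying $G^{*} \in Her(\mathcal{F})$. Suppose some $F = H(\ell_1, \ldots, \ell_h) \in \mathcal{F}$ with $H$ covering embeds as an induced subgraph in $G^{*}$ via $\psi : V(F) \hookrightarrow V(G^{*})$, and write $W_1, \ldots, W_h$ and $U_1, \ldots, U_n$ for the blow-up classes of $F$ and $G^{*}$, respectively. For any $u \in W_j$ and $v \in W_{j'}$ with $j \neq j'$, the covering property of $H$ gives an edge of $H$ containing $\{j, j'\}$, hence an edge of $F$ containing $\{u, v\}$, hence (by induced-ness) an edge of $G^{*}$ containing $\{\psi(u), \psi(v)\}$. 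Since two vertices in the same $U_i$ never lie in a common edge of $G^{*}$, this forces $\psi(u)$ and $\psi(v)$ into distinct $U_i$'s. Composing $\psi$ with the class-projection $V(G^{*}) \to V(G)$ and selecting one representative from each $W_j$, I would thus obtain an injective map $V(H) \to V(G)$ whose image induces $H$ in $G$; a more careful accounting shows that $G$ in fact contains an induced copy of some blow-up $H(m_1, \ldots, m_h)$ of $H$ with $1 \le m_j \le \ell_j$.

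The main obstacle will be turning this into a contradiction with $G \in Her(\mathcal{F})$, since an induced copy of $H(m_1, \ldots, m_h)$ in $G$ need not coincide with an induced copy of $F = H(\ell_1, \ldots, \ell_h)$ itself. I expect to close the gap in two steps: first reduce to the case of finite $\mathcal{F}$ (only members of order at most $N$ can be induced in $G^{*}$), then either (i) exploit that the multiplicities $a_i = \Theta(N)$ are eventually large enough for any induced blow-up of $H$ sitting inside $G$ to lift to an induced copy of $F$ inside $G[S]$, forcing $G \notin Her(\mathcal{F})$; or (ii) apply a supersaturation and induced hypergraph removal argument to erase $o(N^{r})$ edges of $G^{*}$ and thereby kill all induced copies of members of $\mathcal{F}$, while preserving the asymptotic edge density. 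The essential role of the covering hypothesis is exactly in the projection step above: it is what forces the image of each $W_j$ to occupy a disjoint family of $U_i$'s, so that $G^{*}$-level structure transfers back to $G$-level structure; without covering, this bridge would fail outright.
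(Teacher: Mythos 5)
Your reduction to the support $S$ and the projection argument are fine as far as they go (the covering hypothesis on $H$ does force distinct classes $W_j$ into distinct classes $U_i$, and inducedness does transfer to the projected copy), but the step on which the whole plan rests --- ``$G^{*}\in Her(\mathcal{F})$'', or failing that, that $G^{*}$ can be edited into $Her(\mathcal{F})$ by $o(N^{r})$ changes --- is simply false, and the paper's own motivating example refutes it. Take $r=2$, $\mathcal{F}=\{C_{4}\}=\{K_{2}(2,2)\}$, so $\mathcal{P}=Her(C_{4})$. The Lagrangian-extremal graph in $\mathcal{P}_{n}$ is $G=K_{n}$ (Motzkin--Straus gives $\lambda^{(1)}(G)=1-1/\omega(G)$), its unique optimal weighting is uniform with full support, and your $G^{*}=K_{n}(a,\ldots,a)$ is complete multipartite with parts of size $a=\Theta(N/n)$. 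This $G^{*}$ contains $\Theta(N^{4})$ induced copies of $C_{4}$, so it is not in $\mathcal{P}$; moreover it is at edit distance $\Theta(N^{2})$ from $\mathcal{P}$ for fixed $n$ (any editing with $o(N^{2})$ changes leaves two typical nonadjacent vertices in one class with two common nonadjacent neighbours in another class), so your patch (ii) via an induced removal/supersaturation argument cannot work: the removal lemma needs few induced copies, and here there are maximally many. Patch (i) is logically backwards: from the induced copy of the \emph{smaller} blow-up $H(m_{1},\ldots,m_{h})$, $m_{j}\le\ell_{j}$, that your projection produces in $G$, you cannot conclude that $G$ contains the \emph{larger} blow-up $F=H(\ell_{1},\ldots,\ell_{h})$ induced --- indeed $K_{n}$ contains induced $H(1,1)=K_{2}$ but no induced $C_{4}$ --- and the largeness of the $a_{i}$ lives in $G^{*}$, not in $G$, so it cannot upgrade the copy. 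This is not a repairable technicality: the entire point of Theorem \ref{th9}, as opposed to Theorem \ref{th4}, is that $Her(\mathcal{F})$ is typically \emph{not} multiplicative, so no argument that pushes the extremal $G$ through its own blow-up and back into the same property can succeed.

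What a correct argument must do instead is produce, for each covering support graph $K=G[S]\in Her(\mathcal{F})$, \emph{some} graph of order $N$ that does lie in $Her(\mathcal{F})$ and has at least $\bigl(\lambda^{(1)}(K)-o(1)\bigr)N^{r}/r!$ edges; in the $Her(C_{4})$ example this is not the blow-up but, say, $K_{N}$ itself, i.e.\ one must be allowed to add edges inside the blow-up classes (or use an altogether different amplification) and then verify, using that every $F\in\mathcal{F}$ is a blow-up of a covering graph, that no induced member is created. Your outline contains no mechanism of this kind, and the covering hypothesis is used only in the projection step, which, as shown above, yields too little. Note also that the paper itself states Theorem \ref{th9} without proof, so there is no argument there to fall back on; as it stands, your proposal has a genuine gap at its central step.
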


Apparently Theorem \ref{th9} greatly extends the range of flat properties,
however further work is needed to determine the limits of its
applicability.\medskip

The rest of the paper is organized as follows: in Section \ref{prol} we give
general results for the parameters $\lambda^{\left(  \alpha\right)  }\left(
G\right)  $ and in particular for $\lambda\left(  G\right)  .$ In Section
\ref{le} we prove two useful lemmas which are needed in the proof of Theorem
\ref{th4}, but are of separate interest as well. The proofs of the various
statements above are presented in Section \ref{pf}. The paper ends with a
summary discussion and open problems in Section \ref{CR}.

\section{\label{prol}Some properties of $\lambda\left(  G\right)  $ and
$\lambda^{\left(  \alpha\right)  }\left(  G\right)  $}

Below we write $\binom{X}{k}$ for the set of all $k$-subsets of a set $X.$ Let
us recall also the notation $\left(  n\right)  _{r}$ for the falling factorial
$n!/\left(  n-k\right)  !$%
\[
\left(  n\right)  _{r}=\frac{n!}{\left(  n-k\right)  !}=n\left(  n-1\right)
\cdots\left(  n-k+1\right)  .
\]
In our profs we shall use extensively Jensen's and Maclauren's inequalities;
the reader is referred to \cite{HLP88} for ground material.

Most of the basic results about $\lambda^{\left(  \alpha\right)  }\left(
G\right)  $ appear here for the first time, and we hope that they will be
useful to other researchers.

Let $G$ be an $r$-graph of order $n.$ Since its polyform $P_{G}\left(
\mathbf{x}\right)  $ is homogenous, we find that
\begin{equation}
\lambda\left(  G\right)  =\max_{\left\Vert \mathbf{x}\right\Vert _{r}=1}\text{
}P_{G}\left(  \mathbf{x}\right)  =\max_{\mathbf{x}\neq0}\frac{P_{G}\left(
\mathbf{x}\right)  }{\left\Vert \mathbf{x}\right\Vert _{r}^{r}}=\max
_{\mathbf{x}\neq0}P_{G}\left(  \frac{1}{\left\Vert \mathbf{x}\right\Vert _{r}%
}\mathbf{x}\right)  . \label{eiv}%
\end{equation}
We shall call a nonzero real vector $\mathbf{x}=\left(  x_{1},\ldots
,x_{n}\right)  $ an \textbf{eigenvector }to $\lambda\left(  G\right)  $ if
\[
\lambda\left(  G\right)  =P_{G}\left(  \frac{1}{\left\Vert \mathbf{x}%
\right\Vert _{r}}\mathbf{x}\right)  .
\]
Note that relation (\ref{eiv}) holds for $\lambda\left(  G\right)  ,$ but it
is not true for $\lambda^{\left(  \alpha\right)  }\left(  G\right)  $ if
$\alpha\neq r.$ This fact corroborates again the exclusivity of $\lambda
\left(  G\right)  $. However, let us note a useful inequality for the general
$\lambda^{\left(  \alpha\right)  }\left(  G\right)  ,$ which we shall use
later with no explicit reference.

\begin{proposition}
Let $\alpha\geq1.$ If $G$ is an $r$-graph of order $n,$ and $\mathbf{x}%
=\left(  x_{1},\ldots,x_{n}\right)  $ is any real vector, then
\[
P_{G}\left(  \mathbf{x}\right)  \leq\lambda^{\left(  \alpha\right)  }\left(
G\right)  \left\Vert \mathbf{x}\right\Vert _{\alpha}^{r}.
\]

\end{proposition}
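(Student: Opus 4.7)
The plan is to reduce directly to the definition of $\lambda^{(\alpha)}(G)$, exploiting the fact that $P_G(\mathbf{x})$ is a homogeneous polynomial of degree $r$ in $\mathbf{x}$. If $\mathbf{x} = \mathbf{0}$, the inequality is trivial since both sides vanish, so I assume $\mathbf{x} \neq \mathbf{0}$ and set $\mathbf{y} = \mathbf{x}/\|\mathbf{x}\|_\alpha$. Then $\|\mathbf{y}\|_\alpha = 1$, and homogeneity gives
\[
P_G(\mathbf{x}) = \|\mathbf{x}\|_\alpha^{r}\, P_G(\mathbf{y}).
\]
By the definition
\[
\lambda^{(\alpha)}(G) = \max_{\|\mathbf{z}\|_\alpha = 1} P_G(\mathbf{z}),
\]
the vector $\mathbf{y}$ is admissible and hence $P_G(\mathbf{y}) \leq \lambda^{(\alpha)}(G)$. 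Multiplying this inequality by the nonnegative scalar $\|\mathbf{x}\|_\alpha^{r}$ yields the claimed bound.

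There is essentially no obstacle: the only point worth flagging is that the maximum in the definition of $\lambda^{(\alpha)}(G)$ is taken over all real vectors on the $l^{\alpha}$ unit sphere, so it automatically dominates $P_G(\mathbf{y})$ whatever the signs of the coordinates of $\mathbf{y}$ (or of $\mathbf{x}$) turn out to be. This is exactly what makes the one-line normalization argument work uniformly in $\mathbf{x}$, without any need to pass to $|x_i|$ or to split into cases based on sign.
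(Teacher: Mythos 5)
Your normalization argument is correct: since $P_G$ is homogeneous of degree $r$, the vector $\mathbf{y}=\mathbf{x}/\left\Vert \mathbf{x}\right\Vert _{\alpha}$ lies on the unit $l^{\alpha}$ sphere, and multiplying $P_G(\mathbf{y})\leq\lambda^{(\alpha)}(G)$ by the positive scalar $\left\Vert \mathbf{x}\right\Vert _{\alpha}^{r}$ gives the claim, with the trivial case $\mathbf{x}=\mathbf{0}$ handled separately. This is exactly the (implicit) argument the paper relies on, as it states the proposition without proof as an immediate consequence of homogeneity and the definition of $\lambda^{(\alpha)}(G)$.
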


Here are two other obvious facts:

\begin{proposition}
If $G$ is an $r$-graph with at least one edge then $\lambda^{\left(
\alpha\right)  }\left(  G\right)  \geq r!r^{-r/\alpha}.$ If $H$ is a subgraph
of $G,$ then $\lambda^{\left(  \alpha\right)  }\left(  H\right)  \leq
\lambda^{\left(  \alpha\right)  }\left(  G\right)  $.
\end{proposition}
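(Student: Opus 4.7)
The plan is to handle the two statements independently, each by exhibiting (or extending) a suitable test vector and plugging into the definition of $\lambda^{(\alpha)}$.

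For the first claim, let $\{i_1,\ldots,i_r\}$ be any edge of $G$. Define the vector $\mathbf{x}\in\mathbb{R}^n$ by $x_{i_j}=r^{-1/\alpha}$ for $j=1,\ldots,r$ and $x_i=0$ otherwise. A direct check gives $\|\mathbf{x}\|_\alpha^\alpha=r\cdot r^{-1}=1$, so $\mathbf{x}$ is in the feasible set. Since every other edge of $G$ must miss at least one of the chosen vertices (it meets the support of $\mathbf{x}$ in at most $r-1$ positions), all other monomials in $P_G(\mathbf{x})$ vanish. Thus $P_G(\mathbf{x})=r!\,(r^{-1/\alpha})^r=r!\,r^{-r/\alpha}$, and the definition of $\lambda^{(\alpha)}(G)$ as a maximum yields the desired lower bound.

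For the subgraph monotonicity, the cleanest route is to first observe that the max defining $\lambda^{(\alpha)}(H)$ is attained at a nonnegative vector. Indeed, for any $\mathbf{x}$ with $\|\mathbf{x}\|_\alpha=1$, the vector $|\mathbf{x}|=(|x_1|,\ldots,|x_{v(H)}|)$ has the same $\alpha$-norm and satisfies
\[
P_H(\mathbf{x}) = r!\sum_{\{i_1,\ldots,i_r\}\in E(H)} x_{i_1}\cdots x_{i_r} \;\le\; r!\sum_{\{i_1,\ldots,i_r\}\in E(H)} |x_{i_1}|\cdots |x_{i_r}| = P_H(|\mathbf{x}|),
\]
so one may pick a maximizer $\mathbf{x}\ge 0$. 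Now extend $\mathbf{x}$ to $\tilde{\mathbf{x}}\in\mathbb{R}^{v(G)}$ by assigning $0$ to the coordinates in $V(G)\setminus V(H)$. Then $\|\tilde{\mathbf{x}}\|_\alpha=\|\mathbf{x}\|_\alpha=1$, and because $E(H)\subseteq E(G)$ and every missing term $\tilde x_{i_1}\cdots \tilde x_{i_r}$ is nonnegative, we get $P_G(\tilde{\mathbf{x}}) \ge P_H(\mathbf{x}) = \lambda^{(\alpha)}(H)$. Feasibility of $\tilde{\mathbf{x}}$ then gives $\lambda^{(\alpha)}(G)\ge \lambda^{(\alpha)}(H)$.

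The only subtle point is the sign issue in the subgraph step: if one naively extends an arbitrary optimal vector of $H$ by zeros, the extra edges of $G$ may contribute negatively. The reduction to a nonnegative maximizer via the triangle inequality on the monomials removes this obstacle, and the rest is a one-line verification. No deep tool is needed; the whole proposition is a direct consequence of the definition together with the fact that $P_G$ is multilinear and depends monotonically on its arguments once they are nonnegative.
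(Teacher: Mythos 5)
Your proof is correct; the paper states this proposition as one of two ``obvious facts'' without giving any argument, and your details (the test vector supported on a single edge, and the reduction to a nonnegative maximizer --- which the paper itself records as a separate proposition --- followed by extension by zeros) are exactly the intended routine verification. Note that the first part is even slightly cleaner than you state: since the support has exactly $r$ vertices, no other edge can contribute at all, but even if it did, nonnegativity alone would suffice for the lower bound.
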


We state below a handy fact that will be used later with no explicit reference.

\begin{proposition}
For any $\alpha\geq1$ there is a nonnegative vector $\mathbf{x}$ such that
$\left\Vert \mathbf{x}\right\Vert _{\alpha}=1$ and $\lambda^{\left(
\alpha\right)  }\left(  G\right)  =P_{G}\left(  \mathbf{x}\right)  .$
\end{proposition}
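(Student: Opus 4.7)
The plan is to establish the proposition in two short steps: first, produce \emph{some} maximizer on the unit $\ell^{\alpha}$ sphere via a compactness argument; second, take absolute values coordinatewise to get a nonnegative maximizer.

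For the first step, I would note that the set $S_{\alpha}=\{\mathbf{x}\in\mathbb{R}^{n}:\left\Vert \mathbf{x}\right\Vert _{\alpha}=1\}$ is closed in $\mathbb{R}^{n}$ and is contained in the (Euclidean) ball of radius $1$ about the origin, since for every $i$ one has $\left\vert x_{i}\right\vert \leq\left\Vert \mathbf{x}\right\Vert _{\alpha}$ when $\alpha\geq 1$. Hence $S_{\alpha}$ is compact. Because $P_{G}(\mathbf{x})$ is a polynomial, it is continuous on $\mathbb{R}^{n}$, and therefore attains its supremum on $S_{\alpha}$. Thus there exists $\mathbf{y}\in S_{\alpha}$ with $P_{G}(\mathbf{y})=\lambda^{(\alpha)}(G)$.

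For the second step, define $\mathbf{x}=(\left\vert y_{1}\right\vert ,\ldots,\left\vert y_{n}\right\vert )$. Clearly $\mathbf{x}$ is nonnegative and $\left\Vert \mathbf{x}\right\Vert _{\alpha}=\left\Vert \mathbf{y}\right\Vert _{\alpha}=1$. Now use the obvious monomial bound
\[
y_{i_{1}}\cdots y_{i_{r}}\leq\left\vert y_{i_{1}}\right\vert \cdots\left\vert y_{i_{r}}\right\vert
\]
applied termwise to the definition of $P_{G}$. Summing over edges gives $P_{G}(\mathbf{y})\leq P_{G}(\mathbf{x})$. Since $\mathbf{x}\in S_{\alpha}$, the reverse inequality $P_{G}(\mathbf{x})\leq\lambda^{(\alpha)}(G)=P_{G}(\mathbf{y})$ holds by definition of $\lambda^{(\alpha)}(G)$, and equality follows.

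There is really no serious obstacle here; the only delicate point worth double-checking is the compactness of $S_{\alpha}$ for arbitrary real $\alpha\geq 1$, which reduces to the coordinate inequality $\left\vert x_{i}\right\vert \leq\left\Vert \mathbf{x}\right\Vert _{\alpha}$ together with closedness of the level set of the continuous map $\mathbf{x}\mapsto\sum\left\vert x_{i}\right\vert ^{\alpha}$. Once these are in place, the nonnegativity conclusion is immediate from the triangle inequality at the level of individual monomials, and the same argument in fact shows that the maximum of $P_{G}$ over $S_{\alpha}$ coincides with the maximum over the nonnegative orthant part of $S_{\alpha}$.
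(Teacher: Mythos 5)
Your argument is correct and takes essentially the same route as the paper, which simply takes a maximizer (existence being implicit) and passes to $\left(\left\vert x_{1}\right\vert,\ldots,\left\vert x_{n}\right\vert\right)$, exactly as you do after making the compactness step explicit. One small slip: for $\alpha>2$ the sphere $S_{\alpha}$ need not lie in the Euclidean ball of radius $1$, but the coordinate bound $\left\vert x_{i}\right\vert\leq\left\Vert \mathbf{x}\right\Vert _{\alpha}=1$ still places it in the ball of radius $\sqrt{n}$, so boundedness, and hence compactness, holds and the rest of your proof goes through unchanged.
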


Indeed, if $\mathbf{x}=\left(  x_{1},\ldots,x_{n}\right)  $ is a vector such
that $\left\Vert \mathbf{x}\right\Vert _{\alpha}=1$ and%
\begin{equation}
\lambda^{\left(  \alpha\right)  }\left(  G\right)  =P_{G}\left(
\mathbf{x}\right)  =\max_{\left\Vert \mathbf{x}\right\Vert _{\alpha}=1}%
P_{G}\left(  \mathbf{x}\right)  , \label{aeq}%
\end{equation}
then $\lambda^{\left(  \alpha\right)  }\left(  G\right)  =P_{G}\left(
\left\vert x_{1}\right\vert ,\ldots,\left\vert x_{n}\right\vert \right)
.\bigskip$

If $\alpha=r,$ there are stronger statements, which are analogous to
statements in the Perron-Frobenius theory for nonnegative matrices. The
following crucial statement can be deduced from the results of Friedland,
Gauber and Han \cite{FGH11}, but an independent, direct proof has been given
also by Cooper and Dutle in \cite{CoDu11}.

\begin{theorem}
\label{SRHGth}Let $G$ be a connected $r$-graph. Then $\lambda\left(  A\right)
$ has a positive eigenvector, which is unique up to scaling.$\hfill\square$
\end{theorem}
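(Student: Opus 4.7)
The plan is to combine compactness (for existence of a maximizer), Lagrange multipliers (for the eigenvalue equation), a higher-order perturbation controlled by connectedness (for strict positivity), and a propagation argument (for uniqueness).

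First, since $P_G$ is continuous and the $\ell^r$-unit sphere is compact, $P_G$ attains $\lambda(G)$ at some $\mathbf{x}$ with $\|\mathbf{x}\|_r=1$; because $P_G$ has nonnegative coefficients, we may replace $\mathbf{x}$ by $(|x_1|,\ldots,|x_n|)$ and assume $\mathbf{x}\ge 0$. Lagrange multipliers on the constraint $\|\mathbf{x}\|_r^r=1$ then yield the eigenvalue equation
$$(r-1)!\sum_{e\ni i}\prod_{j\in e\setminus\{i\}}x_j=\lambda(G)\,x_i^{r-1}\qquad(i\in V(G)).$$

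Second, I would upgrade $\mathbf{x}\ge 0$ to $\mathbf{x}>0$. Suppose for contradiction that $S=\{i:x_i=0\}$ is proper and nonempty and set $T=V(G)\setminus S$. The equation at $i\in S$ expresses $0$ as a sum of nonnegative terms $\prod_{j\in e\setminus\{i\}}x_j$ over $e\ni i$, so every edge meeting $S$ actually meets it in at least two vertices. For $r=2$ this already forces the neighborhood of $S$ to lie inside $S$, contradicting connectedness. For $r\ge 3$, connectedness gives a crossing edge; choose one, call it $e^*$, minimizing $k:=|e^*\cap S|$, so that $2\le k\le r-1$. Define $\mathbf{x}(\epsilon)$ by $x(\epsilon)_i=\epsilon$ for $i\in e^*\cap S$ and $x(\epsilon)_i=x_i$ otherwise. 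An edge $e$ contributes nonzero to $P_G(\mathbf{x}(\epsilon))$ only when $e\cap S\subseteq e^*\cap S$, and by minimality of $k$ either $e\subset T$ or $e\cap S=e^*\cap S$; therefore
$$P_G(\mathbf{x}(\epsilon))=\lambda(G)+r!\,\epsilon^k C,\qquad \|\mathbf{x}(\epsilon)\|_r^r=1+k\epsilon^r,$$
where $C\ge\prod_{j\in e^*\cap T}x_j>0$. Since $k<r$, the ratio $P_G(\mathbf{x}(\epsilon))/\|\mathbf{x}(\epsilon)\|_r^r$ strictly exceeds $\lambda(G)$ for small $\epsilon>0$, contradicting maximality of $\mathbf{x}$.

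Third, for uniqueness I would take two positive $\lambda(G)$-eigenvectors $\mathbf{x},\mathbf{y}$, set $t=\max_i(x_i/y_i)$, and let $i_0$ attain this maximum, so that $\mathbf{x}\le t\mathbf{y}$ with equality at $i_0$. The eigenvalue equation at $i_0$ applied to $\mathbf{x}$ and to $t\mathbf{y}$ (which is also an eigenvector by homogeneity), together with the strict monotonicity of $\prod_{j\in e\setminus\{i_0\}}$ in its positive arguments, forces $x_j=ty_j$ for every $j$ sharing an edge with $i_0$; iterating along walks in $G$ and invoking connectedness propagates this equality to all of $V(G)$, yielding $\mathbf{x}=t\mathbf{y}$.

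The hard part is the positivity step. Since the eigenvalue equation kills any first-order perturbation at a single $S$-vertex, the perturbation has to activate all vertices of $e^*\cap S$ simultaneously, and the resulting gain is only of order $\epsilon^k$ with $2\le k\le r-1$. The strict inequality $k<r$ is precisely what lets that gain outweigh the $\epsilon^r$ penalty in the denominator, which is exactly the point at which the $\ell^r$ normalization plays the exceptional role highlighted throughout the paper; for $\alpha\ne r$ the analogous perturbation argument would break down.
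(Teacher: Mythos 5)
Your proof is correct, but be aware that the paper itself contains no proof of Theorem \ref{SRHGth}: the statement is quoted from the literature, being deducible from Friedland--Gaubert--Han \cite{FGH11} (nonlinear Perron--Frobenius theory for weakly irreducible nonnegative tensors) and proved directly by Cooper and Dutle \cite{CoDu11}. Your argument is therefore a genuinely different, self-contained and more elementary route, tailored to the symmetric adjacency polyform: existence of a nonnegative maximizer and the Lagrange system (\ref{eequ}) come from compactness; positivity comes from perturbing the maximizer along a crossing edge $e^*$ chosen with $|e^*\cap S|=k\le r-1$ minimal, so that the $\epsilon^k$ gain in $P_G$ beats the $k\epsilon^r$ loss in the constraint --- and you correctly identify that this comparison, via the homogeneous quotient (\ref{eiv}), is exactly where the exceptional normalization $\alpha=r$ enters; uniqueness is the classical maximal-ratio propagation, which does transfer to the multilinear setting because equality of two sums of dominated nonnegative terms forces termwise equality, and equality of products of positive dominated factors forces equality factor by factor, after which connectedness propagates the ratio $t$ along walks. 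What the cited proofs buy is generality (arbitrary weakly irreducible nonnegative tensors, and the additional fact that a positive eigenvector can only occur at the spectral radius); what your proof buys is transparency and independence from tensor Perron--Frobenius machinery.

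Three small points to tighten. First, in the dichotomy ``either $e\subset T$ or $e\cap S=e^*\cap S$'' for contributing edges you must also exclude $e\subseteq S$; this follows not from minimality but from cardinality, since such an edge would satisfy $e\subseteq e^*\cap S$ while $|e|=r>r-1\ge k$. Second, the propagation step needs every vertex to lie in some edge, which holds for connected $G$ with $v(G)\ge2$; the single-vertex, edgeless case of the theorem is trivial and should be dismissed explicitly. Third, your uniqueness argument literally shows that any two positive vectors satisfying (\ref{eequ}) with the value $\lambda(G)$ are proportional; this covers the paper's maximizer-based notion of eigenvector because every nonnegative maximizer satisfies (\ref{eequ}) by the Lagrange argument, and it also covers the algebraic notion used in \cite{CoDu11}.
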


Note, however, that for $r\geq3,$ even if $G$ is a connected $r$-graph,
$\lambda\left(  G\right)  $ may have eigenvectors with negative and positive
entries; for example, if $G$ is a one edge graph, then the $r$-vector $\left(
1,1,\ldots,1\right)  $ is an eigenvector to $\lambda\left(  G\right)  ,$ but
so is the vector $\left(  -1,-1,1,\ldots,1\right)  $ as well.

Let now $\alpha>1$ and let $\mathbf{x}=\left(  x_{1},\ldots,x_{n}\right)  $ be
a nonnegative vector satisfying $\left\Vert \mathbf{x}\right\Vert _{\alpha}=1$
and (\ref{aeq}). Using Lagrange multipliers, we find that there exists $\mu$
such that, for every $k=1,\ldots,n,$
\[
\mu\alpha x_{k}^{\alpha-1}=\left(  r-1\right)  !\sum_{\left\{  k,i_{1}%
,\ldots,i_{r-1}\right\}  \in E\left(  G\right)  }x_{i_{1}}\cdots x_{i_{r-1}}.
\]
Multiplying the $k$'th equation by $x_{k}$ and adding them all, we find that
$\mu\alpha=\lambda^{\left(  \alpha\right)  }\left(  G\right)  $, and so, the
numbers $x_{1},\ldots,x_{n}$ satisfy the $n$ equations
\begin{equation}
\lambda^{\left(  \alpha\right)  }\left(  G\right)  x_{k}^{\alpha-1}=\left(
r-1\right)  !\sum_{\left\{  k,i_{1},\ldots,i_{r-1}\right\}  \in E\left(
G\right)  }x_{i_{1}}\cdots x_{i_{r-1}},\text{ \ \ \ \ }1\leq k\leq n.
\label{eequ}%
\end{equation}
For $\alpha>1$ these equations are a powerful tool in the study of
$\lambda^{\left(  \alpha\right)  }\left(  G\right)  $ and particularly of
$\lambda\left(  G\right)  ,$ but they are not always available for $\alpha=1.$

Next, we shall prove a relation between $\lambda^{\left(  \alpha\right)
}\left(  G\right)  $ and $\lambda^{\left(  \alpha\right)  }\left(  G\left(
k,\ldots,k\right)  \right)  ,$ where $G\left(  k,\ldots,k\right)  $ is a
uniform blow-up of $G.$

\begin{proposition}
\label{pro8}If $G$ is an $r$-graph and $k\geq1$ is an integer, then
\[
\lambda^{\left(  \alpha\right)  }\left(  G\left(  k,\ldots,k\right)  \right)
=k^{r-r/\alpha}\lambda^{\left(  \alpha\right)  }\left(  G\right)  .
\]

\end{proposition}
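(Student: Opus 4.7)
The plan is to prove the two inequalities separately, using a natural ``lifting'' of optimizers from $G$ to the blow-up $H := G(k,\ldots,k)$ for the lower bound, and a symmetrization argument within the vertex classes for the upper bound. Let $U_v$, $v\in V(G)$, denote the blocks of $H$, each of size $k$.

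For the lower bound $\lambda^{(\alpha)}(H)\ge k^{r-r/\alpha}\lambda^{(\alpha)}(G)$, I take a nonnegative optimizer $\mathbf{x}=(x_v)$ for $G$ with $\|\mathbf{x}\|_\alpha=1$ and $P_G(\mathbf{x})=\lambda^{(\alpha)}(G)$, then lift it by setting $y_u = k^{-1/\alpha}x_v$ for every $u\in U_v$. A direct computation gives $\|\mathbf{y}\|_\alpha^\alpha = \sum_v k\cdot k^{-1}x_v^\alpha = 1$. Moreover, every edge $\{v_1,\ldots,v_r\}$ of $G$ expands into exactly $k^r$ edges of $H$, each contributing $k^{-r/\alpha}x_{v_1}\cdots x_{v_r}$, so $P_H(\mathbf{y}) = k^r\cdot k^{-r/\alpha}\,P_G(\mathbf{x}) = k^{r-r/\alpha}\lambda^{(\alpha)}(G)$.

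For the upper bound, I start with a nonnegative optimizer $\mathbf{y}$ for $H$, with $\|\mathbf{y}\|_\alpha=1$ and $P_H(\mathbf{y})=\lambda^{(\alpha)}(H)$, and I symmetrize within each block $U_v$. The crucial structural point is that each edge of $H$ meets any given block $U_v$ in at most one vertex, and the neighbourhoods of all vertices in $U_v$ are identical; hence, with the other blocks fixed, $P_H(\mathbf{y})$ is an affine function of $\sum_{u\in U_v}y_u$ with the same coefficient on each $y_u$, $u\in U_v$. Under the local constraint $\sum_{u\in U_v}y_u^\alpha = s_v$, Hölder's (or the power-mean) inequality gives $\sum_{u\in U_v}y_u \le k^{1-1/\alpha}s_v^{1/\alpha}$, with equality precisely when $y_u=(s_v/k)^{1/\alpha}$ for all $u\in U_v$. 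Replacing $\mathbf{y}$ by its symmetrization, I may thus assume $y_u = z_v$ for all $u\in U_v$; writing $x_v = k^{1/\alpha}z_v$ makes $\|\mathbf{x}\|_\alpha=1$ and yields $P_H(\mathbf{y}) = k^r P_G(\mathbf{z}) = k^{r-r/\alpha}P_G(\mathbf{x})\le k^{r-r/\alpha}\lambda^{(\alpha)}(G)$.

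The main place that needs care is the symmetrization step: one must verify that after the Hölder replacement the objective $P_H$ did not decrease, which rests on the coefficient of each $y_u$, $u\in U_v$, being a common nonnegative quantity (a sum of products over neighbouring blocks). The case $\alpha=1$ deserves a brief separate check since Hölder becomes an identity and the factor $k^{r-r/\alpha}$ collapses to $1$; but then $P_H(\mathbf{y})$ depends on $\mathbf{y}$ only through the block sums $z_v=\sum_{u\in U_v}y_u$, and $\|\mathbf{y}\|_1 = \sum_v z_v$, so setting $x_v=z_v$ directly yields $P_H(\mathbf{y})=P_G(\mathbf{x})\le\lambda^{(1)}(G)$, matching the lower bound.
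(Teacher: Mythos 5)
Your proposal is correct and follows essentially the paper's own argument: the lower bound is the identical $k^{-1/\alpha}$-lifting of an optimizer of $G$, and your upper bound rests on the same $\ell^{1}$--$\ell^{\alpha}$ comparison within each block, which the paper applies directly (rewriting $P_{G(k,\ldots,k)}$ as a sum over $E(G)$ of products of block sums, bounding each block sum by Jensen, and comparing with $P_G$ of the vector of block $\alpha$-norms), while you package it as a within-block symmetrization of the optimizer. The extra points you flag (common nonnegative coefficients in each block, and the $\alpha=1$ case) are handled correctly but are not needed in the paper's formulation, which never modifies the optimizer.
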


\begin{proof}
By definition, the vertex set $V\left(  G\left(  k,\ldots,k\right)  \right)  $
of $G\left(  k,\ldots,k\right)  $ can be partitioned into $n$ disjoint sets
$U_{1},\ldots,U_{n}$ each consisting of $k$ vertices. Also, if $\left\{
i_{1},\ldots,i_{r}\right\}  \in E\left(  G\right)  ,$ then $\left\{
j_{1},\ldots,j_{r}\right\}  \in$ $E\left(  G\left(  k,\ldots,k\right)
\right)  $ for every $j_{1}\in U_{i_{1}},j_{2}\in U_{i_{2}},\ldots,j_{r}\in
U_{i_{r}}.$ We shall prove first that
\begin{equation}
\lambda^{\left(  \alpha\right)  }\left(  G\left(  k,\ldots,k\right)  \right)
\geq k^{r-r/\alpha}\lambda^{\left(  \alpha\right)  }\left(  G\right)
.\label{in6}%
\end{equation}
Let $\mathbf{x}=\left(  x_{1},\ldots,x_{n}\right)  $ be a nonnegative vector
such that $\left\Vert \mathbf{x}\right\Vert _{\alpha}=1$ and \ $\lambda
^{\left(  \alpha\right)  }\left(  G\right)  =P_{G}\left(  \mathbf{x}\right)
.$ For every $i\in\left[  n\right]  $ and every $j\in U_{i},$ set
\[
y_{j}=\frac{1}{k^{1/\alpha}}x_{i}%
\]
The vector $\mathbf{y}=\left(  y_{1},\ldots,y_{nk}\right)  $ satisfies
$\left\Vert \mathbf{y}\right\Vert _{\alpha}=1,$ and therefore,%
\[
\lambda^{\left(  \alpha\right)  }\left(  G\left(  k,\ldots,k\right)  \right)
\geq P_{G\left(  k,\ldots,k\right)  }\left(  \mathbf{y}\right)  =\frac
{1}{k^{r/\alpha}}k^{r}P_{G}\left(  \mathbf{x}\right)  =k^{r-r/\alpha}%
\lambda^{\left(  \alpha\right)  }\left(  G\right)  ,
\]
proving (\ref{in6}). To complete the proof of the proposition, we shall show
that%
\begin{equation}
\lambda^{\left(  \alpha\right)  }\left(  G\left(  k,\ldots,k\right)  \right)
\leq k^{r-r/\alpha}\lambda^{\left(  \alpha\right)  }\left(  G\right)
.\label{in5}%
\end{equation}
Let $\mathbf{x}=\left(  x_{1},\ldots,x_{nk}\right)  $ be a nonnegative
$kn$-vector such that $\left\Vert \mathbf{x}\right\Vert _{\alpha}=1$ and
\[
\lambda^{\left(  \alpha\right)  }\left(  G\left(  k,\ldots,k\right)  \right)
=P_{G\left(  k,\ldots,k\right)  }\left(  \mathbf{x}\right)  .
\]
By definition,%
\begin{align*}
P_{G\left(  k,\ldots,k\right)  }\left(  \mathbf{x}\right)   &  =r!\sum
_{\left\{  i_{1},\ldots,i_{r}\right\}  \in E\left(  G\left(  k,\ldots
,k\right)  \right)  }x_{i_{1}}x_{i_{2}}\cdots x_{i_{r}}\\
&  =r!\sum_{\left\{  i_{1},\ldots,i_{r}\right\}  \in E\left(  G\right)
}\left(  \sum_{j\in U_{i_{1}}}x_{j}\right)  \left(  \sum_{j\in U_{i_{2}}}%
x_{j}\right)  \cdots\left(  \sum_{j\in U_{i_{r}}}x_{j}\right)
\end{align*}
Next, for every $s\in\left[  n\right]  ,$ using Jensen's inequality, we see
that%
\[
\sum_{j\in U_{s}}x_{j}\leq k^{1-1/\alpha}\left(  \sum_{j\in U_{s}}%
x_{j}^{\alpha}\right)  ^{1/\alpha}.
\]
Now, setting for every $s\in\left[  n\right]  ,$%
\[
y_{s}=\left(  \sum_{j\in U_{s}}x_{j}^{\alpha}\right)  ^{1/\alpha},
\]
we find a vector $\mathbf{y}=\left(  y_{1},\ldots,y_{n}\right)  $ with
$\left\Vert \mathbf{y}\right\Vert _{\alpha}=1.$ Also,
\begin{align*}
\lambda^{\left(  \alpha\right)  }\left(  G\left(  k,\ldots,k\right)  \right)
&  =P_{G\left(  k,\ldots,k\right)  }\left(  \mathbf{x}\right)  \\
&  =r!\sum_{\left\{  i_{1},\ldots,i_{r}\right\}  \in E\left(  G\right)
}\left(  \sum_{j\in U_{i_{1}}}x_{j}\right)  \left(  \sum_{j\in U_{i_{2}}}%
x_{j}\right)  \cdots\left(  \sum_{j\in U_{i_{r}}}x_{j}\right)  \\
&  \leq r!k^{r-r/\alpha}\sum_{\left\{  i_{1},\ldots,i_{r}\right\}  \in
E\left(  G\right)  }y_{i_{1}y_{i_{2}}}\ldots y_{i_{r}}=k^{r-r/\alpha}%
P_{G}\left(  \mathbf{y}\right)  \\
&  \leq k^{r-r/\alpha}\lambda^{\left(  \alpha\right)  }\left(  G\right)  .
\end{align*}
This completes the proof of (\ref{in5}), and with (\ref{in6}), also the proof
of Proposition \ref{pro8}.
\end{proof}

\medskip

Finally, we give a perturbation bound on $\lambda^{\left(  \alpha\right)
}\left(  G\right)  ,$ which is used to estimate how much changes
$\lambda^{\left(  \alpha\right)  }\left(  G\right)  ,$ when edges of $G$ are changed.

\begin{proposition}
\label{pro10}Let $\alpha\leq1,$ $k\geq1$ and $G_{1}$ and $G_{2}$ be $r$-graphs
on the same vertex set. If $G_{1}$ and $G_{2}$ differ in at most $k$ edges,
then%
\[
\left\vert \lambda^{\left(  \alpha\right)  }\left(  G_{1}\right)
-\lambda^{\left(  \alpha\right)  }\left(  G_{2}\right)  \right\vert
\leq\left(  r!k\right)  ^{1-1/\alpha}.
\]

\end{proposition}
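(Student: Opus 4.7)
The plan is to use a single extremal vector for the graph with the larger eigenvalue and compare polyform values, then bound the discrepancy using the edges of the symmetric difference via Proposition \ref{pro1}. Note that the hypothesis is almost surely meant to be $\alpha \geq 1$, since the conclusion $(r!k)^{1-1/\alpha}$ is what fits with the earlier material.

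Without loss of generality, suppose $\lambda^{(\alpha)}(G_1) \geq \lambda^{(\alpha)}(G_2)$. I would first pick a nonnegative vector $\mathbf{x}$ with $\|\mathbf{x}\|_\alpha = 1$ such that $P_{G_1}(\mathbf{x}) = \lambda^{(\alpha)}(G_1)$, available from the third proposition of Section \ref{prol}. The key observation is that, splitting the edge set into the symmetric pieces, one has
\[
P_{G_1}(\mathbf{x}) - P_{G_2}(\mathbf{x}) = P_{G_1 \setminus G_2}(\mathbf{x}) - P_{G_2 \setminus G_1}(\mathbf{x}),
\]
where $G_1 \setminus G_2$ and $G_2 \setminus G_1$ denote the $r$-graphs on $V(G_1) = V(G_2)$ consisting of the edges in one graph but not the other. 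Since $\mathbf{x} \geq 0$ and $r!$ is positive, both $P_{G_1 \setminus G_2}(\mathbf{x})$ and $P_{G_2 \setminus G_1}(\mathbf{x})$ are nonnegative, so dropping the latter gives
\[
P_{G_1}(\mathbf{x}) - P_{G_2}(\mathbf{x}) \leq P_{G_1 \setminus G_2}(\mathbf{x}).
\]

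Now I would apply Proposition \ref{pro1} to the $r$-graph $G_1 \setminus G_2$, which has at most $k$ edges. Since $\|\mathbf{x}\|_\alpha = 1$, one obtains
\[
P_{G_1 \setminus G_2}(\mathbf{x}) \leq \lambda^{(\alpha)}(G_1 \setminus G_2) \leq \bigl(r! \, e(G_1 \setminus G_2)\bigr)^{1-1/\alpha} \leq (r!k)^{1-1/\alpha}.
\]
Combining with the trivial lower bound $\lambda^{(\alpha)}(G_2) \geq P_{G_2}(\mathbf{x})$ gives
\[
\lambda^{(\alpha)}(G_1) - \lambda^{(\alpha)}(G_2) \leq P_{G_1}(\mathbf{x}) - P_{G_2}(\mathbf{x}) \leq (r!k)^{1-1/\alpha},
\]
and the symmetric argument (swapping the roles of $G_1$ and $G_2$) handles the other case, yielding the absolute value bound.

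There is no real obstacle here, as the argument is a clean perturbation estimate. The one subtle point worth checking in the write-up is the validity of restricting to a nonnegative extremizer $\mathbf{x}$, which justifies dropping the $P_{G_2 \setminus G_1}(\mathbf{x})$ term as nonnegative; this is precisely the reason for selecting a nonnegative extremal vector at the outset rather than an arbitrary one. Everything else reduces to invoking Proposition \ref{pro1}.
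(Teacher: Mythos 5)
Your proof is correct and follows essentially the same route as the paper: both bound $\lambda^{(\alpha)}(G_1)-\lambda^{(\alpha)}(G_2)$ by the contribution of the at most $k$ edges in $E(G_1)\setminus E(G_2)$ and then invoke the upper bound $\lambda^{(\alpha)}(H)\leq\left(r!e(H)\right)^{1-1/\alpha}$ of Proposition \ref{pro1}; the paper phrases this via the intersection graph $G_{12}$ and monotonicity of $\lambda^{(\alpha)}$, which is just the packaged form of your explicit extremal-vector comparison. You are also right that the hypothesis should read $\alpha\geq1$, consistent with the rest of the paper.
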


\begin{proof}
Let $V=V\left(  G_{1}\right)  =V\left(  G_{2}\right)  $ and write $G_{12}$ for
the $r$-graph with $V\left(  G_{12}\right)  =V$ and $E\left(  G_{12}\right)
=E\left(  G_{1}\right)  \cap E\left(  G_{2}\right)  .$ We may and shall assume
that $\lambda^{\left(  \alpha\right)  }\left(  G_{1}\right)  \geq
\lambda^{\left(  \alpha\right)  }\left(  G_{2}\right)  .$ Write $G_{3}$ for
the $r$-graph with $V\left(  G_{3}\right)  =V$ and $E\left(  G_{3}\right)
=E\left(  G_{1}\right)  \backslash E\left(  G_{2}\right)  .$ In view of
$G_{12}\subset G_{2},$ we have%
\begin{align*}
0  &  \leq\lambda^{\left(  \alpha\right)  }\left(  G_{1}\right)
-\lambda^{\left(  \alpha\right)  }\left(  G_{2}\right)  =\lambda^{\left(
\alpha\right)  }\left(  G_{1}\right)  -\lambda^{\left(  \alpha\right)
}\left(  G_{12}\right)  -\left(  \lambda^{\left(  \alpha\right)  }\left(
G_{2}\right)  -\lambda^{\left(  \alpha\right)  }\left(  G_{12}\right)  \right)
\\
&  \leq\lambda^{\left(  \alpha\right)  }\left(  G_{1}\right)  -\lambda
^{\left(  \alpha\right)  }\left(  G_{12}\right)  =\lambda^{\left(
\alpha\right)  }\left(  G_{3}\right)  \leq\left(  r!e\left(  G_{3}\right)
\right)  ^{1-1/\alpha}\\
&  \leq\left(  r!k\right)  ^{1-1/\alpha},
\end{align*}
proving Proposition \ref{pro10}.
\end{proof}

\section{\label{le}Two lemmas about critical vectors to $\lambda^{\left(
\alpha\right)  }\left(  G\right)  $}

A useful result in spectral extremal theory for $2$-graphs is the following
bound from \cite{Nik09}:

\emph{Let }$G$\emph{ be an }$2$\emph{-graph with minimum degree }$\delta
,$\emph{ and let }$x=\left(  x_{1},\ldots,x_{n}\right)  $\emph{ be a
nonnegative eigenvector to }$\lambda\left(  G\right)  $\emph{ with
}$\left\Vert \mathbf{x}\right\Vert _{2}=1.$\emph{ Then the value }%
$x=\min\left\{  x_{1},\ldots,x_{n}\right\}  $\emph{ satisfies}%
\begin{equation}
x^{2}\left(  \lambda\left(  G\right)  ^{2}+\delta n-\delta^{2}\right)
\leq\delta\label{minx}%
\end{equation}

The bound (\ref{minx}) is exact for many different graphs, and as explained in
\cite{Nik11}, it has been crucial in proving upper bounds on $\lambda\left(
G\right)  $ by induction on the number of vertices of $G.$ Very likely, a
similar bound for hypergraphs would be useful as well. Below we state and
prove such a result; despite its awkward form, for $r=\alpha=2$ it yields
precisely (\ref{minx}); moreover, it is crucial for the proof of Theorem
\ref{th3}.

\begin{lemma}
\label{le1}Let $1\leq\alpha\leq r,$ and let $G$ be an $r$-graph of order $n,$
with minimum degree $\delta$ and with $\lambda^{\left(  \alpha\right)
}\left(  G\right)  =\lambda.$ Let $\mathbf{x}=\left(  x_{1},\ldots
,x_{n}\right)  $ be a nonnegative vector such that $\left\Vert \mathbf{x}%
\right\Vert _{\alpha}=1$ and $\lambda=P_{G}\left(  \mathbf{x}\right)  .$ Then
the value $x=\min\left\{  x_{1},\ldots,x_{n}\right\}  $ satisfies%
\[
\left(  \left(  \frac{\lambda n^{r/\alpha-1}}{\left(  r-1\right)  !}\right)
^{\alpha}-\delta^{\alpha}\right)  x^{\alpha\left(  r-1\right)  }\leq
\binom{n-1}{r-1}\delta^{\alpha-1}\left(  \frac{\left(  1-x^{\alpha}\right)
^{r-1}}{\left(  n-1\right)  ^{r-1}}-x^{\alpha\left(  r-1\right)  }\right)  .
\]

\end{lemma}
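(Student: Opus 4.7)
The plan is to apply the Lagrange equations (\ref{eequ}) at a vertex of \emph{minimum degree} (not at the vertex where $x$ is attained), then combine Jensen's inequality with Maclaurin's inequality on the complementary symmetric sum, and finally convert the resulting power of $x$ to the target exponent via the a priori bound $x\leq n^{-1/\alpha}$.

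First I would fix a vertex $u\in V(G)$ with $d_u=\delta$; the edge cases $\delta=0$ or $x=0$ make the stated inequality trivial, and for $\alpha=1$ the relation (\ref{eequ}) still holds (as the interior KKT condition) once $x>0$, since then every $x_j$ is positive. So assume $\delta\geq 1$ and $x>0$. Applying (\ref{eequ}) at $u$, raising to the $\alpha$th power, and using Jensen's inequality for the convex function $t\mapsto t^\alpha$ ($\alpha\geq 1$) on the $\delta$ nonnegative summands yields
\[
\left(\frac{\lambda\, x_u^{\alpha-1}}{(r-1)!}\right)^{\!\alpha}=\left(\sum_{\{u,i_1,\ldots,i_{r-1}\}\in E}x_{i_1}\cdots x_{i_{r-1}}\right)^{\!\alpha}\leq\delta^{\alpha-1}\sum_{\{u,i_1,\ldots,i_{r-1}\}\in E}x_{i_1}^{\alpha}\cdots x_{i_{r-1}}^{\alpha}.
\]

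Denote the sum on the right by $T_u$, and write $T_u=E_u-U_u$, where $E_u:=e_{r-1}(x_j^\alpha:j\neq u)$ is the full $(r-1)$-th elementary symmetric polynomial on $V\setminus\{u\}$ and $U_u$ is the sum of $\prod x_i^\alpha$ over the $\binom{n-1}{r-1}-\delta$ non-edge $(r-1)$-subsets. Maclaurin's inequality applied to the $(n-1)$ values $x_j^\alpha$, whose sum is $1-x_u^\alpha\leq 1-x^\alpha$, gives $E_u\leq\binom{n-1}{r-1}(1-x^\alpha)^{r-1}/(n-1)^{r-1}$, while the pointwise bound $x_i\geq x$ gives $U_u\geq(\binom{n-1}{r-1}-\delta)\,x^{\alpha(r-1)}$. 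Subtracting,
\[
T_u\leq\binom{n-1}{r-1}\left(\frac{(1-x^\alpha)^{r-1}}{(n-1)^{r-1}}-x^{\alpha(r-1)}\right)+\delta\,x^{\alpha(r-1)}.
\]
Chaining the two estimates produces an inequality with $\lambda^{\alpha}\,x_u^{\alpha(\alpha-1)}$ on the left. To convert $x_u^{\alpha(\alpha-1)}$ to $n^{r-\alpha}\,x^{\alpha(r-1)}$, I would use $x_u\geq x$ (valid since $\alpha(\alpha-1)\geq 0$) and then the normalization $\sum x_j^\alpha=1$, which together with $x=\min_j x_j$ forces $x\leq n^{-1/\alpha}$; since $1\leq\alpha\leq r$, this yields $x^{\alpha(\alpha-1)}=x^{\alpha(r-1)}\,x^{-\alpha(r-\alpha)}\geq n^{r-\alpha}\,x^{\alpha(r-1)}$. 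Substituting and transposing $\delta^{\alpha}x^{\alpha(r-1)}$ to the left gives exactly the claimed bound.

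The main technical pitfall is the choice of $u$: picking the minimum-$x$ vertex $k$ instead produces the analogous chain only with $d_k$ in place of $\delta$, and because the expression $d^{\alpha-1}\binom{n-1}{r-1}(\cdots)+d^{\alpha}x^{\alpha(r-1)}$ is increasing in $d$, substituting $d_k\geq\delta$ yields a bound \emph{weaker} than the lemma. Working at a minimum-degree vertex and deferring both $x_u\geq x$ and $x\leq n^{-1/\alpha}$ until the very end is what simultaneously supplies the factor $\delta^{\alpha-1}$ and the power of $n$ needed to assemble $(\lambda n^{r/\alpha-1}/(r-1)!)^{\alpha}$ on the left-hand side.
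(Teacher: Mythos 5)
Your proposal is correct and follows essentially the same route as the paper's proof: apply the Lagrange equations at a minimum-degree vertex, use Jensen's inequality to pass to $\alpha$-th powers, bound the resulting symmetric sum from above via Maclaurin's inequality minus the non-edge contribution, and finally convert the exponent using $x\leq n^{-1/\alpha}$ together with $\alpha\leq r$. The only (harmless) differences are that you defer the substitutions $x_u\geq x$ and $x\leq n^{-1/\alpha}$ to the very end and you explicitly treat the degenerate cases $\alpha=1$, $\delta=0$, $x=0$, which the paper passes over silently.
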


\begin{proof}
Set for short $V=V\left(  G\right)  $ and let $k\in V$ be a vertex of degree
$\delta$. The equation (\ref{eequ}) for the vertex $k$ implies that%
\[
\lambda x^{\alpha-1}\leq\lambda x_{k}^{\alpha-1}=\left(  r-1\right)
!\sum_{\left\{  k,i_{1},\ldots,i_{r-1}\right\}  \in E\left(  G\right)
}x_{i_{1}}\ldots x_{i_{r-1}}.
\]
Now, dividing by $\left(  r-1\right)  !$ and applying Jensen's inequality to
the right-hand side, we find that
\begin{equation}
\left(  \frac{\lambda x^{\alpha-1}}{\left(  r-1\right)  !}\right)  ^{\alpha
}\leq\delta^{\alpha-1}\sum_{\left\{  k,i_{1},\ldots,i_{r-1}\right\}  \in
E\left(  G\right)  }x_{i_{1}}^{\alpha}\ldots x_{i_{r-1}}^{\alpha}.\label{in4}%
\end{equation}
Our next goal is to bound the quantity $\sum_{\left\{  k,i_{1},\ldots
,i_{r-1}\right\}  \in E\left(  G\right)  }x_{i_{1}}^{r}\ldots x_{i_{r-1}}^{r}$
from above$.$ First, note that
\begin{align}
&  \sum_{\left\{  k,i_{1},\ldots,i_{r-1}\right\}  \in E\left(  G\right)
}x_{i_{1}}^{\alpha}\ldots x_{i_{r-1}}^{\alpha}\nonumber\\
&  =\sum_{\left\{  i_{1},\ldots,i_{r-1}\right\}  \in\binom{V\backslash\left\{
v_{k}\right\}  }{r-1}}x_{i_{1}}^{\alpha}\ldots x_{i_{r-1}}^{\alpha}-\left(
\sum_{\left\{  i_{1},\ldots,i_{r-1}\right\}  \in\binom{V\backslash\left\{
v_{k}\right\}  }{r-1}\text{ and }\left\{  k,i_{1},\ldots,i_{r-1}\right\}
\notin E\left(  G\right)  }x_{i_{1}}^{\alpha}\ldots x_{i_{r-1}}^{\alpha
}\right)  \nonumber\\
&  \leq\sum_{\left\{  i_{1},\ldots,i_{r-1}\right\}  \in\binom{V\backslash
\left\{  v_{k}\right\}  }{r-1}}x_{i_{1}}^{\alpha}\ldots x_{i_{r-1}}^{\alpha
}-\left(  \sum_{\left\{  i_{1},\ldots,i_{r-1}\right\}  \in\binom
{V\backslash\left\{  v_{k}\right\}  }{r-1}\text{ and }\left\{  k,i_{1}%
,\ldots,i_{r-1}\right\}  \notin E\left(  G\right)  }x^{\alpha\left(
r-1\right)  }\right)  \nonumber\\
&  =\sum_{\left\{  i_{1},\ldots,i_{r-1}\right\}  \in\binom{V\backslash\left\{
v_{k}\right\}  }{r-1}}x_{i_{1}}^{\alpha}\ldots x_{i_{r-1}}^{\alpha}-\left(
\binom{n-1}{r-1}-\delta\right)  x^{\alpha\left(  r-1\right)  }.\label{in7}%
\end{align}
Next, applying Maclauren's inequality for the $\left(  r-1\right)  $'th
symmetric function of the variables $x_{i}^{\alpha},$ $i\in V\backslash
\left\{  v_{k}\right\}  $, we find that%
\begin{align*}
\frac{1}{\binom{n-1}{r-1}}\sum_{\left\{  i_{1},\ldots,i_{r-1}\right\}
\in\binom{V\backslash\left\{  v_{k}\right\}  }{r-1}}x_{i_{1}}^{\alpha}\ldots
x_{i_{r-1}}^{\alpha} &  \leq\left(  \frac{1}{n-1}\sum_{i\in V\backslash
\left\{  k\right\}  }x_{i}^{\alpha}\right)  ^{r-1}=\frac{1}{\left(
n-1\right)  ^{r-1}}\left(  1-x_{k}^{\alpha}\right)  ^{r-1}\\
&  \leq\frac{1}{\left(  n-1\right)  ^{r-1}}\left(  1-x^{\alpha}\right)
^{r-1}.
\end{align*}
Hence, replacing in (\ref{in7}), we obtain the desired bound
\[
\sum_{\left\{  k,i_{1},\ldots,i_{r-1}\right\}  \in E\left(  G\right)
}x_{i_{1}}^{\alpha}\ldots x_{i_{r-1}}^{\alpha}\leq\frac{\binom{n-1}{r-1}%
}{\left(  n-1\right)  ^{r-1}}\left(  1-x^{\alpha}\right)  ^{r-1}-\left(
\binom{n-1}{r-1}-\delta\right)  x^{\alpha\left(  r-1\right)  }.
\]
Returning back to (\ref{in4}), we see that
\[
\left(  \frac{\lambda}{\left(  r-1\right)  !}\right)  ^{\alpha}x^{\left(
\alpha-1\right)  \alpha}\leq\binom{n-1}{r-1}\delta^{\alpha-1}\left(
\frac{\left(  1-x^{\alpha}\right)  ^{r-1}}{\left(  n-1\right)  ^{r-1}%
}-x^{\alpha\left(  r-1\right)  }\right)  +\delta^{\alpha}x^{\alpha\left(
r-1\right)  }.
\]

Since $\alpha\leq r$ and $x\leq n^{-1/\alpha},$ we see that $x^{\alpha-r}\geq
n^{-\left(  \alpha-r\right)  /\alpha}=n^{r/\alpha-1}$ and therefore,%
\[
\frac{\lambda n^{r/\alpha-1}}{\left(  r-1\right)  !}x^{r-1}\leq\frac{\lambda
}{\left(  r-1\right)  !}x^{\alpha-1}.
\]
Hence,%
\begin{align*}
\left(  \frac{\lambda n^{r/\alpha-1}}{\left(  r-1\right)  !}\right)  ^{\alpha
}x^{\alpha\left(  r-1\right)  }  & \leq\left(  \frac{\lambda}{\left(
r-1\right)  !}\right)  ^{\alpha}x^{\left(  \alpha-1\right)  \alpha}\\
& \leq\binom{n-1}{r-1}\delta^{\alpha-1}\left(  \frac{\left(  1-x^{\alpha
}\right)  ^{r-1}}{\left(  n-1\right)  ^{r-1}}-x^{\alpha\left(  r-1\right)
}\right)  +\delta^{\alpha}x^{\alpha\left(  r-1\right)  },
\end{align*}
and so,%
\[
\left(  \left(  \frac{\lambda n^{r/\alpha-1}}{\left(  r-1\right)  !}\right)
^{\alpha}-\delta^{\alpha}\right)  x^{\alpha\left(  r-1\right)  }\leq
\binom{n-1}{r-1}\delta^{\alpha-1}\left(  \frac{\left(  1-x^{\alpha}\right)
^{r-1}}{\left(  n-1\right)  ^{r-1}}-x^{\alpha\left(  r-1\right)  }\right)  ,
\]
completing the proof of Lemma \ref{le1}.
\end{proof}

\bigskip

If $G$ is an $r$-graph of order $n,$ with minimum degree $\delta,$ from
(\ref{gin}) we see that
\begin{equation}
\frac{\lambda^{\left(  \alpha\right)  }\left(  G\right)  n^{r/\alpha-1}%
}{\left(  r-1\right)  !}\geq\frac{re\left(  G\right)  }{n}\geq\delta.
\label{gin1}%
\end{equation}

In the proof of Lemma \ref{le2}, we shall need the following simple
consequence of this fact.

\begin{proposition}
If $\alpha\geq1,$ $r\geq2,$ and $G$ is an $r$-graph of order $n$, with minimum
degree $\delta$, then
\begin{equation}
\left(  \frac{\lambda^{\left(  \alpha\right)  }\left(  G\right)
n^{r/\alpha-1}}{\left(  r-1\right)  !}\right)  ^{\alpha}-\delta^{\alpha}%
\geq\left(  \frac{\lambda^{\left(  \alpha\right)  }\left(  G\right)
n^{r/\alpha-1}}{\left(  r-1\right)  !}-\delta\right)  \alpha\delta^{\alpha-1}
\label{in13}%
\end{equation}

\end{proposition}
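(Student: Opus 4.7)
The plan is to reduce this to a standard convexity inequality. Let me abbreviate
\[
A := \frac{\lambda^{(\alpha)}(G)\,n^{r/\alpha-1}}{(r-1)!},
\]
so that the claim becomes $A^{\alpha} - \delta^{\alpha} \geq \alpha\,\delta^{\alpha-1}(A - \delta)$. The one nontrivial input I would use is inequality (\ref{gin1}) from just above, which gives $A \geq \delta \geq 0$. With that, the inequality is purely an analytic fact about nonnegative reals and the exponent $\alpha \geq 1$.

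Next, I would invoke convexity of $f(t) = t^{\alpha}$ on $[0,\infty)$, which holds since $\alpha \geq 1$. The tangent-line inequality at $t = \delta$ says $f(A) \geq f(\delta) + f'(\delta)(A - \delta)$, i.e.
\[
A^{\alpha} \geq \delta^{\alpha} + \alpha\,\delta^{\alpha-1}(A - \delta),
\]
which rearranges to exactly (\ref{in13}). Equivalently, one can argue by the mean value theorem: $A^{\alpha} - \delta^{\alpha} = \alpha\,\xi^{\alpha-1}(A-\delta)$ for some $\xi \in [\delta, A]$, and $\xi^{\alpha-1} \geq \delta^{\alpha-1}$ because $\alpha - 1 \geq 0$ and $\xi \geq \delta$. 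A third route, if preferred, is to set $g(t) = t^{\alpha} - \delta^{\alpha} - \alpha\,\delta^{\alpha-1}(t - \delta)$ and observe $g(\delta)=0$ together with $g'(t) = \alpha(t^{\alpha-1} - \delta^{\alpha-1}) \geq 0$ for $t \geq \delta$.

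Finally, I would dispatch the degenerate cases. If $\delta = 0$, the right-hand side of (\ref{in13}) vanishes (using $0^{\alpha-1}\cdot 0 = 0$ when $\alpha > 1$, and the $\alpha = 1$ case below), and the left-hand side is $A^{\alpha} \geq 0$. If $\alpha = 1$, both sides collapse to $A - \delta$, giving equality. Outside these corners, the inequality is strict as soon as $A > \delta$ and $\alpha > 1$, by strict convexity.

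There is essentially no obstacle: the only subtle point is making sure that (\ref{gin1}) applies in the stated generality so that $A \geq \delta$ is available, after which the rest is a one-line convexity estimate.
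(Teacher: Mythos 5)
Your proof is correct and essentially the same as the paper's: the paper also reduces to $b\geq\delta$ via (\ref{gin1}) and then applies Bernoulli's inequality to $\bigl(1+\tfrac{b-\delta}{\delta}\bigr)^{\alpha}$, which is just your tangent-line/convexity estimate in disguise. Your explicit treatment of $\delta=0$ is a harmless bit of extra care (the paper's factoring out of $\delta^{\alpha}$ tacitly assumes $\delta>0$), but otherwise the two arguments coincide.
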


\begin{proof}
Indeed, setting for short
\[
\frac{\lambda^{\left(  \alpha\right)  }\left(  G\right)  n^{r/\alpha-1}%
}{\left(  r-1\right)  !}=b,
\]
inequality (\ref{in13}) is equivalent to
\[
b^{\alpha}-\delta^{\alpha}\geq\left(  b-\delta\right)  \alpha\delta
^{\alpha-1\text{ }}.
\]
But (\ref{gin1}) implies that $b\geq\delta,$ and using Bernoulli's inequality,
we find that
\[
b^{\alpha}-\delta^{\alpha}=\delta^{\alpha}\left(  \frac{b^{a}}{\delta^{\alpha
}}-1\right)  =\delta^{\alpha}\left(  \left(  1+\frac{b-\delta}{\delta}\right)
^{\alpha}-1\right)  \geq\delta^{\alpha}\left(  \frac{\alpha\left(
b-\delta\right)  }{\delta}\right)  =\left(  b-\delta\right)  \alpha
\delta^{\alpha-1},
\]
completing the proof of the proposition.
\end{proof}

\bigskip

Recall that inequality (\ref{gin1}) is obtained by taking a vector $\left(
x_{1},\ldots,x_{n}\right)  $ with $x_{1}=\cdots=x_{n}=n^{-1/\alpha}.$ Now let
$\mathbf{x}=\left(  x_{1},\ldots,x_{n}\right)  $ be a nonnegative vector such
that $\left\Vert \mathbf{x}\right\Vert _{\alpha}=1$ and $\lambda^{\left(
\alpha\right)  }\left(  G\right)  =P_{G}\left(  \mathbf{x}\right)  .$ It turns
out that if the entries of $\mathbf{x}$ are close to $n^{-1/\alpha},$ then the
bound (\ref{gin1}) can be inverted to some extent, which also implies that the
graph is almost regular. The following technical lemma gives a quantitative
form of this statement.

\begin{lemma}
\label{le2} Let $1<\alpha\leq r,$ let $G$ be an $r$-graph of sufficiently
large order $n,$ with minimum degree $\delta$ and with $\lambda^{\left(
\alpha\right)  }\left(  G\right)  =\lambda.$ Let $\mathbf{x}=\left(
x_{1},\ldots,x_{n}\right)  $ be a nonnegative vector such that $\left\Vert
\mathbf{x}\right\Vert _{\alpha}=1$ and $\lambda=P_{G}\left(  \mathbf{x}%
\right)  .$ If the value $x=\min\left\{  x_{1},\ldots,x_{n}\right\}  $
satisfies%
\begin{equation}
x^{\alpha}\geq\frac{1}{n}\left(  1-\frac{1}{\left(  \alpha-1\right)  \log
n}\right)  ,\label{prem}%
\end{equation}
then
\[
\frac{\lambda n^{r/\alpha-1}}{\left(  r-1\right)  !}\leq\delta+\frac
{2r}{\alpha\left(  \alpha-1\right)  \log n}\binom{n-1}{r-1}.
\]

\end{lemma}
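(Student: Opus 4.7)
The plan is to apply Lemma \ref{le1}, linearise the left–hand side in $\lambda$ using inequality (\ref{in13}), and then show that the near–uniformity assumption on $x$ forces the right–hand side to be small.

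First I would verify that the hypothesis (\ref{prem}) forces $\delta\ge 1$. Indeed, for $n$ large enough the bound $\frac{1}{n}\bigl(1-\frac{1}{(\alpha-1)\log n}\bigr)$ is strictly positive, so every entry of $\mathbf{x}$ is positive; if some vertex $k$ had degree $0$, the eigenvector equation (\ref{eequ}) would give $\lambda x_k^{\alpha-1}=0$, forcing $x_k=0$, contradicting $x>0$ (the case $\lambda=0$ is trivial). So $\delta\ge 1$ and in particular $\delta^{\alpha-1}>0$, and Lemma \ref{le1} can be divided through by $\delta^{\alpha-1}$.

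Next I would combine Lemma \ref{le1} with (\ref{in13}): the left–hand side of the former is bounded below by $\bigl(\tfrac{\lambda n^{r/\alpha-1}}{(r-1)!}-\delta\bigr)\alpha\delta^{\alpha-1}x^{\alpha(r-1)}$, so after dividing by $\alpha\delta^{\alpha-1}x^{\alpha(r-1)}$ one gets the clean inequality
\[
\frac{\lambda n^{r/\alpha-1}}{(r-1)!}-\delta
\;\le\;\frac{1}{\alpha}\binom{n-1}{r-1}\left[\left(\frac{1-x^{\alpha}}{(n-1)x^{\alpha}}\right)^{r-1}-1\right].
\]
This is the key reduction: it remains only to show that the bracketed expression is at most $\frac{2r}{(\alpha-1)\log n}$ when $n$ is large.

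To this end, set $\epsilon=\frac{1}{(\alpha-1)\log n}$ and write $x^{\alpha}=\frac{1-s}{n}$ for some $0\le s\le\epsilon$ (the upper bound $x^{\alpha}\le 1/n$ is automatic from $\|\mathbf x\|_\alpha=1$ and $x=\min_i x_i$). A direct algebraic calculation gives
\[
\frac{1-x^{\alpha}}{(n-1)x^{\alpha}}
\;=\;\frac{n-1+s}{(n-1)(1-s)}
\;=\;1+\frac{sn}{(n-1)(1-s)}
\;\le\;1+\frac{\epsilon n}{(n-1)(1-\epsilon)}.
\]
Since $\epsilon\to 0$ and $\frac{n}{(n-1)(1-\epsilon)}\to 1$ as $n\to\infty$, the last quantity is $1+\epsilon(1+o(1))$. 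Using $(1+y)^{r-1}-1\le (r-1)y\,e^{(r-1)y}$, the bracketed expression is at most $(r-1)\epsilon(1+o(1))$, which for all sufficiently large $n$ is $\le 2r\epsilon=\frac{2r}{(\alpha-1)\log n}$ (here the slack between $r-1$ and $2r$ comfortably absorbs the $(1+o(1))$ factor). Substituting this bound back yields the conclusion.

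The main obstacle is nothing deep — it is purely the bookkeeping in the last paragraph, namely turning the small excess of $x^{\alpha}$ above the ``uniform'' value $1/n$ into a controlled linear bound on $u^{r-1}-1$, with the right constant $2r$ rather than $r-1$. The Bernoulli step (\ref{in13}) is what makes the whole argument linear in $\lambda$, so without it the cancellation in the third paragraph would not be possible.
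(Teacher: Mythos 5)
Your proposal is correct and follows essentially the same route as the paper: apply Lemma \ref{le1}, linearise $b^{\alpha}-\delta^{\alpha}$ via (\ref{in13}), and use the near-uniformity hypothesis (\ref{prem}) to show the bracketed ratio exceeds $1$ by at most $O\bigl(\tfrac{r}{(\alpha-1)\log n}\bigr)$; the only differences are cosmetic (you apply (\ref{in13}) before rather than after the elementary estimates, and use $(1+y)^{r-1}-1\le (r-1)y\,e^{(r-1)y}$ in place of the paper's Bernoulli manipulations). Your explicit check that $\delta\ge 1$ (equivalently that the division by $\delta^{\alpha-1}$ is legitimate) is a small point the paper leaves implicit, and it is handled correctly.
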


\begin{proof}
To begin with, Lemma \ref{le1} gives%
\[
\left(  \left(  \frac{\lambda n^{r/\alpha-1}}{\left(  r-1\right)  !}\right)
^{\alpha}-\delta^{\alpha}\right)  x^{\alpha\left(  r-1\right)  }\leq
\binom{n-1}{r-1}\delta^{\alpha-1}\left(  \frac{\left(  1-x^{\alpha}\right)
^{r-1}}{\left(  n-1\right)  ^{r-1}}-x^{\alpha\left(  r-1\right)  }\right)  .
\]
Now, since the premise (\ref{prem}) implies that $x>0,$ we can rearrange the
above inequality to%
\begin{equation}
\left(  \frac{\lambda n^{r/\alpha-1}}{\left(  r-1\right)  !}\right)  ^{\alpha
}-\delta^{\alpha}\leq\binom{n-1}{r-1}\delta^{\alpha-1}\left(  \frac{1}{\left(
n-1\right)  ^{r-1}}\frac{\left(  1-x^{\alpha}\right)  ^{r-1}}{x^{\alpha\left(
r-1\right)  }}-1\right)  .\label{in8}%
\end{equation}
Obviously, the expression $\left(  1-y\right)  /y$ decreases with $y;$ in
particular, the premise
\[
x^{\alpha}\geq\frac{1}{n}\left(  1-\frac{1}{\left(  \alpha-1\right)  \log
n}\right)
\]
implies that
\begin{align*}
\frac{1-x^{\alpha}}{x^{\alpha}} &  \leq\frac{n-1+\frac{1}{\left(
\alpha-1\right)  \log n}}{1-\frac{1}{\left(  \alpha-1\right)  \log n}}%
=\frac{\left(  n-1\right)  \left(  \alpha-1\right)  \log n+1}{\left(
\alpha-1\right)  \log n-1}\\
&  =\left(  n-1\right)  \frac{\log n+\frac{1}{\left(  n-1\right)  \left(
\alpha-1\right)  }}{\log n-\frac{1}{\left(  \alpha-1\right)  }}\\
&  =\left(  n-1\right)  \left(  1+\frac{n}{\left(  \alpha-1\right)  \left(
n-1\right)  \left(  \log n-\frac{1}{\left(  \alpha-1\right)  }\right)
}\right)  .
\end{align*}
Now, bounding the expression $\left(  1-x^{\alpha}\right)  /x^{\alpha}$ in
(\ref{in8}), we get
\begin{equation}
\left(  \frac{\lambda n^{r/\alpha-1}}{\left(  \left(  r-1\right)  !\right)
}\right)  ^{\alpha}-\delta^{\alpha}<\binom{n-1}{r-1}\delta^{\alpha-1}\left(
\left(  1+\frac{n}{\left(  \alpha-1\right)  \left(  n-1\right)  \left(  \log
n-\frac{1}{\left(  \alpha-1\right)  }\right)  }\right)  ^{r-1}-1\right)
.\label{in9}%
\end{equation}
Next, assuming that $n$ is large enough and using Bernoulli's inequality, we
find that
\begin{align*}
\left(  1+\frac{1}{\left(  \alpha-1\right)  \left(  1-1/n\right)  \left(  \log
n-\frac{1}{\left(  \alpha-1\right)  }\right)  }\right)  ^{r-1} &  \leq
1+\frac{r-1}{\left(  \alpha-1\right)  \left(  1-1/n\right)  \left(  \log
n-\frac{1}{\left(  \alpha-1\right)  }\right)  -r}\\
&  \leq1+\frac{2r}{\left(  \alpha-1\right)  \log n}.
\end{align*}
Replacing this bound in (\ref{in9}), we get
\begin{equation}
\left(  \frac{\lambda n^{r/\alpha-1}}{\left(  \left(  r-1\right)  !\right)
}\right)  ^{\alpha}-\delta^{\alpha}<\binom{n-1}{r-1}\delta^{\alpha-1}\frac
{2r}{\left(  \alpha-1\right)  \log n}.\label{in10}%
\end{equation}
On the other hand, (\ref{in13}) implies
\[
\left(  \frac{\lambda n^{r/\alpha-1}}{\left(  r-1\right)  !}-\delta\right)
\alpha\delta^{\alpha-1}\leq\binom{n-1}{r-1}\delta^{\alpha-1}\frac{2r}{\left(
\alpha-1\right)  \log n},
\]
and so,
\[
\frac{\lambda n^{r/\alpha-1}}{\left(  r-1\right)  !}\leq\delta+\frac
{2r}{\alpha\left(  \alpha-1\right)  \log n}\binom{n-1}{r-1},
\]
completing the proof of Lemma \ref{le2}.
\end{proof}

\medskip

\section{\label{pf}Proofs}

The proofs given below follow the order of appearance of the statements in
Section 1, except for the proof of Theorem \ref{thCon2}, given at the end of
the section.\bigskip

\begin{proof}
[\textbf{Proof of Proposition \ref{pro3}}]Let $\mathbf{x}=\left(  x_{1}%
,\ldots,x_{n}\right)  $ be a nonnegative vector such that $\left\Vert
\mathbf{x}\right\Vert _{\alpha}=1$ and $\lambda^{\left(  \alpha\right)
}\left(  G\right)  =P_{G}\left(  \mathbf{x}\right)  .$ Assume that $\alpha\geq
r,$ let $x=\max\left\{  x_{1},\ldots,x_{n}\right\}  ,$ and let $k\in V\left(
G\right)  $ be a vertex for which $x_{k}=x.$ From equations (\ref{eequ}) we
have%
\[
\frac{\lambda^{\left(  \alpha\right)  }\left(  G\right)  }{\left(  r-1\right)
!}x^{\alpha-1}=\sum_{\left\{  k,i_{1},\ldots,i_{r-1}\right\}  \in E\left(
G\right)  }x_{i_{1}}\cdots x_{i_{r-1}}\leq\Delta x^{r-1}%
\]
Since $x\geq n^{-1/\alpha}$ and $\alpha\geq r,$ we find that%
\[
\frac{\lambda^{\left(  \alpha\right)  }\left(  G\right)  }{\left(  r-1\right)
!}\leq\Delta x^{r-\alpha}\leq\Delta\left(  n^{-1/\alpha}\right)  ^{r-\alpha
}=\frac{\Delta}{n^{r/\alpha-1}},
\]
proving (\ref{inmax}). Now if we have equality in the above, then
$x=n^{-1/\alpha}$ and so $x_{1}=\cdots=x_{n}=n^{-1/\alpha}.$ Thus, equations
(\ref{eequ}) show that all degrees are equal to $\lambda^{\left(
\alpha\right)  }\left(  G\right)  /\left(  r-1\right)  !=\Delta$, and $G$ is
regular. On the other hand,
\[
\frac{re\left(  G\right)  }{n^{r/\alpha}}\leq\frac{\lambda^{\left(
\alpha\right)  }\left(  G\right)  }{\left(  r-1\right)  !}\leq\frac{\Delta
}{n^{r/\alpha-1}},
\]
and so if $G$ is regular, then $\lambda^{\left(  \alpha\right)  }\left(
G\right)  /\left(  r-1\right)  !=\Delta n^{1-r/\alpha},$ completing the proof
of \emph{(i)}.

To prove \emph{(ii)}, assume that $1\leq\alpha<r$ and let $G$ be the union of
$n$ disjoint complete $r$-graph on $k>r$ vertices. It is easy to see that
\[
\lambda^{\left(  \alpha\right)  }\left(  G_{0}\right)  =P_{G_{0}}\left(
\left(  k^{-1/\alpha},\ldots,k^{-1/\alpha}\right)  \right)  =r!\binom{k}%
{r}k^{-r/\alpha}%
\]
and $\Delta=\binom{k-1}{r-1}.$ Hence,%
\[
\frac{\lambda^{\left(  \alpha\right)  }\left(  G_{0}\right)  }{\left(
r-1\right)  !}=\frac{\Delta}{k^{r/\alpha-1}},
\]
and so (\ref{inmax}) fails if $n$ is large, because $r>\alpha.$ This completes
the proof of \emph{(ii)}.

There is nothing to prove in \emph{(iii)}, in view of $\lambda^{\left(
\alpha\right)  }\left(  G\right)  <\lambda^{\left(  r\right)  }\left(
G\right)  \leq\left(  r-1\right)  !\Delta.$
\end{proof}

\bigskip

\begin{proof}
[\textbf{Proof of Proposition \ref{pro4}}]If $\lambda^{\left(  \alpha\right)
}\left(  G\right)  =r!e\left(  G\right)  /n^{r/\alpha},$ then for the
$n$-vector $\mathbf{x}=\left(  n^{-1/\alpha},\ldots,n^{-1/\alpha}\right)  $ we
see that $\lambda^{\left(  \alpha\right)  }\left(  G\right)  =P_{G}\left(
\mathbf{x}\right)  ,$ and so $\mathbf{x}$ satisfies equations (\ref{eequ}),
which implies that all degrees are equal. If $\alpha\geq r$ and $G$ is
regular, then from%
\[
\frac{re\left(  G\right)  }{n^{r/\alpha}}\leq\frac{\lambda^{\left(
\alpha\right)  }\left(  G\right)  }{\left(  r-1\right)  !}\leq\frac{\Delta
}{n^{r/\alpha-1}},\text{ }%
\]
we see that $\lambda^{\left(  \alpha\right)  }\left(  G\right)  =r!e\left(
G\right)  /n^{r/\alpha}.$ Finally, let $1\leq\alpha<r,$ fix an integer $k>r,$
and take the union of $n$ complete $r$-graphs of order $k.$ As in the proof of
Proposition \ref{pro3} we see that
\[
\lambda^{\left(  \alpha\right)  }\left(  G\right)  =r!\binom{k}{r}%
k^{-r/\alpha}>r!n\binom{k}{r}/n^{r/\alpha}=r!e\left(  G\right)  /n^{r/\alpha
},
\]
completing the proof of Proposition \ref{pro4}.
\end{proof}

\bigskip

\begin{proof}
[\textbf{Proof of Proposition \ref{pro5}}]Let $1\leq\alpha<\beta,$ and let
$\mathbf{x}=\left(  x_{1},\ldots,x_{n}\right)  $ be a nonnegative vector
satisfying $\left\Vert \mathbf{x}\right\Vert _{\beta}=1$ and $\lambda^{\left(
\beta\right)  }\left(  G\right)  =P_{G}\left(  \mathbf{x}\right)  $. Using
Jensen's inequality, we see that
\begin{equation}
\left\Vert \mathbf{x}\right\Vert _{\alpha}^{r}=\left(  x_{1}^{\alpha}%
+\cdots+x_{n}^{\alpha}\right)  ^{r/\alpha}\leq\left(  x_{1}^{\beta}%
+\cdots+x_{n}^{\beta}\right)  ^{r/\beta}n^{r/\alpha-r/\beta}=n^{r/\alpha
-r/\beta}. \label{in12}%
\end{equation}
Therefore,%
\[
\lambda^{\left(  \beta\right)  }\left(  G\right)  =P_{G}\left(  \mathbf{x}%
\right)  \leq\lambda^{\left(  \alpha\right)  }\left(  G\right)  \left\Vert
\mathbf{x}\right\Vert _{\alpha}^{r}\leq\lambda^{\left(  \alpha\right)
}\left(  G\right)  n^{r/\alpha-r/\beta},
\]
implying that $h_{G}\left(  \alpha\right)  $ is nonincreasing.

If $G$ is non-regular, then some of the entries $x_{1},\ldots,x_{n}$ are
distinct and so Jensen's inequality implies strict inequality in (\ref{in12}),
implying in turn that
\[
\lambda^{\left(  \beta\right)  }\left(  G\right)  <\lambda^{\left(
\alpha\right)  }\left(  G\right)  n^{r/\alpha-r/\beta},
\]
proving Proposition \ref{pro5}.
\end{proof}

\bigskip

\begin{proof}
[\textbf{Proof of Propostion \ref{pro6}}]Let $1\leq\alpha<\beta.$ Set for
short $m=e\left(  G\right)  $ and let $\mathbf{x}=\left(  x_{1},\ldots
,x_{n}\right)  $ be a nonnegative vector satisfying $\left\Vert \mathbf{x}%
\right\Vert _{\beta}=1$ and $\lambda^{\left(  \beta\right)  }\left(  G\right)
=P_{G}\left(  \mathbf{x}\right)  .$ Using Jensen's inequality, we see that
\[
\frac{\lambda^{\left(  \beta\right)  }\left(  G\right)  }{r!m}=\frac{1}{m}%
\sum_{\left\{  i_{1},\ldots,i_{r}\right\}  \in E\left(  G\right)  }x_{i_{1}%
}\ldots x_{i_{r}}\leq\left(  \frac{1}{m}\sum_{\left\{  i_{1},\ldots
,i_{r}\right\}  \in E\left(  G\right)  }x_{i_{1}}^{\beta/\alpha}\ldots
x_{i_{r}}^{\beta/\alpha}\right)  ^{\alpha/\beta}.
\]
Note that
\[
\left(  x_{1}^{\beta/\alpha}\right)  ^{\alpha}+\cdots+\left(  x_{n}%
^{\beta/\alpha}\right)  ^{\alpha}=x_{1}^{\beta}+\cdots+x_{n}^{\beta}=1,
\]
and so, for the vector $\mathbf{y}=\left(  x_{1}^{\beta/\alpha},\ldots
,x_{n}^{\beta/\alpha}\right)  $ we have $\left\Vert \mathbf{y}\right\Vert
_{\alpha}=1.$ Hence,
\[
\frac{1}{m}\sum_{\left\{  i_{1},\ldots,i_{r}\right\}  \in E\left(  G\right)
}x_{i_{1}}^{\beta/\alpha}\ldots x_{i_{r}}^{\beta/\alpha}=\frac{1}{r!m}%
P_{G}\left(  \mathbf{y}\right)  \leq\frac{1}{r!m}\lambda^{\left(
\alpha\right)  }\left(  G\right)  ,
\]
and so,%
\[
\left(  \frac{\lambda^{\left(  \beta\right)  }\left(  G\right)  }{r!m}\right)
^{\beta}\leq\left(  \frac{\lambda^{\left(  \alpha\right)  }\left(  G\right)
}{r!m}\right)  ^{\alpha},
\]
proving Proposition \ref{pro6}.
\end{proof}

\bigskip

\begin{proof}
[\textbf{Proof of Theorem \ref{th1}}]For every integer $n\geq2,$ set for short
$\lambda_{n}^{\left(  \alpha\right)  }=\lambda^{\left(  \alpha\right)
}\left(  \mathcal{P},n\right)  .$ Let $G\in\mathcal{P}_{n}$ be such that
$\lambda^{\left(  \alpha\right)  }\left(  G\right)  =\lambda_{n}^{\left(
\alpha\right)  }$ and let $\mathbf{x}=\left(  x_{1},\ldots,x_{n}\right)  $ be
a nonnegative vector such that $\left\Vert \mathbf{x}\right\Vert _{\alpha}=1$
and
\[
\lambda_{n}^{\left(  \alpha\right)  }=\lambda^{\left(  \alpha\right)  }\left(
G\right)  =P_{G}\left(  \mathbf{x}\right)  .
\]
If $\alpha=1,$ we obviously have $\lambda_{n}^{\left(  1\right)  }\geq
\lambda_{n-1}^{\left(  1\right)  }.$ and in view of
\[
\lambda_{n}^{\left(  1\right)  }=P_{G}\left(  \mathbf{x}\right)  \leq
r!\sum_{1\leq i_{1}<\cdots<i_{r}\leq n}x_{i_{1}}\ldots x_{i_{r}}\leq\left(
x_{1}+\cdots+x_{n}\right)  ^{r}=1,
\]
we see that the sequence $\left\{  \lambda_{n}^{\left(  1\right)  }\right\}
_{n=1}^{\infty}$ is converging to some $\lambda$. Then,%
\[
\lambda=\lim_{n\rightarrow\infty}\lambda_{n}^{\left(  1\right)  }%
n^{r-r}=\lambda^{\left(  1\right)  }\left(  \mathcal{P}\right)  ,
\]
proving (\ref{bnd1})

Suppose now that $\alpha>1.$ Obviously there exists a vertex $k$ of $G$ such
that $x_{k}^{\alpha}\leq1/n.$ Write $G-k$ for the $r$-graph obtained from
$\mathbf{x}$ by omitting the vertex $k,$ and let $\mathbf{x}^{\prime}$ be the
$\left(  n-1\right)  $-vector obtained from $\mathbf{x}$ by omitting the entry
$x_{k}.$ Then,
\begin{align*}
P_{G-k}\left(  \mathbf{x}^{\prime}\right)   &  =P_{G}\left(  \mathbf{x}%
\right)  -r!x_{k}\sum_{\left\{  k,i_{1},\ldots,i_{r-1}\right\}  \in E\left(
G\right)  }x_{i_{1}}\ldots x_{i_{r-1}}=\\
&  =\lambda^{\left(  \alpha\right)  }\left(  G\right)  -rx_{i}\left(
\lambda^{\left(  \alpha\right)  }\left(  G\right)  x_{i}^{\alpha-1}\right)
=\lambda_{n}^{\left(  \alpha\right)  }\left(  1-rx_{i}^{\alpha}\right)
\end{align*}
On the other hand, $\mathcal{P}$ is a hereditary property, so $G-k\in
\mathcal{P}_{n-1},$ and therefore,
\[
P_{G-k}\left(  \mathbf{x}^{\prime}\right)  \leq\lambda^{\left(  \alpha\right)
}\left(  G-k\right)  \left\Vert \mathbf{x}^{\prime}\right\Vert _{\alpha}%
^{r}=\lambda^{\left(  \alpha\right)  }\left(  G-k\right)  \left(
1-x_{k}^{\alpha}\right)  ^{r/\alpha}\leq\lambda_{n-1}^{\left(  \alpha\right)
}\left(  1-x_{k}^{\alpha}\right)  ^{r/\alpha}.
\]
Thus, we obtain%
\begin{equation}
\lambda_{n}^{\left(  \alpha\right)  }\leq\lambda_{n-1}^{\left(  \alpha\right)
}\frac{\left(  1-x_{k}^{\alpha}\right)  ^{r/\alpha}}{\left(  1-rx_{k}^{\alpha
}\right)  }.\label{in3}%
\end{equation}
Note that the function
\[
f\left(  x\right)  =\frac{\left(  1-x\right)  ^{r/\alpha}}{1-rx}%
\]
is nondecreasing in $x$ for $0\leq x\leq1/n$ and $n$ sufficiently large.
Indeed,%
\begin{align*}
\frac{df\left(  x\right)  }{dx} &  =\frac{-\frac{r}{\alpha}\left(  1-x\right)
^{r/\alpha-1}\left(  1-rx\right)  +rx\left(  1-x\right)  ^{r/\alpha}}{\left(
1-rx\right)  ^{2}}\\
&  =\left(  -\frac{1}{\alpha}\left(  1-rx\right)  +\left(  1-x\right)
\right)  \frac{r\left(  1-x\right)  ^{r/\alpha-1}}{\left(  1-rx\right)  ^{2}%
}\\
&  =\left(  -\left(  \frac{1}{\alpha}-1\right)  +\left(  \frac{r}{\alpha
}-1\right)  x\right)  \frac{r\left(  1-x\right)  ^{r/\alpha-1}}{\left(
1-rx\right)  ^{2}}\geq0
\end{align*}
Here we use the fact that $1/\alpha-1>0$ and that $\left(  r/\alpha-1\right)
x$ tends to $0$ when $n$ $\rightarrow\infty.$

Hence, in view of (\ref{in3}), we find that for $n$ large enough,
\[
\lambda_{n}^{\left(  \alpha\right)  }\leq\lambda_{n-1}^{\left(  \alpha\right)
}f\left(  x_{k}^{\alpha}\right)  \leq\lambda_{n-1}^{\left(  \alpha\right)
}f\left(  \frac{1}{n}\right)  =\lambda_{n-1}^{\left(  \alpha\right)  }%
\frac{n\left(  1-1/n\right)  ^{r/\alpha}}{\left(  n-r\right)  },
\]
and so, (\ref{in3}) implies that
\[
\frac{\lambda_{n}^{\left(  \alpha\right)  }n^{r/\alpha-1}}{\left(  n-1\right)
\left(  n-2\right)  \cdots\left(  n-r+1\right)  }\leq\frac{\lambda
_{n}^{\left(  \alpha\right)  }\left(  n-1\right)  ^{r/\alpha-1}}{\left(
n-2\right)  \left(  n-2\right)  \cdots\left(  n-r\right)  }.
\]
Therefore, the sequence%
\[
\left\{  \frac{\lambda_{n}^{\left(  \alpha\right)  }n^{r/\alpha-1}}{\left(
n-1\right)  \left(  n-2\right)  \cdots\left(  n-r+1\right)  }\right\}
_{n=1}^{\infty}%
\]
is nonincreasing, and so it is converging, completing the proof of
(\ref{exlima}) and (\ref{bndsa}) for $\alpha>1$.
\end{proof}

\bigskip

\begin{proof}
[\textbf{Proof of Proposition \ref{pro7}}]For every $G\in\mathcal{P}$,
Proposition \ref{th2} gives
\[
\left(  \lambda^{\left(  \beta\right)  }\left(  G\right)  \right)  ^{\beta
}\leq\left(  \lambda^{\left(  \alpha\right)  }\left(  G\right)  \right)
^{\alpha}\left(  r!e\left(  G\right)  \right)  ^{\beta-\alpha}.
\]
Hence, choosing $G\in\mathcal{P}_{n}$ such that $\lambda^{\left(
\beta\right)  }\left(  G\right)  =\lambda^{\left(  \beta\right)  }\left(
\mathcal{P},n\right)  ,$ we find that
\begin{align*}
\left(  \frac{\lambda^{\left(  \beta\right)  }\left(  \mathcal{P},n\right)
n^{r/\beta-1}}{\left(  n-1\right)  _{r-1}}\right)  ^{\beta} &  =\left(
\frac{\lambda^{\left(  \beta\right)  }\left(  G\right)  n^{r/\beta-1}}{\left(
n-1\right)  _{r-1}}\right)  ^{\beta}\\
&  \leq\left(  \frac{\lambda^{\left(  \beta\right)  }\left(  G\right)
n^{r/\beta-1}}{\left(  n-1\right)  _{r-1}}\right)  ^{\alpha}\left(
\frac{r!e\left(  G\right)  }{\left(  n\right)  _{r}}\right)  ^{\beta-\alpha}\\
&  \leq\left(  \frac{\lambda^{\left(  \beta\right)  }\left(  G\right)
n^{r/\beta-1}}{\left(  n-1\right)  _{r-1}}\right)  ^{\alpha}\left(  ex\left(
\mathcal{P},n\right)  /\binom{n}{r}\right)  ^{\beta-\alpha}.
\end{align*}
Letting now $n\rightarrow\infty,$ we see that%
\[
\left(  \lambda^{\left(  \beta\right)  }\left(  \mathcal{P}\right)  \right)
^{\beta}\leq\left(  \lambda^{\left(  \alpha\right)  }\left(  \mathcal{P}%
\right)  \right)  ^{\alpha}\left(  \pi\left(  \mathcal{P}\right)  \right)
^{\beta-\alpha}%
\]
completing the proof of (\ref{inabp}).
\end{proof}

\bigskip

\begin{proof}
[\textbf{Proof of Theorem \ref{thblo}}]For the purposes of this proof let us
write $k_{H}\left(  G\right)  $ for the number of subgraphs of $G$ which are
isomorphic to $H$.

We start by recalling the Hypergraph Removal Lemma, one of the most important
consequences of the Hypergraph Regularity Lemma, proved independently by
Gowers \cite{Gow07} and by Nagle, R\"{o}dl, Schacht and Skokan \cite{NRS06},
\cite{RoSk04}.\medskip

\textbf{Removal Lemma} \emph{Let }$H$\emph{ be an }$r$\emph{-graph of order
}$h$\emph{ and let }$\varepsilon>0.$\emph{ There exists }$\delta=\delta
_{H}\left(  \varepsilon\right)  >0$\emph{ such that if }$G$\emph{ is an }%
$r$\emph{-graph of order }$n,$\emph{ with }$k_{H}\left(  G\right)  <\delta
n^{h},$\emph{ then there is an }$r$\emph{-graph }$G_{0}\subset G$\emph{ such
that }$e\left(  G_{0}\right)  \geq e\left(  G\right)  -\varepsilon n^{r}%
$\emph{ and }$k_{H}\left(  G_{0}\right)  =0.\medskip$

In \cite{Erd64} Erd\H{o}s showed that for every $\varepsilon>0$ there exists
$\delta>0$ such that if $G$ is an $r$-graph with $e\left(  G\right)
\geq\varepsilon n^{r},$ then $K_{r}\left(  k,\ldots,k\right)  \subset G$ for
some $k\geq\delta\left(  \log n\right)  ^{1/\left(  r-1\right)  }.$ As noted
by R\"{o}dl and Schacht \cite{RoSc12} (also by Bollob\'{a}s, unpublished) this
result of Erd\H{o}s implies the following general assertion.\medskip

\textbf{Theorem A }\emph{Let }$H$\emph{ be an }$r$\emph{-graph of order }%
$h$\emph{ and let }$\varepsilon>0.$\emph{ There exists }$\delta=\delta
_{H}\left(  \varepsilon\right)  >0$\emph{ such that if }$G$\emph{ is an }%
$r$\emph{-graph of order }$n,$\emph{ with }$k_{H}\left(  G\right)
\geq\varepsilon n^{h},$\emph{ then }$H\left(  k,\ldots,k\right)  \subset
G$\emph{ for some }$k=\left\lceil \delta\left(  \log n\right)  ^{1/\left(
h-1\right)  }\right\rceil .\medskip$

Suppose now that $H$ is an $r$-graph of order $h,$ let $H\left(  k_{1}%
,\ldots,k_{h}\right)  $ be a fixed blow-up of $H,$ and set $k=\max\left\{
k_{1},\ldots,k_{h}\right\}  .$ Take $G\in Mon\left(  H\left(  k_{1}%
,\ldots,k_{h}\right)  \right)  _{n}$ such that
\[
\lambda^{\left(  \alpha\right)  }\left(  G\right)  =\lambda^{\left(
\alpha\right)  }\left(  Mon\left(  H\left(  k_{1},\ldots,k_{h}\right)
\right)  ,n\right)
\]
For every $\varepsilon>0,$ choose $\delta=\delta_{H}\left(  \varepsilon
\right)  ,$ as in the Removal Lemma. Since $H\left(  k,\ldots,k\right)
\nsubseteq G,$ Theorem A implies that if $n$ is sufficiently large, then
$k_{H}\left(  G\right)  <\delta n^{h}.$ Now the Removal Lemma implies that
there is an $r$-graph $G_{0}\subset G$ such that $e\left(  G_{0}\right)  \geq
e\left(  G\right)  -\varepsilon n^{r}$ and $k_{H}\left(  G_{0}\right)  =0.$
Clearly, we can assume that $V\left(  G_{0}\right)  =V\left(  G\right)  .$ By
Proposition \ref{pro10}, we see that
\[
\lambda^{\left(  \alpha\right)  }\left(  G\right)  \leq\lambda^{\left(
\alpha\right)  }\left(  G_{0}\right)  +\left(  \varepsilon r!n^{r}\right)
^{1-1/\alpha},
\]
and hence,
\begin{align*}
\frac{\lambda^{\left(  \alpha\right)  }\left(  Mon\left(  H\left(
k,\ldots,k\right)  \right)  ,n\right)  n^{r/\alpha-1}}{\left(  n-1\right)
_{r-1}} &  \leq\frac{\left(  \lambda^{\left(  \alpha\right)  }\left(
G_{0}\right)  +\left(  \varepsilon r!n^{r}\right)  ^{1-1/\alpha}\right)
n^{r/\alpha-1}}{\left(  n-1\right)  _{r-1}}\\
&  \leq\lambda^{\left(  \alpha\right)  }\left(  Mon\left(  H\right)  \right)
+o\left(  1\right)  +\frac{\left(  \varepsilon r!\right)  ^{1-1/\alpha
}n^{r-r/\alpha}n^{1-r/\alpha}}{n^{r-1}}\\
&  =\lambda^{\left(  \alpha\right)  }\left(  Mon\left(  H\right)  \right)
+o\left(  1\right)  +\left(  \varepsilon r!\right)  ^{1-1/\alpha}.
\end{align*}
Since $\varepsilon$ can be made arbitrarily small, we see that
\[
\lambda^{\left(  \alpha\right)  }\left(  MonH\left(  k_{1},\ldots
,k_{h}\right)  \right)  \leq\lambda^{\left(  \alpha\right)  }\left(
Mon\left(  H\right)  \right)  ,
\]
completing the proof of Theorem \ref{thblo}.
\end{proof}

\bigskip

\begin{proof}
[\textbf{Proof Theorem \ref{th2}}]Let $1\leq\alpha<\beta.$ Using
(\ref{inabp}), after some cancellations, we find that
\[
\lambda^{\left(  \beta\right)  }\left(  \mathcal{P}\right)  \leq\left(
\lambda^{\left(  \alpha\right)  }\left(  \mathcal{P}\right)  \right)
^{\alpha/\beta}\left(  \pi\left(  \mathcal{P}\right)  \right)  ^{1-\alpha
/\beta}.
\]
From (\ref{limgin}) we have $\pi\left(  \mathcal{P}\right)  \leq
\lambda^{\left(  \alpha\right)  }\left(  \mathcal{P}\right)  $ and so
\[
\left(  \pi\left(  \mathcal{P}\right)  \right)  ^{1-\alpha/\beta}\leq\left(
\lambda^{\left(  \alpha\right)  }\left(  \mathcal{P}\right)  \right)
^{1-\alpha/\beta}.
\]
Substituting in the above, we see that
\[
\lambda^{\left(  \beta\right)  }\left(  \mathcal{P}\right)  \leq\left(
\lambda^{\left(  \alpha\right)  }\left(  \mathcal{P}\right)  \right)
^{\alpha/\beta}\left(  \lambda^{\left(  \alpha\right)  }\left(  \mathcal{P}%
\right)  \right)  ^{1-\alpha/\beta}=\lambda^{\left(  \alpha\right)  }\left(
\mathcal{P}\right)  ,
\]
proving Theorem \ref{th2}.
\end{proof}

\bigskip

\begin{proof}
[\textbf{Proof of Theorem \ref{th3}}]As mentioned above, if $\lambda\left(
\mathcal{P}\right)  =\pi\left(  \mathcal{P}\right)  ,$ then
\[
\lambda^{\left(  \alpha\right)  }\left(  \mathcal{P}\right)  =\pi\left(
\mathcal{P}\right)
\]
for every $\alpha>r.$ Thus, all we we need to prove is the case $1<\alpha\leq
r$. Fix $\alpha$ in the indicated range, and for every natural $n,$ set for
short $\lambda_{n}=\lambda^{\left(  \alpha\right)  }\left(  \mathcal{P}%
,n\right)  .$ Note that if $\lambda^{\left(  \alpha\right)  }\left(
\mathcal{P}\right)  =0,$ then inequality (\ref{limgin}) implies that
$\pi\left(  \mathcal{P}\right)  =0;$ therefore Theorem \ref{th3} holds for
$\lambda^{\left(  \alpha\right)  }\left(  \mathcal{P}\right)  =0.$ We shall
assume hereafter that $\lambda^{\left(  \alpha\right)  }\left(  \mathcal{P}%
\right)  >0.$ Recall also that $\left(  n\right)  _{r}$ stands for $n!/\left(
n-k\right)  !$.\medskip

\textbf{Claim A} \emph{There are infinitely many }$n$\emph{ for which}
\[
\frac{\lambda_{n-1}\left(  n-1\right)  ^{r/\alpha-1}}{\left(  n-2\right)
_{r-1}}-\frac{\lambda_{n}n^{r/\alpha-1}}{\left(  n-1\right)  _{r-1}}<\frac
{1}{n\log n}\cdot\frac{\lambda_{n}n^{r/\alpha-1}}{\left(  n-1\right)  _{r-1}}%
\]
\medskip

\textbf{Proof }Indeed, assume for a contradiction that there is $n_{0}$ such
that
\[
\frac{\lambda_{n-1}\left(  n-1\right)  ^{r/\alpha-1}}{\left(  n-2\right)
_{r-1}}-\frac{\lambda_{n}n^{r/\alpha-1}}{\left(  n-1\right)  _{r-1}}\geq
\frac{1}{n\log n}\cdot\frac{\lambda_{n}n^{r/\alpha-1}}{\left(  n-1\right)
_{r-1}}%
\]
for every $n>n_{0}.$ Then for every $k>n_{0}$ we see that
\begin{align*}
\frac{\lambda_{n_{0}-1}\left(  n_{0}-1\right)  ^{r/\alpha-1}}{\left(
n_{0}-1\right)  _{r-1}}-\frac{\lambda_{k}k^{r/\alpha-1}}{\left(  k-1\right)
_{r-1}} &  =\sum_{n=n_{0}}^{k}\frac{\lambda_{n-1}\left(  n-1\right)
^{r/\alpha-1}}{\left(  n-2\right)  _{r-1}}-\frac{\lambda_{n}n^{r/\alpha-1}%
}{\left(  n-1\right)  _{r-1}}\\
&  \geq\sum_{n=n_{0}}^{k}\frac{1}{n\log n}\frac{\lambda_{n}n^{r/\alpha-1}%
}{\left(  n-1\right)  _{r-1}}\\
&  \geq\lambda^{\left(  \alpha\right)  }\left(  \mathcal{P}\right)
\sum_{n=n_{0}}^{k}\frac{1}{n\log n}.
\end{align*}
This is a contradiction, since the left-hand side is bounded and the
right-hand side diverges, proving Claim A.

Using Claim A, choose sufficiently large $n$ so that
\[
\frac{\lambda_{n-1}\left(  n-1\right)  ^{r/\alpha-1}}{\left(  n-2\right)
_{r-1}}-\frac{\lambda_{n}n^{r/\alpha-1}}{\left(  n-1\right)  _{r-1}}<\frac
{1}{n\log n}\cdot\frac{\lambda_{n}n^{r/\alpha-1}}{\left(  n-1\right)  _{r-1}}.
\]
After some rearrangement we obtain
\begin{equation}
\frac{\lambda_{n-1}}{\lambda_{n}}<\frac{n^{r/\alpha-1}\left(  n-r\right)
}{\left(  n-1\right)  ^{r/\alpha}}\left(  1+\frac{1}{n\log n}\right)
.\label{in11}%
\end{equation}
Let $G\in\mathcal{P}_{n}$ be such that $\lambda^{\left(  \alpha\right)
}\left(  G\right)  =\lambda_{n}$ and let $\mathbf{x}=\left(  x_{1}%
,\ldots,x_{n}\right)  $ be a nonnegative vector satisfying $\left\Vert
\mathbf{x}\right\Vert _{\alpha}=1$ and%
\[
\lambda_{n}=\lambda^{\left(  \alpha\right)  }\left(  G\right)  =P_{G}\left(
\mathbf{x}\right)  .
\]
Let $k\in V\left(  G\right)  $ be a vertex and let $\mathbf{x}^{\prime}$ be
the $\left(  n-1\right)  $-vector obtained from $\mathbf{x}$ by omitting
$x_{k}.$ For the graph $G-k$ we have
\begin{align}
P_{G-k}\left(  \mathbf{x}^{\prime}\right)   &  =P_{G}\left(  \mathbf{x}%
\right)  -r!x_{k}\sum_{\left\{  k,i_{1},\ldots,i_{r-1}\right\}  \in E\left(
G\right)  }x_{i_{1}}\ldots x_{i_{r-1}}\nonumber\\
&  =\lambda^{\left(  \alpha\right)  }\left(  G\right)  -rx_{i}\left(
\lambda^{\left(  \alpha\right)  }\left(  G\right)  x_{k}^{\alpha-1}\right)
=\lambda_{n}\left(  1-rx_{k}^{\alpha}\right)  \label{eq2}%
\end{align}
On the other hand, $\mathcal{P}$ is a hereditary property and so
$G-k\in\mathcal{P}_{n-1}.$ Therefore,
\[
P_{G-i}\left(  \mathbf{x}^{\prime}\right)  \leq\lambda\left(  G-k\right)
\left\Vert \mathbf{x}^{\prime}\right\Vert _{\alpha}^{r}=\lambda\left(
G-k\right)  \left(  1-x_{k}^{\alpha}\right)  ^{r/\alpha}\leq\lambda
_{n-1}\left(  1-x_{k}^{\alpha}\right)  ^{r/\alpha}.
\]
This inequality, together with (\ref{eq2}), implies that%
\[
\lambda_{n}\left(  1-rx_{k}^{\alpha}\right)  \leq\lambda_{n-1}\left(
1-x_{k}^{\alpha}\right)  ^{r/\alpha}.
\]
Hence, in view of (\ref{in11}),%
\begin{equation}
\frac{1-rx_{k}^{\alpha}}{\left(  1-x_{k}^{\alpha}\right)  ^{r/\alpha}}%
\leq\frac{\lambda_{n-1}}{\lambda_{n}}\leq\frac{n^{r/\alpha-1}\left(
n-r\right)  }{\left(  n-1\right)  ^{r/\alpha}}\left(  1+\frac{1}{n\log
n}\right)  .\label{in14}%
\end{equation}
We shall prove that $x_{k}^{\alpha}$ is sufficiently large to apply Lemma
\ref{le2}.\medskip

\textbf{Claim B }\emph{For }$n$\emph{ sufficiently large,}
\[
x_{k}^{\alpha}>\frac{1}{n}\left(  1-\frac{1}{\left(  \alpha-1\right)  \log
n}\right)  .
\]
\medskip

\textbf{Proof }Indeed, assume for a contradiction that
\begin{equation}
x_{k}^{\alpha}\leq\frac{1}{n}\left(  1-\frac{1}{\left(  \alpha-1\right)  \log
n}\right)  .\label{asin}%
\end{equation}
Note that the function
\[
f\left(  y\right)  =\frac{1-ry}{\left(  1-y\right)  ^{r/\alpha}}%
\]
is decreasing in $y$ for $0\leq y<1,$ because
\begin{align*}
\frac{df\left(  y\right)  }{dy} &  =\frac{-r\left(  1-y\right)  ^{r/\alpha
}+\frac{r}{\alpha}\left(  1-ry\right)  \left(  1-y\right)  ^{r/\alpha-1}%
}{\left(  1-y\right)  ^{2r/\alpha}}\\
&  =\frac{r\left(  1-y\right)  ^{r/\alpha-1}}{\alpha\left(  1-y\right)
^{2r/\alpha}}\left(  -\alpha\left(  1-y\right)  +1-ry\right)  \\
&  =\frac{r\left(  1-y\right)  ^{r/\alpha-1}}{\alpha\left(  1-y\right)
^{2r/\alpha}}\left(  -\left(  \alpha-1\right)  -\left(  r-1\right)  y\right)
\leq0.
\end{align*}
Hence, (\ref{asin}) implies that
\[
\frac{1-\frac{r}{n}\left(  1-\frac{1}{\left(  \alpha-1\right)  \log n}\right)
}{1-\frac{1}{n}\left(  1-\frac{1}{\left(  \alpha-1\right)  \log n}\right)
^{r/\alpha}}=f\left(  \frac{1}{n}\left(  1-\frac{1}{\left(  \alpha-1\right)
\log n}\right)  \right)  \leq f\left(  x_{k}^{\alpha}\right)  =\frac
{1-rx_{k}^{\alpha}}{\left(  1-x_{k}^{\alpha}\right)  ^{r/\alpha}}.
\]
Combining this inequality with (\ref{in14}), we see that
\[
\frac{1-\frac{r}{n}\left(  1-\frac{1}{\left(  \alpha-1\right)  \log n}\right)
}{1-\frac{1}{n}\left(  1-\frac{1}{\left(  \alpha-1\right)  \log n}\right)
^{r/\alpha}}\leq\frac{n^{r/\alpha-1}\left(  n-r\right)  }{\left(  n-1\right)
^{r/\alpha}}\left(  1+\frac{1}{n\log n}\right)  ,
\]
and so,%
\[
\frac{\left(  n-r+\frac{r}{\left(  \alpha-1\right)  \log n}\right)
n^{r/\alpha-1}}{\left(  n-1+\frac{1}{\left(  \alpha-1\right)  \log n}\right)
^{r/\alpha}}=\frac{1-\frac{r}{n}\left(  1-\frac{1}{\left(  \alpha-1\right)
\log n}\right)  }{\left(  1-\frac{1}{n}\left(  1-\frac{1}{\left(
\alpha-1\right)  \log n}\right)  \right)  ^{r/\alpha}}\leq\frac{n^{r/\alpha
-1}\left(  n-r\right)  }{\left(  n-1\right)  ^{r/\alpha}}\left(  1+\frac
{1}{n\log n}\right)  .
\]
Rearranging this inequality, we obtain%
\begin{equation}
1+\frac{r}{\left(  \alpha-1\right)  \left(  n-r\right)  \log n}\leq\left(
1+\frac{1}{\left(  \alpha-1\right)  \left(  n-1\right)  \log n}\right)
^{r/\alpha}\left(  1+\frac{1}{n\log n}\right)  \label{in15}%
\end{equation}
To simplify the right-hand side of (\ref{in15}), using Bernoulli's inequality,
we obtain for $n$ sufficiently large,%
\begin{align*}
\left(  1+\frac{1}{\left(  \alpha-1\right)  \left(  n-1\right)  \log
n}\right)  ^{r/\alpha} &  =\frac{1}{\left(  1-\frac{1}{\left(  \alpha
-1\right)  \left(  n-1\right)  \log n+1}\right)  ^{r/\alpha}}\leq\frac
{1}{1-\frac{r/\alpha}{\left(  \alpha-1\right)  \left(  n-1\right)  \log n+1}%
}\\
&  =1+\frac{r/\alpha}{\left(  \alpha-1\right)  \left(  n-1\right)  \log
n-r/\alpha+1}\\
&  \leq1+\frac{r/\alpha}{\left(  \alpha-1\right)  \left(  n-2\right)  \log n}.
\end{align*}
Hence,%
\begin{align*}
1+\frac{r}{\left(  \alpha-1\right)  \left(  n-r\right)  \log n} &  \leq\left(
1+\frac{r/\alpha}{\left(  n-2\right)  \left(  \alpha-1\right)  \log n}\right)
\left(  1+\frac{1}{n\log n}\right)  \\
&  \leq1+\frac{r/\alpha}{\left(  n-2\right)  \left(  \alpha-1\right)  \log
}+\frac{1}{n\log n}+\frac{r/\alpha}{\left(  \alpha-1\right)  n\left(
n-2\right)  \log^{2}n}.
\end{align*}
After some cancellations and rearranging, we obtain%
\[
\frac{r\alpha}{n-r}\leq\frac{r}{n-2}+\frac{\left(  \alpha-1\right)  \alpha}%
{n}+\frac{r}{n\left(  n-2\right)  \log n}.
\]
Now in view of $r\geq2$ and $\alpha\leq r,$ we see that
\begin{align*}
\frac{r\alpha}{n-2} &  \leq\frac{r\alpha}{n-r}\leq\frac{r}{n-2}+\frac{\left(
\alpha-1\right)  \alpha}{n}+\frac{r}{n\left(  n-2\right)  \log n}\\
&  \leq\frac{r}{n-2}+\frac{\left(  \alpha-1\right)  r}{n}+\frac{r}{n\left(
n-2\right)  \log n},
\end{align*}
finally reducing to
\[
2\left(  \alpha-1\right)  \leq\frac{1}{\log n},
\]
which is a contradiction for $n$ large. This completes the proof of Claim B.

Hence, if $n$ is sufficiently large, for every $k\in V\left(  G\right)  ,$ we
have
\[
x_{k}^{r}>\frac{1}{n}\left(  1-\frac{1}{\left(  \alpha-1\right)  \log
n}\right)  ,
\]
and so, Lemma \ref{le2} implies that
\[
\frac{\lambda_{n}n^{r/\alpha-1}}{\left(  r-1\right)  !}\leq\delta+\frac
{2r}{\alpha\left(  \alpha-1\right)  \log n}\binom{n-1}{r-1}\leq\frac{re\left(
G\right)  }{n}+\frac{2r}{\alpha\left(  \alpha-1\right)  \log n}\binom
{n-1}{r-1}.
\]
Therefore,%
\[
\frac{\lambda_{n}n^{r/\alpha-1}}{\left(  n-1\right)  _{r-1}}\leq\frac{e\left(
G\right)  }{\frac{n}{r}\binom{n-1}{r-1}}+\frac{2r}{\alpha\left(
\alpha-1\right)  \log n}\leq\frac{ex\left(  \mathcal{P},n\right)  }{\binom
{n}{r}}+\frac{2r}{\alpha\left(  \alpha-1\right)  \log n}.
\]
Since $n$ can be arbitrary large, we can pass to limits obtaining
\[
\lambda^{\left(  \alpha\right)  }\left(  \mathcal{P}\right)  \leq\pi\left(
\mathcal{P}\right)  .
\]
This inequality together with (\ref{in1.1}) completes the proof of Theorem
\ref{th3}.
\end{proof}

\bigskip

\begin{proof}
[\textbf{Proof of Theorem \ref{th4}}]Since Proposition \ref{pro1} implies that
$\lambda^{\left(  1\right)  }\left(  \mathcal{P}\right)  \geq\pi\left(
\mathcal{P}\right)  ,$ to finish the proof we shall show that $\lambda
^{\left(  1\right)  }\left(  \mathcal{P}\right)  \leq\pi\left(  \mathcal{P}%
\right)  .$ We claim that if $\mathbf{x}=\left(  x_{1},\ldots,x_{n}\right)  $
is a nonnegative vector with $\left\Vert \mathbf{x}\right\Vert _{1}=1,$ then
\begin{equation}
P_{G}\left(  \mathbf{x}\right)  \leq\pi\left(  \mathcal{P}\right)
.\label{in16}%
\end{equation}
Because $P_{G}\left(  \mathbf{x}\right)  $ is continuous in each variable, it
suffices to prove the inequality for positive rational $x_{1},\ldots,x_{n}.$
Let thus
\[
x_{1}=k_{1}/p,\ldots,x_{n}=k_{n}/p,
\]
where $k_{1},\ldots,k_{n}$ are positive integers and $p$ is a common
denominator of $x_{1},\ldots,x_{n}$. Obviously the condition $x_{1}%
+\cdots+x_{n}=1$ implies that $p=k_{1}+\cdots+k_{n}.$ Therefore, (\ref{in16})
it is equivalent to
\begin{equation}
\frac{1}{\left(  k_{1}+\cdots+k_{n}\right)  ^{r}}P_{G}\left(  \left(
k_{1},\ldots,k_{n}\right)  \right)  \leq\pi\left(  \mathcal{P}\right)
.\label{in17}%
\end{equation}
Fix thus positive integers $k_{1},\ldots,k_{n}$ and note that%
\[
P_{G}\left(  \left(  k_{1},\ldots,k_{n}\right)  \right)  =P_{G\left(
k_{1},\ldots,k_{n}\right)  }\left(  \left(  1,\ldots,1\right)  \right)
=r!e\left(  G\left(  k_{1},\ldots,k_{n}\right)  \right)  .
\]
On the other hand, $v\left(  G\left(  k_{1},\ldots,k_{n}\right)  \right)
=k_{1}+\cdots+k_{n}$ and so
\[
e\left(  G\left(  k_{1},\ldots,k_{n}\right)  \right)  \leq\left(  \pi\left(
\mathcal{P}\right)  +o\left(  1\right)  \right)  \frac{\left(  k_{1}%
+\cdots+k_{n}\right)  ^{r}}{r!}.
\]
Here the term $o\left(  1\right)  $ tends to $0$ when $k_{1}+\cdots
+k_{n}\rightarrow\infty.$ Likewise, for every positive integer $L,$ we see
that
\begin{align*}
\frac{1}{\left(  k_{1}+\cdots+k_{n}\right)  ^{r}}P_{G}\left(  \left(
k_{1},\ldots,k_{n}\right)  \right)    & =\frac{1}{\left(  Lk_{1}+\cdots
+Lk_{n}\right)  ^{r}}P_{G}\left(  \left(  Lk_{1},\ldots,Lk_{n}\right)
\right)  \\
& =\frac{r!e\left(  G\left(  Lk_{1},\ldots,Lk_{n}\right)  \right)  }{\left(
Lk_{1}+\cdots+Lk_{n}\right)  ^{r}}\leq\pi\left(  \mathcal{P}\right)  +o\left(
1\right)  .
\end{align*}
Now, letting $L\rightarrow\infty,$ we obtain ((\ref{in17}), and so
$\lambda^{\left(  1\right)  }\left(  \mathcal{P}\right)  \leq\pi\left(
\mathcal{P}\right)  ,$ completing the proof of Theorem \ref{th4}.
\end{proof}

\bigskip

\begin{proof}
[\textbf{Proof of Theorem \ref{th5} }]Our proof follows an idea of Sidorenko
\cite{Sid87}, which he used in a similar setting. Let $\mathcal{P}$ be a
hereditary and multiplicative family of $r$-graphs. If $G\in\mathcal{P}_{n}$,
then for every integer $k\geq1,$ we have
\[
v\left(  G\left(  k,\ldots,k\right)  \right)  =kv\left(  G\right)  \text{
\ \ \ and \ \ }e\left(  G\left(  k,\ldots,k\right)  \right)  =k^{r}e\left(
G\right)  .
\]
Therefore,%
\[
e\left(  G\right)  =\frac{e\left(  G\left(  k,\ldots,k\right)  \right)
}{k^{r}}\leq\frac{ex\left(  \mathcal{P},nk\right)  }{k^{r}}.
\]
Since
\[
\lim_{k\rightarrow\infty}\frac{ex\left(  \mathcal{P},nk\right)  }{\binom
{nk}{r}}=\pi\left(  \mathcal{P}\right)  ,
\]
we see that%
\[
e\left(  G\right)  \leq\pi\left(  \mathcal{P}\right)  \binom{nk}{r}\frac
{1}{k^{r}}+o\left(  k\right)  \binom{nk}{r}\frac{1}{k^{r}}.
\]
Letting $k\rightarrow\infty,$ we obtain
\[
e\left(  G\right)  \leq\pi\left(  \mathcal{P}\right)  \frac{n^{r}}{r!},
\]
which proves (\ref{Seq1}).

To prove (\ref{upb}) note that by Proposition \ref{pro8},
\begin{align*}
\lambda^{\left(  \alpha\right)  }\left(  G\right)   &  =\lambda^{\left(
\alpha\right)  }\left(  G\left(  k,\ldots,k\right)  \right)  k^{r/\alpha
-r}\leq\lambda_{nk}^{\left(  \alpha\right)  }k^{r/\alpha-r}\\
&  =\left(  \lambda^{\left(  \alpha\right)  }\left(  \mathcal{P}\right)
+o\left(  1\right)  \right)  \left(  kn\right)  ^{r/\alpha-r}\frac
{k^{r/\alpha-r}}{\left(  kn\right)  ^{r/\alpha-1}}\\
&  =\left(  \lambda^{\left(  \alpha\right)  }\left(  \mathcal{P}\right)
+o\left(  1\right)  \right)  n^{r-r/\alpha}.
\end{align*}
Letting $k\rightarrow\infty,$ we see that
\[
\lambda^{\left(  \alpha\right)  }\left(  G\right)  \leq\lambda^{\left(
\alpha\right)  }\left(  \mathcal{P}\right)  n^{r-r/\alpha}=\pi\left(
\mathcal{P}\right)  n^{r-r/\alpha},
\]
completing the proof of Theorem \ref{th5}.
\end{proof}

\bigskip

\begin{proof}
[\textbf{Proof of Theorem \ref{th6}}]If $G\in\mathcal{P}_{n},$ Proposition
\ref{pro6} implies that
\[
\left(  \lambda^{\left(  \alpha\right)  }\left(  G\right)  \right)  ^{\alpha
}\leq\lambda^{\left(  1\right)  }\left(  G\right)  \left(  r!e\left(
G\right)  \right)  ^{\alpha-1}.
\]
Since $\mathcal{P}$ is hereditary and multiplicative, by Theorem \ref{th1} we
also have%
\[
\lambda^{\left(  1\right)  }\left(  G\right)  \leq\lambda^{\left(  1\right)
}\left(  \mathcal{P}\right)  =\pi\left(  \mathcal{P}\right)  ,
\]
and so,
\[
\lambda^{\left(  1\right)  }\left(  G\right)  \leq\pi\left(  \mathcal{P}%
\right)  ^{1/\alpha}\left(  r!e\left(  G\right)  \right)  ^{1-1/\alpha},
\]
completing the proof of Theorem \ref{th6}.
\end{proof}

\bigskip

\begin{proof}
[\textbf{Proof of Theorem \ref{thCon2}}]Let $\mathcal{P}$ be a monotone
property of $2$-graphs, let $H\notin\mathcal{P}$ be a graph with $\chi\left(
H\right)  =r=\min\left\{  \chi\left(  G\right)  :G\notin\mathcal{P}\right\}
,$ and let $K_{r}\left(  k,\ldots,k\right)  $ be the smallest regular
$r$-partite graph containing $H.$ First, obviously $\mathcal{P}\subset
Mon\left(  K_{r}\left(  k,\ldots,k\right)  \right)  ,$ and so, $\lambda
^{\left(  \alpha\right)  }\left(  \mathcal{P}\right)  \leq\lambda^{\left(
\alpha\right)  }\left(  Mon\left(  K_{r}\left(  k,\ldots,k\right)  \right)
\right)  ;$ however, Theorem \ref{thblo} implies that
\[
\lambda^{\left(  \alpha\right)  }\left(  \mathcal{P}\right)  \leq
\lambda^{\left(  \alpha\right)  }\left(  Mon\left(  K_{r}\left(
k,\ldots,k\right)  \right)  \right)  =\lambda^{\left(  \alpha\right)  }\left(
Mon\left(  K_{r}\right)  \right)  =\frac{r-2}{r-1}.
\]
Since $T_{r-1}\left(  n\right)  ,$ the $\left(  r-1\right)  $-partite
Tur\'{a}n graph of order $n,$ belongs to $\mathcal{P}$, we see that
\[
\lambda^{\left(  \alpha\right)  }\left(  \mathcal{P}\right)  =\frac{r-2}%
{r-1},
\]
proving Theorem \ref{thCon2}.
\end{proof}

\bigskip

\section{\label{CR}Concluding remarks}

We have started above a systematic study of the parameter $\lambda^{\left(
\alpha\right)  }\left(  G\right)  $ and its connections to extremal problems
for $r$-graphs. Similarly to eigenvalues of $2$-graphs, one may consider other
critical points of $P_{G},$ for instance, for every $r$-graph $G$ of order $n$
and every real number $\alpha\geq1,$ define
\[
\lambda_{\min}^{\left(  \alpha\right)  }\left(  G\right)  =\min_{\left\Vert
\mathbf{x}\right\Vert _{\alpha}=1}P_{G}\left(  \mathbf{x}\right)
=\min_{\left\vert x_{1}\right\vert ^{\alpha}+\cdots+\left\vert x_{n}%
\right\vert ^{\alpha}=1}r!\sum_{\left\{  i_{1},i_{2},\ldots,i_{r}\right\}  \in
E\left(  G\right)  }x_{i_{1}}x_{i_{2}}\cdots x_{i_{r}}.
\]
Obviously $\lambda_{\min}^{\left(  \alpha\right)  }\left(  G\right)  $ is
analogous to the smallest eigenvalue of $2$-graphs and one can come up with a
lot of supporting material about it, including a system of equations similar
to (\ref{eequ}). In particular, if $\mathcal{P}$ is a hereditary property of
$r$-graphs, we can define
\[
\lambda_{\min}^{\left(  \alpha\right)  }\left(  \mathcal{P},n\right)
=\min_{G\in\mathcal{P}_{n}}\lambda_{\min}^{\left(  \alpha\right)  }\left(
G\right)  ,
\]
and prove the following statement:

\begin{theorem}
Let $\alpha>1.$ If $\mathcal{P}$ is a hereditary property of $r$-graphs, then
the limit
\[
\lambda_{\min}^{\left(  \alpha\right)  }\left(  \mathcal{P}\right)
=\lim_{n\rightarrow\infty}\lambda_{\min}^{\left(  \alpha\right)  }\left(
\mathcal{P},n\right)  n^{r/\alpha-r}%
\]
exists.
\end{theorem}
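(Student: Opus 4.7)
The plan is to adapt the proof of Theorem \ref{th1} to the minimum setting, where the main new ingredient is a sign reversal. Set $\lambda_{n} = \lambda_{\min}^{(\alpha)}(\mathcal{P}, n)$. Taking $\mathbf{x} = (1, 0, \ldots, 0)$ gives $P_G(\mathbf{x}) = 0$, so $\lambda_n \leq 0$ for every $n$; moreover $|P_G(\mathbf{x})| \leq P_G(|\mathbf{x}|) \leq \lambda^{(\alpha)}(G)$ on the unit $\alpha$-sphere, so $|\lambda_n| \leq \lambda^{(\alpha)}(\mathcal{P}, n)$, and Theorem \ref{th1} ensures that the normalized sequence $|\lambda_n|\, n^{r/\alpha - r}$ is bounded above.

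Choose $G \in \mathcal{P}_n$ with $\lambda_{\min}^{(\alpha)}(G) = \lambda_n$ and $\mathbf{x} = (x_1, \ldots, x_n)$ with $\|\mathbf{x}\|_\alpha = 1$ and $P_G(\mathbf{x}) = \lambda_n$; by averaging, some vertex $k$ satisfies $|x_k|^\alpha \leq 1/n$. Since $\alpha > 1$, the constraint $\sum_i |x_i|^\alpha = 1$ is $C^1$, and Lagrange multipliers yield, exactly as for (\ref{eequ}),
\[
r!\sum_{\{k,i_1,\ldots,i_{r-1}\} \in E(G)} x_{i_1}\cdots x_{i_{r-1}} = r\lambda_n |x_k|^{\alpha - 2}x_k \quad (x_k \neq 0).
\]
Multiplying by $x_k$ (trivially valid if $x_k = 0$) and letting $\mathbf{x}'$ be the restriction of $\mathbf{x}$ to $V(G) \setminus \{k\}$, one finds
\[
P_{G-k}(\mathbf{x}') = P_G(\mathbf{x}) - r!\, x_k \sum_{\{k,i_1,\ldots,i_{r-1}\} \in E(G)} x_{i_1}\cdots x_{i_{r-1}} = \lambda_n(1 - r|x_k|^\alpha).
\]
Since $\mathcal{P}$ is hereditary, $G - k \in \mathcal{P}_{n-1}$, whence
\[
P_{G-k}(\mathbf{x}') \geq \lambda_{\min}^{(\alpha)}(G - k)\,\|\mathbf{x}'\|_\alpha^{r} \geq \lambda_{n-1}(1 - |x_k|^\alpha)^{r/\alpha},
\]
using $\lambda_{n-1} \leq \lambda_{\min}^{(\alpha)}(G - k)$ and $(1 - |x_k|^\alpha)^{r/\alpha} \geq 0$.

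Setting $\tilde\lambda_n = -\lambda_n \geq 0$ and dividing by $1 - r|x_k|^\alpha > 0$ (valid for $n > r$), we obtain
\[
\tilde\lambda_n \leq \tilde\lambda_{n-1} \cdot \frac{(1 - |x_k|^\alpha)^{r/\alpha}}{1 - r|x_k|^\alpha},
\]
which is formally identical to inequality (\ref{in3}) in the proof of Theorem \ref{th1}. The remainder of that proof now applies verbatim to $\tilde\lambda_n$: the function $f(y) = (1 - y)^{r/\alpha}/(1 - ry)$ is nondecreasing on $[0, 1/n]$ for large $n$, so the sequence $\tilde\lambda_n n^{r/\alpha - 1}/(n - 1)_{r - 1}$ is nonincreasing; being nonnegative, it converges, and since $n^{r/\alpha - 1}/(n - 1)_{r - 1} \sim n^{r/\alpha - r}$, so does $\lambda_n n^{r/\alpha - r}$. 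The only subtle point is the sign handling: because $\lambda_n, \lambda_{n-1} \leq 0$, the inequalities in Theorem \ref{th1} must be negated, which is achieved cleanly by passing to $\tilde\lambda_n = -\lambda_n$. A minor side remark: Lagrange multipliers remain valid despite possible zero coordinates of $\mathbf{x}$, because $|x|^\alpha$ is $C^1$ for $\alpha > 1$ and the identity $P_{G-k}(\mathbf{x}') = \lambda_n(1 - r|x_k|^\alpha)$ continues to hold at $x_k = 0$.
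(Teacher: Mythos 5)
Your proposal is correct and is precisely the route the paper intends: this theorem is stated without proof in the concluding remarks, and your argument is the sign-reversed adaptation of the proof of Theorem \ref{th1}, with your Lagrange system playing the role of (\ref{eequ}) for signed critical vectors. The handling of signs via $|x_k|^{\alpha-2}x_k$, the identity $P_{G-k}\left(\mathbf{x}^{\prime}\right)=\lambda_{n}\left(1-r\left\vert x_{k}\right\vert^{\alpha}\right)$, and the passage to $\tilde\lambda_{n}=-\lambda_{n}$ (nonnegative, so the normalized nonincreasing sequence converges) are all sound.
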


Many obvious problems arise here, of which we mention the following
two:\medskip

\textbf{Problem 1} \emph{Let }$\alpha>1.$\emph{ For a }$2$\emph{-graph }%
$G$\emph{ of order }$n,$\emph{ study its \textquotedblleft}$\alpha
$\emph{-eigenvalues\textquotedblright, that is to say, critical values of
}$P_{G}$\emph{ over the unit sphere }$\left\vert x_{1}\right\vert ^{\alpha
}+\cdots+\left\vert x_{n}\right\vert ^{\alpha}=1.\medskip$

Since the smallest eigenvalue of $2$-graphs has proved to be a useful
structural parameter (see, e.g. \cite{Nik07}), one can investigate what role
plays $\lambda_{\min}^{\left(  \alpha\right)  }\left(  G\right)  $ for
$r$-graphs.\medskip

\textbf{Problem 2} \emph{Let }$\alpha>1.$\emph{ For an }$r$\emph{-graph }%
$G$\emph{ of order }$n,$\emph{ study which structural properties of }$G$
\emph{are} \emph{related to }$\lambda_{\min}^{\left(  \alpha\right)  }\left(
G\right)  .\medskip$

Although for $\alpha\neq1$ and $\alpha\neq r$ the parameter $\lambda^{\left(
\alpha\right)  }\left(  G\right)  $ is mostly auxiliary, it is challenging and
instructive to extend known results about $\lambda^{\left(  r\right)  }\left(
G\right)  $ to general $\alpha\geq1$. This is interesting even for $r=2.$
Keevash, Lenz and Mubayi have pointed to such generalization in their
Corollary 2, but a lot more work is pending. An important initial endeavour
would be to recover parts of the Perron-Frobenius theory for $\lambda^{\left(
\alpha\right)  }\left(  G\right)  .$ In particular, what is the set of all
critical vectors corresponding to $\lambda^{\left(  \alpha\right)  }\left(
G\right)  $.\medskip

\textbf{Problem 3 }\emph{Given an }$r$\emph{-graph }$G$\emph{ of order }%
$n,$\emph{ determine the set of all }$n$\emph{-vectors }$x$\emph{ with
}$\left\Vert \mathbf{x}\right\Vert _{\alpha}=1$\emph{ and} $\lambda^{\left(
\alpha\right)  }\left(  G\right)  =P_{G}\left(  \mathbf{x}\right)  .\medskip$

In view of the importance of flat properties, the following problem is
natural, although probably quite difficult:\medskip

\textbf{Problem 4 }\emph{Characterize all flat properties of }$r$%
\emph{-graphs.}\medskip

A particular case of this problem arises in connection to Theorem \ref{th9}.
It is curious how rich can be properties of graphs defined by forbidden
induced blowups of covering graphs.  \medskip

\textbf{Problem 5 }\emph{For which non-covering graphs F, the property
Her(\{F\}) can be represented as Her(X) for some family X of blowups of
covering graphs:?}\bigskip

\textbf{Acknowledgement}\ Much of the material presented above was conceived
in the Spring of 2012, when the author read a course on spectra of hypergraphs
at the Math. Department of the University of Memphis. The author is grateful
to Peter Keevash, John Lenz and Dhruv Mubayi for sharing their manuscript
\cite{KLM13}, which greatly motivated the writing of the present paper.

Also, the author is much indebted to Alex Sidorenko for useful discussions.
His comments on an earlier version of the paper led to considerable
improvement of the presentation.\bigskip


\begin{thebibliography}{99}                                                                                               %


\bibitem {BrSi84}W. G. Brown, M. Simonovits, Digraph extremal problems,
hypergraph extremal problems, and the densities of graph structures,
\emph{Disc. Math}. \textbf{48} (1984), 147-162

\bibitem {CPZ08}K. C. Chang, K. Pearson and T. Zhang, Perron-Frobenius theorem
for nonnegative hypermatrices, \emph{Commun. Math. Sci.}\textbf{ 6 }(2008), 507-520.

\bibitem {CPZ09}K.C. Chang, K. Pearson and T. Zhang, On eigenvalue problems of
real symmetric hypermatrices, \emph{J. Math. Anal. Appl.} \textbf{350} (2009) 416--422.

\bibitem {CoDu11}J. Cooper and A Dutle, Spectra of uniform hypergraphs,
.\emph{Linear Algebra Appl. }\textbf{436} (2012), 3268-3292.

\bibitem {Cve72}D. Cvetkovi\'{c}, Chromatic number and the spectrum of a
graph, \emph{Publ. Inst. Math. (Beograd)} \textbf{14(28) }(1972), 25-38, .

\bibitem {Erd64}P. Erd\H{o}s, Extremal problems of graphs and generalized
graphs, \emph{Israel J. Math.} \textbf{2 }(1964),183-190.

\bibitem {FrRo84}P. Frankl and V. R\"{o}dl, Hypergraphs do not jump,
\emph{Combinatorica} \textbf{4} (1984), 149--159.

\bibitem {FrFu88}P. Frankl and Z. F\"{u}redi, Extremal problems and the
Lagrange function of hypergraphs, \emph{Bulletin Institute Math. Academia
Sinica} \textbf{16} (1988), 305--313.

\bibitem {FGH11}S. Friedland, S. Gaubert and L. Han, Perron-Frobenius theorem
for nonnegative multilinear forms,\emph{ Linear Algebra Appl. }\textbf{438}
(2013), 738-749.

\bibitem {FuSi05}Z. F\"{u}redi and M. Simonovits, Triple systems not
containing a Fano configuration, \emph{Combin. Probab. Comput.} \textbf{14}
(2005), 467-484.

\bibitem {Gow07}W.T. Gowers, Hypergraph regularity and the multidimensional
Szemer\'{e}di theorem, \emph{Ann. of Math.}\textbf{166} (2007), 897-946.

\bibitem {HLP88}G.H. Hardy, J.E. Littlewood, and G. P\'{o}lya,
\emph{Inequalities, 2nd edition}, Cambridge University Press, 1988, vi+324 pp.

\bibitem {KNS64}G. Katona, T. Nemetz and M. Simonovits, On a problem of
Tur\'{a}n in the theory of graphs, \emph{Mat. Lapok }\textbf{15} (1964), 228--238.

\bibitem {KeSu05}P. Keevash and B. Sudakov, The Tur\'{a}n number of the Fano
plane, \emph{Combinatorica} \textbf{25} (2005), 561-574.

\bibitem {Kee11}P. Keevash, Hypergraph Tur\'{a}n problems, \emph{Surveys in
Combinatorics,} Cambridge University Press, 2011, 83--140.

\bibitem {KLM13}P. Keevash, J. Lenz, and D. Mubayi, Spectral extremal problems
for hypergraphs, \emph{preprint available at arXiv:1304.0050.}

\bibitem {Kha77}N. Khadzhiivanov, Inequalities for graphs (in Russian),
\emph{C. R. Acad. Sci. Bul.} \textbf{30 }(1977), 793-796.

\bibitem {Kra55}M.A. Krasnoselskii, Stability of critical values of even
functionals on the sphere (in Russian), \emph{Matem. Sb.}, \textbf{37}:2
(1955), 301--322.

\bibitem {Lim05}L.H. Lim, Singular values and eigenvalues of hypermatrices: a
variational approach, \emph{in Proceedings of the IEEE International Workshop
on Computational Advances in Multi-Sensor Adaptive Processing (CAMSAP '05)}
\textbf{1 }(2005), pp. 129--132.

\bibitem {LuSh30}L. Lusternik and L. Schnirelman, Topological methods in
variational problems (in Russian), \emph{ Inst. Mat. Mech., Moscow State
Univ.}, 1930.

\bibitem {LuSh34}L. Lusternik and L. Schnirelman, M\'{e}thodes topologiques
dans les probl\`{e}mes variationnels, Paris, Hermann, 1934.

\bibitem {MoSt65}T. Motzkin and E. Straus, Maxima for graphs and a new proof
of a theorem of Tur\'{a}n, \emph{Canad. J. Math.,} \textbf{17 }(1965), 533-540.

\bibitem {NRS06}B. Nagle, V. Rodl and M. Schacht, The counting lemma for
regular k-uniform hypergraphs, \emph{Random Structures Algorithms} \textbf{28}
(2006), 113-179.

\bibitem {Nik02}V. Nikiforov, Some inequalities for the largest eigenvalue of
a graph. \emph{Combin. Probab. Comput. }\textbf{11} (2002), 179--189.

\bibitem {Nik07}V. Nikiforov, Bounds on graph eigenvalues II, \emph{Linear
Algebra Appl.} \textbf{427} (2007), 183-189.

\bibitem {Nik09}V. Nikiforov, A spectral condition for odd cycles,
\emph{Linear Algebra Appl.} \textbf{428} (2008), 1492-1498.

\bibitem {Nik11}V. Nikiforov, Some new results in extremal graph theory,
\emph{Surveys in Combinatorics,} Cambridge University Press, 2011, 141--181.

\bibitem {Nik11a}V. Nikiforov, The number of cliques in graphs of given order
and size. \emph{Transactions AMS}, \textbf{363} (2011), 1599-1618.

\bibitem {NikA}V. Nikiforov, An extension of Maclauren's inequalities,
preprint available at\emph{ ArXiv math/0608199.}

\bibitem {NikB}V. Nikiforov, The largest eigenvalue of uniform hypergraphs:
basic properties, in preparation.

\bibitem {NikC}V. Nikiforov, Some extremal problems for hereditary propeties
of graphs, preprint available at \emph{arXiv:1305.1072v1.}

\bibitem {Qi05}L. Qi, Eigenvalues of a real supersymmetric hypermatrix,\emph{
J. Symbolic Comput.} \textbf{40} (2005) 1302--1324.

\bibitem {Qi06}L. Qi, Rank and eigenvalues of a supersymmetric hypermatrix, a
multivariate homogeneous polynomial and an algebraic surface defined by them,
\emph{J. Symbolic Comput.} \textbf{41} (2006) 1309--1327.

\bibitem {Qi07}L. Qi, Eigenvalues and invariants of hypermatrices, \emph{J.
Math. Anal. Appl. }\textbf{325 }(2007) 1363--1377.

\bibitem {RoSk04}V. Rodl and J. Skokan, Regularity lemma for uniform
hypergraphs, \emph{Random Structures Algorithms} \textbf{25} (2004), 1-42.

\bibitem {RoSc12}V. Rodl and M.Schacht, Complete partite subgraphs in dense
hypergraphs, \emph{Random Structures Algorithms, }\textbf{41 }(2012), 557-573.

\bibitem {Sid87}A.F Sidorenko, On the maximal number of edges in a uniform
hypergraph with no forbidden subgraphs (in Russian), \emph{Mat. Zametki,}
\textbf{41 }(1987), 433-455; (English translation in \emph{Math Notes}
\textbf{41} (1987), 247--259.)

\bibitem {SoSt82}V. S\'{o}s, E. Straus, Extremals of functions on graphs with
applications to graphs and hypergraphs, \emph{J. Combin. Theory Ser. B
}\textbf{32 }(1982), 246-257.

\bibitem {Wil86}H.S. Wilf, Spectral bounds for the clique and independence
numbers of graphs, \emph{J. Combin. Theory Ser. B} \textbf{40} (1986), 113-117.

\bibitem {YaYa10}Y. Yang and Q. Yang, Further results for Perron-Frobenius
Theorem for nonnegative tensors, \emph{SIAM Journal on Matrix Analysis},
\textbf{31} (2010), 2517-2530.

\bibitem {YaYa11}Q. Yang and Y. Yang, Further results for Perron-Frobenius
Theorem for nonnegative tensors II, \emph{SIAM Journal on Matrix Analysis},
\textbf{32} (2011), 1236-1250.

\bibitem {YaYa13}Y. Yang and Q. Yang, Geometric simplicity of the spectral
radius of nonnegative irreducible tensors, \emph{Frontiers of Mathematics in
China} \textbf{8} (2013), 129-140.
\end{thebibliography}
\end{document}